\newcolumntype{d}[1]{D{.}{.}{#1}}
\renewcommand{\overline}{\bar}
\newcommand{\underset}[2]{\displaystyle \mathop{#2}_{#1}}
\newcommand{\implies}{\Longrightarrow}
\newcommand{\WIS}{\operatorname{WIS}}
\newcommand{\AFF}{\operatorname{AFF}}
\newcommand{\R}{\mathbb{R}}
\newcommand{\D}{\mathbb{D}}
\newcommand{\E}{\mathbb{E}}
\newcommand{\N}{\mathbb{N}}
\newcommand{\Px}{\mathbb{P}}
\newcommand{\DM}{D^{\mathcal{M}}}
\newcommand{\DS}{D^{\mathcal{S}}}
\newcommand{\xcn}{\hat{X}^N}
\newtheorem{theorem}{Theorem}
\newtheorem{proposition}[theorem]{Proposition}
\newtheorem{corollary}[theorem]{Corollary}
\newtheorem{lemma}[theorem]{Lemma}
\newcommand{\symm}{{\mathcal{S}_d(\mathbb R)}}
\newcommand{\nsing}{{\mathcal{G}_d(\mathbb R)}}
\newcommand{\genm}{{\mathcal{M}_d(\mathbb R)}}
\newcommand{\dpos}{{\mathcal{S}_d^{+,*}(\mathbb R)}}
\newcommand{\posm}{{\mathcal{S}_d^+(\mathbb R)}}
\newcommand{\Cpol}{\mathcal{C}^\infty_{\mathrm{pol}}(\D) }
\newcommand{\Cpolde}[1]{\mathcal{C}^\infty_{\mathrm{pol}}(#1) }
\newcommand{\Tr}{\operatorname{Tr}} 
\newcommand{\Rg}{\operatorname{Rk}} 
\newcommand{\adj}{\operatorname{adj}} 
\begin{document}
\begin{frontmatter}

\title{Exact and high-order discretization schemes for Wishart
processes and their affine extensions\thanksref{T1}}
\runtitle{Simulation schemes for Wishart processes}

\thankstext{T1}{Supported by the Credinext project from Finance
Innovation, of the
Eurostars E!5144-TFM project, and of the ``Chaire Risques Financiers''
of Fondation du Risque.}

\begin{aug}
\author[A]{\fnms{Abdelkoddousse} \snm{Ahdida}\ead[label=e1]{ahdidaa@cermics.enpc.fr}}
\and
\author[A]{\fnms{Aur\'elien} \snm{Alfonsi}\corref{}\ead[label=e2]{alfonsi@cermics.enpc.fr}}
\runauthor{A. Ahdida and A. Alfonsi}
\affiliation{Universit\'e Paris-Est}
\address[A]{CERMICS\\
Universit\'e Paris-Est\\
Project team MathFi ENPC-INRIA-UMLV\\
Ecole des Ponts\\
6-8 avenue Blaise Pascal\\
77455 Marne La Vall\'ee\\
France\\
\printead{e1}\\
\hphantom{E-mail: }\printead*{e2}} 
\end{aug}

\received{\smonth{4} \syear{2011}}
\revised{\smonth{3} \syear{2012}}

%
\begin{abstract}
This work deals with the simulation of Wishart processes and affine
diffusions on positive semidefinite matrices. To do so, we focus on the
splitting of the infinitesimal generator in order to use composition
techniques as did Ninomiya and Victoir [\textit{Appl. Math. Finance}
\textbf{15} (2008) 107--121] or Alfonsi [\textit{Math. Comp.}
\textbf{79} (2010) 209--237]. Doing so, we have found a remarkable
splitting for Wishart processes that enables us to sample exactly
Wishart distributions without any restriction on the parameters. It is
related but extends existing exact simulation methods based on
Bartlett's decomposition. Moreover, we can construct high-order
discretization schemes for Wishart processes and second-order schemes
for general affine diffusions. These schemes are, in practice, faster
than the exact simulation to sample entire paths. Numerical results on
their convergence are given.
\end{abstract}

%
\begin{keyword}[class=AMS]
\kwd{65C30}
\kwd{60H35}
\kwd{91B70}.
\end{keyword}
\begin{keyword}
\kwd{Wishart processes}
\kwd{affine processes}
\kwd{exact simulation}
\kwd{discretization schemes}
\kwd{weak error}
\kwd{Bartlett's decomposition}.
\end{keyword}

\end{frontmatter}

\section*{Introduction}\label{Chapter1}
This paper focuses on simulation methods for Wishart processes
and more generally for affine diffusions on positive semidefinite
matrices. Before explaining our motivations and our main results, we
start with a short introduction to these processes. Even though we use
rather standard notation for matrices, they are recalled at the end of
the \hyperref[Chapter1]{Introduction}, and we invite the reader to first
give a quick look at it. Wishart processes have been initially
introduced by Bru~\cite{Bruthesis,Bru}. They are also named because
their marginal laws follow Wishart distributions. Very recently,
Cuchiero et al.~\cite{Teichmann} have introduced a general framework
for affine processes on positive semidefinite matrices $\posm$ that
embeds Wishart processes and includes possible jumps. In this paper, we
only consider continuous processes of this kind. Such processes solve
the following SDE:
%
\begin{equation}\label{EDSAFFINE}
X_t^x =x + \int_{0}^t \bigl( \overline{\alpha} + B(X_s^x)
\bigr)\,ds +
\int_{0}^t \bigl( \sqrt{X_s^x}\,dW_sa + a^T\,dW_s^T\sqrt{X_s^x}\bigr).
\end{equation}
Here, and throughout the paper, $(W_t,t \ge0)$ denotes a $d$-by-$d$
square matrix made of independent
standard Brownian motions and
%
\begin{equation}\label{paramaffine}
x, \bar{\alpha} \in\posm,\qquad a \in\genm
\quad\mbox{and}\quad B\in\mathcal{L}(\symm)
\end{equation}
is a linear mapping on $\symm$. Wishart processes
correspond to the case where
%
\begin{eqnarray}\label{paramwishart}
&&\exists\alpha\ge0, \qquad\bar
{\alpha}=\alpha a^T a \quad\mbox{and}\nonumber\\[-8pt]\\[-8pt]
&&\exists b \in
\genm, \forall x \in\symm\qquad B(x)=bx+xb^T.\nonumber
\end{eqnarray}
When $d=1$, (\ref{EDSAFFINE}) is simply
the SDE of the Cox--Ingersoll--Ross (CIR) process that has been broadly
studied, and
we will implicitly assume that $d\ge2$ throughout the paper. Weak and
strong uniqueness of
SDE (\ref{EDSAFFINE}) has been studied by Bru~\cite{Bru}, Cuchiero et
al.~\cite{Teichmann} and Mayerhofer, Pfaffel and Stelzer \cite
{STELZER}. Here we sum up their results.
%
\begin{theorem}
If $x \in\posm$, $\bar{\alpha}-(d-1)a^T a \in\posm$ and $B$
satisfies the following condition:
%
\begin{equation}\label{ReQuiredAssumption}
\forall x_1, x_2 \in\posm\qquad \Tr(x_1x_2)=0 \quad\implies\quad
\Tr(B(x_1)x_2) \geq0,
\end{equation}
there is a unique weak solution to the SDE (\ref{EDSAFFINE}) in $\posm
$. We denote
by $\AFF_d(x,\overline{\alpha},B,a)$ the law of $(X^x_t)_{t \geq0}$ and
$\AFF_d(x,\overline{\alpha},B,a;t)$ the marginal law of $X_t^x$. If we assume,
moreover, that $\bar{\alpha}-(d+1)a^T a \in\posm$ and $x\in\dpos
$, there is a unique strong solution to the
SDE (\ref{EDSAFFINE}).

Under the parametrization of Wishart processes (\ref{paramwishart}),
condition (\ref{ReQuiredAssumption}) is satisfied and weak uniqueness
holds as
soon as $\alpha\ge d-1$. In that case, we denote by $\WIS_d(x,\alpha
,b,a)$ the law of the Wishart process $(X_t^x)_{t\geq0}$ and $
\WIS_d(x,\alpha,b,\break a;t)$ the law of $X_t^x$.
\end{theorem}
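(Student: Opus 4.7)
The plan is to assemble the statement from the three cited sources rather than reprove everything from scratch. There are three distinct claims: weak well-posedness under the admissibility conditions, strong well-posedness under the reinforced assumptions $\bar{\alpha}-(d+1)a^T a \in \posm$ and $x\in\dpos$, and the verification that the Wishart parametrization~\eqref{param_wishart} falls into the first regime whenever $\alpha\ge d-1$.

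For weak existence and uniqueness, I would follow the affine-process machinery of Cuchiero et al.~\cite{Teichmann}. The infinitesimal generator attached to~\eqref{EDSAFFINE} is an affine operator on $\posm$, and the conditions $\bar{\alpha}-(d-1)a^T a\in\posm$ and~\eqref{ReQuiredAssumption} are precisely the admissibility conditions that make it the generator of a Feller semigroup on~$\posm$. The first condition controls the amount of noise along the boundary so that the cone is invariant (the matrix analogue of $2\kappa\theta\ge\sigma^2$ at the boundary of $\mathbb{R}_+$ for a CIR process), and the second is the inward-pointing condition on the drift along the stratified boundary of $\posm$. Existence is obtained by solving the matrix Riccati ODE for the candidate Laplace transform $\E[\exp(-\Tr(u X_t^x))]=\exp(-\phi(t,u)-\Tr(\psi(t,u)x))$ and checking that it defines a stochastically continuous Markov semigroup whose associated martingale problem is~\eqref{EDSAFFINE}; uniqueness of the law is then immediate from the affine transform formula, since any weak solution shares this Laplace transform and hence its finite-dimensional distributions.

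For the strong part, I would invoke Mayerhofer et al.~\cite{STELZER}: under $\bar{\alpha}-(d+1)a^T a\in\posm$ and $x\in\dpos$, one shows that $X^x$ never touches $\partial\posm$, which is the exact matrix analogue of the Feller non-attainment condition for CIR. Since $y\mapsto\sqrt{y}$ is $C^\infty$ on $\dpos$, the diffusion coefficient in~\eqref{EDSAFFINE} is locally Lipschitz there, so pathwise uniqueness holds up to each exit time of compact subsets of $\dpos$, and boundary non-attainment upgrades this to global pathwise uniqueness; Yamada--Watanabe combined with the weak existence above then yields a strong solution. The Wishart specialization only requires checking~\eqref{ReQuiredAssumption} for $B(x)=bx+xb^T$: if $x_1,x_2\in\posm$ satisfy $\Tr(x_1x_2)=\|x_1^{1/2}x_2^{1/2}\|_F^2=0$, then $x_1x_2=x_2x_1=0$, hence $\Tr(B(x_1)x_2)=\Tr(bx_1x_2)+\Tr(x_1b^Tx_2)=0\ge 0$, and admissibility reduces to $(\alpha-(d-1))a^T a\in\posm$, i.e.\ $\alpha\ge d-1$. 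I expect the main obstacle to be the weak existence step in the degenerate regime where $\bar{\alpha}-(d-1)a^T a$ is only semidefinite: there, standard Lipschitz or Yamada approximation arguments break down at the boundary and one genuinely needs the affine transform formula of~\cite{Teichmann}, which extends Bru's original Wishart construction~\cite{Bru} to the full affine class considered here.
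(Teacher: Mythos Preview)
Your proposal is correct and, in fact, more detailed than what the paper itself provides: the paper does not prove this theorem at all but simply states it as a summary of results from Bru~\cite{Bru}, Cuchiero et al.~\cite{Teichmann}, and Mayerhofer et al.~\cite{STELZER}, which is precisely the assembly you outline. Your explicit verification of condition~\eqref{ReQuiredAssumption} for the Wishart drift $B(x)=bx+xb^T$ is a nice addition that the paper leaves to the reader.
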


Throughout the paper, when we use the notation $\AFF_d(x,\overline
{\alpha},B,a)$
or $\AFF_d(x$, $\overline{\alpha},B,a;t)$ [resp., $\WIS_d(x,\alpha,b,a)$
or $
\WIS_d(x,\alpha,b,a;t)$], we implicitly assume that $\bar{\alpha
}-(d-1)a^T a
\in\posm$ (resp., $\alpha\ge d-1$) and $B$
satisfies (\ref{ReQuiredAssumption}) so that weak
uniqueness holds.

In her Ph.D. thesis~\cite{Bruthesis}, Bru introduced Wishart
processes and used them in biology to study perturbed experimental
data. Recently, great attention has been paid to Wishart processes
for applications in finance. Namely, Gourieroux and Sufana~\cite{Gourieroux} and Da Fonseca, Grasselli and Tebaldi
\cite{Dafonseca} have suggested the use of these processes to model the
instantaneous covariance matrix of $d$ assets. It naturally extends
stochastic volatility models for only one asset like the Heston
model~\cite{Heston}. Obviously, processes on positive semidefinite
matrices are really interesting to model the evolution of a dependence
structure because they can describe a covariance matrix. However, when
dealing with applications, it is, in general, crucial to be able to
sample paths of such processes and make Monte Carlo algorithms.

To the best of our knowledge, there is minimal literature on simulation methods
for Wishart and general affine processes (\ref{EDSAFFINE}). Wishart
distributions have been intensively
studied in statistics when $\alpha\in\N$. In this case, exact simulation
methods have been proposed by Odell and Feiveson~\cite{Odell}, Smith and
Hocking~\cite{Hocking} and Gleser~\cite{Gleser}, to mention a few. Concerning
discretization schemes, the usual Euler--Maruyama scheme is not well defined
because of the square-root. This already happens for the CIR process
($d=1$). One has then to find specific schemes. Recently,
Benabid et al.~\cite{Harry} and Gauthier and Possamai~\cite{GP} have
proposed numerical approximations for Wishart processes that are
well defined under some
restrictions on the parameters. However, there is no result on the
accuracy of
their methods. Currently,
Teichmann~\cite{semBachelier} is working on dedicated schemes for general
affine processes by approximating their characteristic functions. Our study
here is only dedicated to the diffusion (\ref{EDSAFFINE}).

Initially, our goal was to find high-order discretization schemes for Wishart
processes by splitting operators and using scheme compositions. Indeed, this
approach has already proved to be very efficient for other affine
diffusions (see~\cite{Alfonsi}). The main difficulty here was to find a
splitting that involves infinitesimal generators of diffusions that are
well defined on $\posm$ and that can be simulated. Doing so, we
incidentally have found a remarkable
splitting for some canonical Wishart processes: the infinitesimal
generator of
$\WIS_d(x,\alpha,0,I^n_d)$ is the sum of commuting operators that are
associated to elementary SDEs that can be sampled exactly. With the
help of
a simple but useful law identity, this enables us to sample
exactly Wishart processes for any admissible parameter.
In particular, our result extends the Bartlett's
decomposition that is commonly used to sample central Wishart
distributions. This splitting is not only interesting for the exact
simulation method. It is also useful to construct high-order
discretization schemes for Wishart processes that are, in practice,
faster to
generate full paths. In fact, it allows us to
get a high-order scheme that preserves the domain $\posm$. We provide a
rigorous analysis of the weak error in this framework. Still, by using the
splitting technique, we also get a second-order scheme for any affine diffusion (\ref{EDSAFFINE}) without any
restriction on the parameters.

This paper is structured as follows. First, we present some general
results on
affine diffusions. We calculate their infinitesimal generator and obtain
interesting identities in law that are intensively used next for the
different simulation methods. Section~\ref{sec_exact_wish} is devoted
to the
exact simulation of Wishart processes. It exhibits\vadjust{\goodbreak} the remarkable
splitting of
the infinitesimal generator and
shows how it can be used to sample exactly any Wishart
distribution. Section~\ref{sec_high} deals with high-order schemes for affine
diffusions. Thanks to the remarkable splitting, we are able to
construct a
third-order scheme for Wishart processes and second-order schemes for affine
diffusions. Last, we give numerical illustrations of our convergence results
in Section~\ref{SecSimul}. We compare the time required by each
method and
also give a possible application of our results in finance.

\subsection*{Notation for real matrices}\label{Notationsmatrices}
\begin{itemize}
\item For $d,d'\in\N^*$, $\mathcal{M}_{d}(\mathbb R)$ denotes the
real $d$
square matrices and $\mathcal{M}_{d\times d'}(\mathbb R)$ the real matrices
with $d$ rows and $d'$ columns.
\item$\mathcal{S}_{d}(\mathbb R)$,
$\mathcal{S}_d^+(\mathbb R), \mathcal{S}_d^{+,*}(\mathbb R) $ and
$\mathcal{G}_d(\mathbb R)$ denote, respectively, the set of
symmetric, symmetric positive semidefinite, symmetric positive definite and
nonsingular matrices.
\item For $x \in\mathcal{M}_{d}(\mathbb R)$,
$x^T$, $\adj(x)$, $\det(x)$, $\Tr(x)$ and $\Rg(x)$ are,
respectively, the
transpose, the adjugate, the determinant, the trace and the rank of $x$.
\item For $x\in\mathcal{S}_d^+(\mathbb R)$, $\sqrt{x}$ denotes the unique
symmetric positive semidefinite matrix such that $(\sqrt{x})^2=x$.
\item The identity matrix is denoted by $I_d$ and we set for $n\le d$,
$I^n_d=(\mathbh{1}_{i=j\le n})_{1\le i,j \le d}$ and
$e^n_d=(\mathbh{1}_{i=j= n})_{1\le i,j \le d}$, so that $I^n_d=\sum_{i=1}^n
e^i_d$. We also set for $1\le i,j\le d$,
$e^{i,j}_d=(\mathbh{1}_{k=i,l=j})_{1\le k,l \le d}$.
\item For $x \in\symm$, we denote by $x_{\{i,j\}}$ the value of $x_{i,j}$,
so that
\[
x=\sum_{1 \le i\le j\le d} x_{\{i,j\}} (e^{i,j}_d+
\mathbh{1}_{i\not= j} e^{j,i}_d) .
\]
We use both notation in the paper:
notation $(x_{i,j})_{1 \le i,j \le d}$ is more convenient for matrix
calculations while $(x_{\{i,j\}})_{1 \le i \le j \le d}$ is
preferred to
emphasize that we work on symmetric matrices.
\item For $\lambda_1,\ldots,\lambda_d \in\R$,
$\operatorname{diag}(\lambda_1,\ldots,\lambda_d) $ denotes the diagonal matrix such
that $\operatorname{diag}(\lambda_1,\ldots,\lambda_d)_{i,i}=\lambda_i$.
\end{itemize}

\section{Some properties of affine processes on
positive semidefinite matrices}\label{chapter2}

\subsection{The infinitesimal generator on $\genm$ and $\symm$}

We start with a simple lemma. It is useful to calculate the
infinitesimal generator of processes on matrices.
%
\begin{lemma}\label{Invariantvol}
Let $(\mathcal{F}_t)_{t \ge0}$ denote the filtration generated by
$(W_t,t\ge0)$. We consider continuous
$(\mathcal{F}_t)$-adapted processes $(A_t)_{t \geq0}$, $(B_t)_{t\ge
0}$ and $(C_t)_{t \geq0}$,
respectively, valued in $\mathcal{M}_d(\mathbb R)$, $\mathcal
{M}_d(\mathbb
R)$ and $\mathcal{S}_d(\mathbb R)$, and a process $(Y_t)_{t \geq0}$
that admits the following
semimartingale decomposition:
%
\begin{equation}\label{semimg}
dY_t = C_t \,dt + B_t \,dW_t A_t + A_t^T \,dW_t^T B_t^T.
\end{equation}
Then, for $i,j,m,n\in\{1,\ldots,d\}$, the quadratic
covariation of
$(Y_t)_{i,j}$ and $(Y_t)_{m,n}$ is
%
\begin{eqnarray}
\label{Quadr}
&& d\langle(Y_t)_{i,j},(Y_t)_{m,n}\rangle\nonumber\\
&&\qquad =[
(B_tB_t^T)_{i,m}(A_t^TA_t)_{j,n}+(B_tB_t^T)_{i,n}(A_t^TA_t)_{j,m}\\
&&\qquad\quad\hspace*{2pt}{}
+(B_tB_t^T)_{j,m}(A_t^TA_t)_{i,n}+(B_tB_t^T)_{j,n}(A_t^TA_t)_{i,m}
]\,dt.\nonumber
\end{eqnarray}
\end{lemma}

It is worth noticing that the quadratic covariation given by (\ref{semimg})
depends on $A_t$ and $B_t$ only through the matrices $A_t^TA_t$ and
$B_tB_t^T$. Lemma~\ref{Invariantvol} enables us to easily calculate the
infinitesimal generator for the affine process (\ref{EDSAFFINE}) which is
defined by
\begin{eqnarray}
x\in\posm,\qquad L^{\mathcal{M}}f(x)=\lim_{t\rightarrow0^+}
\frac{\E[f(X^x_t)]-f(x)}{t} \qquad\qquad\nonumber\\
&&\eqntext{\mbox{for } f \in\mathcal{C}^2(\genm
,\R) \mbox{ with
bounded derivatives}.}
\end{eqnarray}
In fact, we get that the generator of $\AFF_d(x,\overline{\alpha
},B,a)$ is
given by
%
\begin{eqnarray}\label{InifitesimalGenerator}
L^{\mathcal{M}} &=& \Tr([\overline{\alpha} + B(x)]\DM)\nonumber\\
&&{}+ \tfrac
{1}{2}\{ 2\Tr(x\DM a^Ta \DM)+
\Tr(x({\DM})^Ta^Ta \DM)\\
&&\hspace*{113.5pt}{} + \Tr(x \DM a^Ta({\DM})^T)\},\nonumber
\end{eqnarray}
where $\DM=(\partial_{i,j})_{1 \le i,j\le d}$. Since we know that
the affine process $(X^x_t)_{t \geq0}$ takes values in $\posm\subset
\symm$, we can also look at the infinitesimal generator of this
diffusion on $\symm$, which is defined by
\begin{eqnarray}
x\in\posm,\qquad
L^{\mathcal{S}}f(x)=\lim_{t\rightarrow0^+} \frac{\E
[f(X^x_t)]-f(x)}{t} \qquad\nonumber\\
&&\eqntext{\mbox{for } f \in\mathcal{C}^2(\symm,\R)
\mbox{ with
bounded derivatives}.}
\end{eqnarray}
For $x \in\symm$, we denote by $x_{\{i,j\}}=x_{i,j}=x_{j,i}$ the value
of the coordinates $(i,j)$ and $(j,i)$, so that $x=\sum_{1\le i \le j
\le d} x_{\{i,j\}} (e^{i,j}_d+ \mathbh{1}_{i \not= j}e^{j,i}_d )$.
For\vspace*{1pt}
$f \in \mathcal{C}^2(\symm,\R)$, we then denote by
$\partial_{\{i,j\}}f$ its derivative with respect to $x_{\{i,j\}}$.
For $x \in \genm$, we set $\pi(x)=(x+x^T)/2$. It is such
that $\pi(x)=x$ for $x \in \symm$, and we have
\[
L^{\mathcal{S}}f(x)= L^{\mathcal{M}}f\circ\pi(x).
\]
By the chain rule, we have for $x\in\symm$,
$\partial_{i,j}f \circ\pi(x)= (\mathbh{1}_{i=j}+\frac{1}{2}\mathbh
1_{i\neq j})\partial_{\{i,j\}}f (x)$ and get from (\ref
{InifitesimalGenerator}) the following result.
%
\begin{proposition}\label{infiniGenS} The infinitesimal generator on
$\symm$ associated to
$\AFF_d(x$, $\overline{\alpha},B,a)$ is given by
%
\begin{equation}\label{GeneratorS}
L^{\mathcal{S}}= \Tr\bigl([\overline{\alpha}+ B(x)]D^{\mathcal{S}}\bigr)+
2\Tr(xD^{\mathcal{S}}a^TaD^{\mathcal{S}}),
\end{equation}
where $\DS$ is defined by $\DS_{i,j}= (\mathbh{1}_{i=j}+\frac
{1}{2}\mathbh
1_{i\neq j})\partial_{\{i,j\}}$, for $1\le i,j \le d$.
\end{proposition}

Of course, the generators $L^{\mathcal{M}}$ and $L^{\mathcal{S}}$ are
equivalent; one can be deduced from the other. However, $L^{\mathcal{S}}$
already embeds the fact that the process lies in $\symm$, which
reduces the
dimension from $d^2$ to $d(d+1)/2$ and gives, in practice, shorter
formulas. This is why we will mostly work in the sequel with infinitesimal
generators on $\symm$. Unless it is necessary to make the distinction with
$L^{\mathcal{M}}$, we will simply denote $L=L^{\mathcal{S}}$.

\subsection{The characteristic function of Wishart processes}

As for other affine processes, the characteristic function of affine processes
on positive semidefinite matrices can be obtained by solving two ODEs.
In the case
of Wishart processes, it is possible to solve explicitly these ODEs by solving
a matrix Riccati equation (see Levin~\cite{Levin}). Here, we give the closed
formula for the Laplace transform and a precise description of its set
of convergence.
%
\begin{proposition}\label{LaplaceGeneral}
Let $X^x_t\sim \WIS_d(x,\alpha,b,a;t)$, $q_t = \int_0^t
\exp(sb)a^Ta\exp(s\* b^T)\,ds$ and $m_t = \exp(tb)$. We introduce the set of
convergence of the Laplace transform of $X^x_t$,
$ \mathcal{D}_{b,a;t}=\{ v \in\symm, \E[\exp( \Tr(vX_t^x))]<
\infty\}$. This
is a convex open set that is given explicitly by
%
\begin{equation}\label{DomLapWis}\mathcal{D}_{b,a;t}=\{v \in\symm
, \forall s \in[0, t], I_d-2q_s v \in\nsing\}.
\end{equation}
Besides, the Laplace
transform of $X^x_t$ is well defined for $v=v_R+iv_I$ with $v_R \in
{\mathcal{D}}_{b,a;t},v_I \in\symm$
and is given by
%
\begin{equation}\label{CarWishart}
\mathbb E[\exp( \Tr(vX_t^x))] = \frac{\exp(\Tr
[v(I_d-2q_tv)^{-1}m_txm_t^T])}{\det(I_d-2q_tv)^{{\alpha}/{2}}}.
\end{equation}
\end{proposition}

The characteristic function corresponds to the case $v_R=0$ that clearly
belongs to $\mathcal{D}_{b,a;t}$. The proof of this result is given
in Appendix~\ref{proofLaplaceGeneral}. The formula (\ref
{CarWishart}) is
well known in the literature, and our contribution is to characterize precisely
the set of convergence. In particular, let us observe that $\rho I_d
\in
\mathcal{D}_{b,a;t}$ when $\rho>0$ is small enough, which will help
us to
study the Cauchy problem (Proposition~\ref{Flow}).

Last, let us remark here that
for $\tilde{X}^x_t\sim \WIS_d(x,\alpha,0,I^n_d;t)$, the formula above
becomes even
simpler and we have for $v=v_R+iv_I$ such that $v_R\in{\mathcal
{D}}_{b,a;t}, v_I \in\symm$,
%
\begin{equation}\label{CarWishartcan}
\mathbb E[\exp( \Tr(v\tilde{X}^x_t))] = \frac{\exp(\Tr
[v(I_d-2tI^n_dv)^{-1}x])}{\det(I_d-2tI^n_d v)^{{\alpha}/{2}}}.
\end{equation}

\subsection{Some identities in law for affine processes}

This section gives simple but interesting identities in law for affine
processes. First, we observe that their infinitesimal
generator (\ref{GeneratorS}) only depends on $a$ through $a^Ta$ and get
%
\begin{equation}\label{AFFidentite1}
\AFF_d(x,\bar{\alpha},B,a) \underset{\mathrm{Law}}{=}\AFF_d\bigl(x,\bar{\alpha
},B,\sqrt{a^Ta}\bigr).
\end{equation}
Also, it is natural to look at linear transformations of affine
processes. Let
$q \in\mathcal{G}_d(\mathbb R)$ and define $B_q\in\mathcal{L}(\symm)$
by $B_{q}(x)=(q^T)^{-1} B(q^T xq ) q^{-1}$. One easily has that $B$
satisfies (\ref{ReQuiredAssumption}) iff $B_q$
satisfies (\ref{ReQuiredAssumption}), and we get
%
\begin{equation}\label{AFFidentite2}
\AFF_d(x,\bar{\alpha},B,a) \underset{\mathrm{Law}}{=}q^T \AFF_d((q^{-1})^T x
q^{-1},(q^{-1})^T\bar{\alpha}q^{-1},B_q,aq^{-1})q,\hspace*{-20pt}
\end{equation}
since both processes solve the same martingale problem. An interesting
consequence is given in the following proposition: any affine
process can be obtained as a linear transformation of an affine process
for which
$\bar{\alpha}$ is a diagonal matrix and $a=I^n_d$. Since our main
goal here is to sample paths of such
processes, this says to us that it is sufficient to focus on this
special case.
%
\begin{proposition}\label{Canonform}
Let $n=\Rg(a)$ be the rank of $a^Ta$. Then, there exist a
diagonal matrix $\bar{\delta}$ and a nonsingular matrix $u\in\nsing
$ such that
$\bar{\alpha} = u^T\bar{\delta} u$ and $a^Ta=u^TI_d^nu$ and we have
\[
\AFF_d(x,\bar{\alpha},B,a)\underset{\mathit{Law}}{=}u^T \AFF_d
((u^{-1})^Txu^{-1},\bar{\delta},B_u,I_d^n)u,
\]
where $\forall y \in\symm, B_u(y)=(u^{-1})^TB(u^Tyu)u^{-1}$.
\end{proposition}

The proof of this result consists of algebraic arguments and is found in
Appendix~\ref{Proofprop5}. It gives, in particular, a general way to
compute $u$ and $\bar{\delta}$. Let us
notice, however, that in the case of Wishart processes, $u$ can
directly be
obtained by using a single extended Cholesky decomposition (Lemma \ref
{OuterProdDec}).

Up to now, we have stated identities for the law of affine processes. Thanks
to the explicit characteristic function of Wishart processes, we are
also able
to get another interesting identity on the marginal laws.
%
\begin{proposition}\label{Propidloiwis}
Let $t>0$, $a,b\in\genm$ and $\alpha\ge d-1$. Let $m_t=\exp(tb)$,
$q_t=\int_0^t \exp(sb)a^T
a \exp(sb^T)\,ds$ and $n=\Rg(q_t)$. Then there is $\theta_t \in\nsing
$ such that
$q_t=t \theta_t I^n_d\theta_t^T$, and we have
%
\begin{equation}\label{RedaWishCan}\WIS_d(x,\alpha,b,a;t)\underset
{\mathit{Law}}{=}\theta_t \WIS_d(\theta_t^{-1} m_txm_t^T(\theta_t^{-1})^T
,\alpha,0,I^n_d;t)\theta_t^T.
\end{equation}
\end{proposition}

This proposition plays a crucial role for the exact simulation of
Wishart processes. Thanks to (\ref{RedaWishCan}), we can sample any
Wishart distribution if we are able to simulate exactly the
distribution $\WIS_d(x ,\alpha,0,I^n_d;t)$ for any $x\in\posm$. In
Section~\ref{sec_exact_wish}, we focus on this and give a way to sample
exactly $\WIS_d(x,\alpha,0,I^n_d;t)$. Let us stress here that we can
compute the matrix $\theta_t$ by using the extended Cholesky
decomposition of $q_t/t$, as it is explained in the proof below.
\begin{pf*}{Proof of Proposition~\ref{Propidloiwis}}
We apply Lemma~\ref{OuterProdDec} to $q_t/t\in\posm$ and consider
$(p,c_n,k_n)$ an extended Cholesky\vspace*{1pt}
decomposition of $q_t/t$. We set $\theta_t= p^{-1} \bigl({{
c_n \atop k_n}\enskip
{0 \atop I_{d-n}}}\bigr)$. Then $\theta_t$ is
invertible and it is easy to check\vadjust{\goodbreak} that $q_t=t \theta_t I^n_d\theta_t^T$.
Now, let us observe that for $v \in\symm$,
\begin{eqnarray*}
&&
\det(I_d-2iq_tv)
= \det\bigl(\theta_t(\theta
_t^{-1}-2it I^n_d\theta_t^T
v)\bigr)=\det(I_d-2it I^n_d\theta_t^T v \theta_t),\\[-25pt]
\end{eqnarray*}
\begin{eqnarray*}
&&
\Tr[iv(I_d-2iq_tv)^{-1}m_txm_t^T]\\
&&\qquad=\Tr[i (\theta_t^{-1})^T \theta
_t^T v(\theta_t \theta_t^{-1} -2it \theta_t
I^n_d \theta_t^T v \theta_t \theta_t^{-1})^{-1}m_txm_t^T]\\
&&\qquad=\Tr[i \theta_t^T v \theta_t (I_d -2i t
I^n_d \theta_t^T v \theta_t )^{-1} \theta_t^{-1} m_txm_t^T (\theta
_t^{-1})^T].
\end{eqnarray*}
Let $X^x_t \sim \WIS_d(x,\alpha,b,a;t)$ and $\tilde{X}^x_t \sim
\WIS_d(x,\alpha,0,I^n_d;t)$. Then, from (\ref{CarWishart}) and (\ref
{CarWishartcan}), we get that
\begin{eqnarray*}
\E[\exp(i\Tr(v X^x_t)) ]&=&\E\bigl[\exp\bigl(i\Tr\bigl(\theta_t^T v \theta_t
\tilde{X}^{\theta_t^{-1}
m_txm_t^T(\theta_t^{-1})^T}_t\bigr)\bigr) \bigr]\\
&=&\E\bigl[\exp\bigl(i\Tr\bigl(v \theta_t \tilde
{X}^{\theta_t^{-1} m_txm_t^T(\theta_t^{-1})^T}_t\theta_t^T\bigr) \bigr)\bigr].
\end{eqnarray*}
\upqed\end{pf*}

Last, let us mention that (\ref{RedaWishCan}) extends a usual
identity between CIR and squared Bessel distribution. It gives when
$d=1$,
\[
\WIS_1(x,\alpha,b,a;t)\underset{\mathrm{Law}}{=}a^2\frac
{e^{2bt}-1}{2bt}\WIS_1\biggl(\frac{2btx}{a^2(1-e^{-2bt})},\alpha,0,1;t\biggr).
\]
In that case, this identity can also be obtained directly from the SDE. Let
$(X^x_t)_{t\ge0} \sim \WIS_1(x,\alpha,b,a)$. Then,\vspace*{1pt}
$Y_t=e^{-2bt}X^x_t/a^2$ is a time-changed
Bessel squared process since
$dY_t=\alpha(e^{-2bt}\,dt)+2\sqrt{Y_t}(e^{-bt}\,dW_t)$. We obtain
$\WIS_1(x,\alpha$, $b,a;t){\,\underset{\mathrm{Law}}{=}\,}a^2e^{2bt}\WIS_1(x/a^2,\alpha
,0,1;\frac{1-e^{-2bt}}{2b})$. A
linear time-change also gives that
$\WIS_1(x,\alpha,0,1;\lambda t)\underset{\mathrm{Law}}{=}\lambda
\WIS_1(x/\lambda,\alpha,0,1;t)$, which leads to (\ref
{RedaWishCan}) by taking $\lambda=(1-e^{-2bt})/(2bt)$.

\section{Exact simulation of Wishart processes}\label{sec_exact_wish}

In this section, we present a new method to simulate exactly a Wishart
process. To the best of our knowledge, this is the first exact
simulation method
for noncentral Wishart distributions that works for any $\alpha\ge
d-1$. Wishart
distributions have been thoroughly studied in statistics when $\alpha
\in
\N$ (which is then called the number of degrees of freedom). Exact simulation
methods have already been proposed in that case. For
instance, Odell and Feiveson~\cite{Odell} and Smith and
Hocking~\cite{Hocking} have proposed an exact simulation method for central
Wishart distributions based on the Bartlett's
decomposition. Gleser~\cite{Gleser} extends it to any (noncentral) Wishart
distribution. Bru~\cite{Bru} also explains, when $\alpha\in\N$, how
Wishart processes
can be obtained as a square of Ornstein--Uhlenbeck processes on matrices.

Here, our method relies on the identity in law (\ref{RedaWishCan}) that
enables us to focus on the case $b=0$, $a=I^n_d$. Then we show a remarkable
splitting of the infinitesimal generator as the sum of commuting operators.
These operators are associated to SDE that can be solved explicitly
on $\posm$,
which enables us to sample any Wishart distribution.

\subsection{\texorpdfstring{A remarkable splitting for $\WIS_d(x,\alpha,0,I^n_d)$}
{A remarkable splitting for WIS d(x,alpha,0,Ind)}}
\label{subsecremsplitting}

The following theorem explains how to split the infinitesimal generator of
$\WIS_d(x,\alpha,0,I^n_d)$ as the sum of commutative infinitesimal
generators. This result is the keystone of the paper and will play a crucial
role in the sequel both for the exact and discretization schemes.
%
\begin{theorem}\label{SplioperCan}
Let $L$ be the generator associated to the Wishart process
$\WIS_d(x,\alpha,0,I^n_d)$ and $L_{e^i_d}$ be the generator associated to
$\WIS_d(x,\alpha,0,e^i_d)$ for $i \in\{1,\ldots,d \}$.
Then, we have
%
\begin{equation}\label{SplitCan}
L= \sum_{i=1}^n L_{e^i_d}
\quad\mbox{and}\quad \forall i,j \in\{
1,\ldots,d \}\qquad L_{e^i_d}L_{e^j_d}= L_{e^j_d}L_{e^i_d}.
\end{equation}
\end{theorem}
\begin{pf}
From\vspace*{1pt} (\ref{GeneratorS}), we easily get that $L= \sum_{i=1}^n
L_{e^i_d}$ since
$I^n_d=\sum_{i=1}^n e^i_d$. The commutativity property comes from
a tedious but simple calculation.
\end{pf}

Beyond the commutativity property, two other features of (\ref
{SplitCan}) are
important to notice:
\begin{itemize}
\item The operators $L_{e^i_d}$ and $L_{e^j_d}$ are the same up to the
exchange of
coordinates $i$ and~$j$.
\item The processes $\WIS_d(x,\alpha,0,e^i_d)$ and $\WIS_d(x,\alpha,0,I^n_d)$
are well defined on $\posm$ under the same hypothesis, namely, $\alpha
\ge d-1$ and $x \in\posm$.
\end{itemize}
This second property makes possible the composition that we explain
now. Let us
consider $t>0$ and $x \in\posm$. We define, iteratively,
\begin{eqnarray*}
X^{1,x}_t &\sim& \WIS_d(x,\alpha,0,e^1_d;t), \\ X^{2,X^{1,x}_t}_t
&\sim&
\WIS_d(X^{1,x}_t,\alpha,0,e^2_d;t), \\
&\vdots& \\
X^{n,\ldots^{X^{1,x}_t}}_t &\sim&
\WIS_d\bigl(X^{{n-1},\ldots^{X^{1,x}_t}}_t,\alpha,0,e^n_d;t\bigr).
\end{eqnarray*}
Thus, conditionally to $X^{i-1,\ldots^{X^{1,x}_t}}_t$, $X^{i,\ldots
^{X^{1,x}_t}}_t$ is sampled according to the distribution at
time $t$ of a Wishart process starting from $X^{i-1,\ldots
^{X^{1,x}_t}}_t$ and
with parameters $(\alpha,0,e^i_d)$. We have the following result.
%
\begin{proposition}\label{Permut}
Let $X^{n,\ldots^{X^{1,x}_t}}_t$ be defined as above. Then
\[
X^{n,\ldots^{X^{1,x}_t}}_t \sim \WIS_d(x,\alpha,0,I^n_d;t).
\]
\end{proposition}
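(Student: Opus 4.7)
The plan is to identify the law of $X^{n,\dots,X^{1,x}_t}_t$ with $WIS_d(x,\alpha,0,I^n_d;t)$ by showing that their transition operators coincide. Let $P_t^i$ denote the transition semigroup at time $t$ of $WIS_d(\cdot,\alpha,0,e^i_d)$, with infinitesimal generator $L_i$, and let $P_t$ denote the transition semigroup of $WIS_d(\cdot,\alpha,0,I^n_d)$, whose generator is $L=\sum_{i=1}^n L_i$ by Theorem~\ref{SplioperCan}. By the Markov property and iterated conditioning along the construction, I would first observe that for any bounded measurable $f$,
$\E[f(X^{n,\dots,X^{1,x}_t}_t)]=(P_t^1P_t^2\cdots P_t^nf)(x)$,
so the proposition reduces to the operator identity $P_t^1P_t^2\cdots P_t^n=P_t$.

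The core step is to lift the commutation $L_iL_j=L_jL_i$ provided by Theorem~\ref{SplioperCan} to the semigroup level. For $f$ in a suitable class of test functions stable under each $P^i_t$ (e.g.~$\Cpol$, whose stability follows from the exponential-of-affine form of the Laplace transform~\eqref{Car_Wishart_can}), both $P_s^iP_t^jf$ and $P_t^jP_s^if$ solve the Kolmogorov backward equation $\partial_s u=L_iu$ starting from $P_t^jf$, hence coincide by uniqueness for that Cauchy problem. Because the $P^i_t$ then pairwise commute, one can use the \emph{same} time $t$ in each factor (no Lie--Trotter approximation is needed): $Q_t:=P_t^1\cdots P_t^n$ is itself a Markov semigroup, and differentiating at $0$ on test functions yields generator $\sum_{i=1}^nL_i=L$. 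Weak uniqueness for the martingale problem of $WIS_d(x,\alpha,0,I^n_d)$ (as given in the introductory Theorem) then forces $Q_t=P_t$.

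The main obstacle is the functional-analytic bookkeeping in the second step: one must exhibit a common class of test functions that is simultaneously stable under every $P^i_t$, a core for each $L_i$, and rich enough to characterize laws on~$\posm$. For Wishart processes this is mild, because \eqref{Car_Wishart_can} gives an explicit closed form for $P_t^i\exp(\Tr(v\,\cdot))(x)$. As a safe fallback, one can simply compute $Q_t\exp(\Tr(v\,\cdot))(x)$ by iterating that explicit exponential-of-affine formula $n$ times; the commutativity of the $L_i$ guarantees that the result is independent of the order in which the indices are composed, and a direct comparison with~\eqref{Car_Wishart_can} identifies it with the Laplace transform of $WIS_d(x,\alpha,0,I^n_d;t)$, yielding equality of laws.
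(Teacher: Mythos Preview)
Your strategy is sound and leads to a valid proof, but it is genuinely different from the paper's. The paper does not pass through semigroup commutation or uniqueness of the Kolmogorov backward equation at all. Instead, it fixes a \emph{polynomial} test function~$f$, observes that each $L_i$ and $L$ preserve the (finite-dimensional) space of polynomials of degree~$\le m$, and therefore the exponential series $\sum_k t^kL^k f(x)/k!$ and $\sum_{k_i} t^{k_i}L_i^{k_i} f(x)/k_i!$ converge absolutely in the elementary sense. Iterated conditioning together with bounded moments then justifies $\E[f(X^{n,\dots,X^{1,x}_t}_t)]=\sum_{k_1,\dots,k_n}\frac{t^{\sum k_i}}{k_1!\cdots k_n!}L_1^{k_1}\cdots L_n^{k_n}f(x)$, and the commutativity $L_iL_j=L_jL_i$ collapses the Cauchy product to $\sum_k t^k(L_1+\cdots+L_n)^kf(x)/k!=\E[f(X^x_t)]$. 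Polynomials determine the law (moments are finite), so one concludes.

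What each approach buys: the paper's argument is completely elementary and self-contained---no core issues, no PDE uniqueness, no martingale-problem machinery beyond what is already quoted---precisely because the affine structure makes polynomials invariant and the series finite-dimensional. Your route is more conceptual and would generalize better beyond the affine case, but, as you correctly flag, it requires exhibiting a common core stable under all the $P_t^i$ and invoking a uniqueness theorem; also, your sentence ``both $P_s^iP_t^jf$ and $P_t^jP_s^if$ solve $\partial_s u=L_iu$'' is not quite the right Duhamel step (the second function does not obviously satisfy that equation without first knowing $L_iP_t^j=P_t^jL_i$). Your Laplace-transform fallback is perfectly fine and is closest in spirit to the paper's polynomial computation.
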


Thanks to this proposition, we can generate a sample according to
$\WIS_d(x,\break\alpha,0,I^n_d;t)$ as soon as we can simulate $\WIS_d(x,\alpha
,0,e^i_d;t)$. These laws are the same as $\WIS_d(x,\alpha,0,e^1_d;t)$, up
to the permutation of the first and $i$th coordinates. In the next
subsection, it is explained how to draw such random variables.

It is really easy to give a formal proof of\vspace*{1pt} Proposition
\ref{Permut}. Let $X^x_t\sim \WIS_d(x,\break\alpha,0,I^n_d;t)$ and $f$ be
a smooth function on $\posm$ such that the series below converge
absolutely. By\vspace*{1pt} iterating It\^o's formula, we have that
$\E[f(X^x_t)]=\sum_{k=0}^{\infty}t^k L^kf(x)/k!$. Similarly, we also
get by using the tower property of the conditional expectation that
%
\begin{eqnarray}\label{Exactcondexp}
\E\bigl[f\bigl(X^{n,\ldots^{X^{1,x}_t}}_t \bigr)\bigr]&=&\E\bigl[\E
\bigl[f\bigl(X^{n,\ldots^{X^{1,x}_t}}_t
\bigr)|X^{{n-1},\ldots^{X^{1,x}_t}}_t\bigr]\bigr]\nonumber\\[-8pt]\\[-8pt]
&=&\sum_{k_n=0}^{+\infty
}\frac{t^{k_n}}{k_n!}\E\bigl[L_{e^n_d}^{k_n}
f\bigl(X^{{n-1},\ldots^{X^{1,x}_t}}_t\bigr) \bigr].\nonumber
\end{eqnarray}
Simply by repeating this argument, we get that
%
\begin{eqnarray}\label{ExactCauchyprod}
\E\bigl[f\bigl(X^{n,\ldots^{X^{1,x}_t}}_t
\bigr)\bigr]&=&\sum_{k_1,\ldots,k_n=0}^{+\infty}\frac{t^{\sum_{i=1}^n k_i}}{k_1!
\cdots k_n!} L_{e^1_d}^{k_1}\cdots L_{e^n_d}^{k_n}f(x) \nonumber\\[-8pt]\\[-8pt]
&=& \sum
_{k=0}^\infty
\frac{t^k}{k!}(L_{e^1_d}+\cdots+L_{e^n_d})^k
f(x)=\E[f(X^x_t)].\nonumber
\end{eqnarray}
To get the second equality, we identify a Cauchy product and use that the
operators $L_{e^1_d},\ldots,L_{e^n_d}$ commute. To make this formal
proof correct, one has
to check that the series are well defined and can be switched with the
expectation. This check is made in the Appendix~\ref{proofPermut} for
our framework and
remains valid as soon as the operator $L_{e^i_d}$ and $L$ are of affine type.

\subsection{\texorpdfstring{Exact simulation for $\WIS_d(x,\alpha,0,e^1_d;t)$}
{Exact simulation for WIS d(x,alpha,0,e1d;t)}}
\label{subsecexactsimL1}

For the
sake of clarity, we start with the case of $d=2$ that avoids
complexities due to matrix decompositions. We deal with the general
case just after.

\subsubsection{The case $d=2$}

We start by writing explicitly the infinitesimal generator $L_{e^1_2}$
of $\WIS_2(x,\alpha,0,e^1_2)$. From (\ref{GeneratorS}), we get
%
\begin{eqnarray}\label{OpL1}
x &\in&\mathcal{S}_2^+(\R),\nonumber\\
L_{e^1_2}f(x) &=& \alpha\partial_{\{
1,1 \}}f(x) + 2 x_{\{1,1 \}} \partial_{\{1
,1 \}}^2f(x)\\
&&{}+ 2x_{\{1,2 \}}
\partial_{\{1,1 \}} \partial_{\{1,2 \}}f(x)
+ \frac{x_{\{2,2 \}}}{2}\partial_{\{1,2 \}}^2f(x).\nonumber
\end{eqnarray}
We now show that this operator is in fact associated to an SDE that can be
explicitly solved. We will denote by $(Z^1_t,t \ge0)$ and $(Z^2_t,t
\ge0)$
two independent standard Brownian motions in $\R$.

When $x_{\{2,2\}}=0$, we also have $x_{\{1,2\}}=0$ since $x$ is
nonnegative. In that case,
%
\begin{eqnarray}\label{Diff2SDE0}
X_0^x&=&x,\qquad d(X_t^x)_{\{1,1 \}} = \alpha \,dt + 2 \sqrt
{(X_t^x)_{\{1,1 \}}} \,dZ_t^1,\nonumber\\[-8pt]\\[-8pt]
d(X_t^x)_{\{1,2 \}}&=&0,\qquad d(X_t^x)_{\{2,2 \}}
= 0\nonumber
\end{eqnarray}
has the infinitesimal generator (\ref{OpL1}), which is one of a
CIR process (or of a squared Bessel process of dimension $\alpha$ to
be more precise). By using an algorithm that samples exactly a
noncentral chi-square distribution (see, e.g.,
Glasserman~\cite{Glasserman}), we can then sample $\WIS_2(x,\alpha,0,e^1_2;t)$
when \mbox{$x_{\{2,2\}}=0$}.

When $x_{\{2,2\}}>0$, it easy to check that the SDE
%
\begin{eqnarray}\label{Diff2SDE}\hspace*{28pt}
d(X_t^x)_{\{1,1 \}} &=& \alpha \,dt + 2 \sqrt
{(X_t^x)_{\{1,1 \}}-\frac{{((X_t^x)_{\{1,2 \}
})}^2}{(X_t^x)_{\{2,2 \}}}} \,dZ_t^1\nonumber\\
&&{} + 2\frac
{(X_t^x)_{\{1,2 \}}}{\sqrt{(X_t^x)_{\{2,2 \}
}}}\,dZ_t^2,\nonumber\\[-8pt]\\[-8pt]
d(X_t^x)_{\{1,2 \}}&=&\sqrt{(X_t^x)_{\{2,2 \}
}}\,dZ_t^2,\nonumber\\
d(X_t^x)_{\{2,2 \}}&=& 0,\nonumber
\end{eqnarray}
starting from $X_0^x=x$, has an infinitesimal generator equal
to $L_{e^1_2}$. To
solve (\ref{Diff2SDE}), we set
%
\begin{eqnarray}\label{UfctdeX2}
(U_t^u)_{\{1,1\}}&=&(X_t^x)_{\{1,1\}}-\frac{{ ( (X_t^x)_{\{1,2\}}
)}^2}{(X_t^x)_{\{2,2\}}},\nonumber\\[-8pt]\\[-8pt]
(U_t^u)_{\{1,2\}}&=&\frac{(X_t^x)_{\{1,2\}}}
{\sqrt{x_{\{2,2\}}}},\qquad
(U_t^u)_{\{2,2\}}=x_{\{2,2\}}.\nonumber
\end{eqnarray}
Here, $u$ stands for the initial condition, that is, $u=U^u_0$.
We get by using It\^{o} calculus that
%
\begin{eqnarray}\label{EDSU2}
d(U_t^u)_{\{1,1 \}} &=& (\alpha-1) \,dt + 2 \sqrt
{(U_t^u)_{\{1,1 \}}}
\,dZ_t^1,\nonumber\\[-8pt]\\[-8pt]
d(U_t^u)_{\{1,2 \}}&=&dZ_t^2\quad
\mbox{and}\quad d(U_t^u)_{\{2,2 \}} = 0.\nonumber
\end{eqnarray}
Therefore, $(U_t^u)_{\{1,2 \}}$ and $(U_t^u)_{\{1,1
\}}$ can be sampled,
respectively, by independent Gaussian and noncentral chi-square
variables. Then, we can get back $ X_t^x$ by inverting (\ref{UfctdeX2}),
%
\begin{eqnarray}\label{XfctdeU2}
(X_t^x)_{\{1,1 \}} &=&
(U_t^u)_{\{1,1 \}}+ (U_t^u)_{\{1,2 \}}^2,\nonumber\\[-8pt]\\[-8pt]
(X_t^x)_{\{1,2 \}} &=& (U_t^u)_{\{1,2 \}}\sqrt
{(U_t^u)_{\{2,2 \}}},\qquad
(X_t^x)_{\{2,2 \}}= (U_t^u)_{\{2,2 \}}.\nonumber
\end{eqnarray}

This result gives an interesting way to figure out the dynamics
associated to the operator $L_{e^1_2}$ by using a change of variable.
It is worth noticing that the CIR process $(U_t^u)_{\{1,1\}}$ is well
defined as soon as its degree $\alpha-1$ is nonnegative, which
coincides with the condition under which the Wishart process
$\WIS_2(x,\alpha,0,e^1_2)$ is well defined. Last, we notice that the
solution of the operator $L_{e^1_2}$ involves\vspace*{1pt} a CIR
process in the diagonal term and a Brownian motion in the nondiagonal
one. A~similar structure holds for larger $d$.

\subsubsection{The general case}

We now present a general way to sample exactly $\WIS_d(x,\alpha
,0,e^1_d;t)$. We
first write explicitly from (\ref{GeneratorS}) the infinitesimal
generator of $\WIS_d(x,\alpha,0,e^1_d)$ for $x \in\posm$
%
\begin{eqnarray}
L_{e^1_d} f(x) &=& \alpha\partial_{\{1,1 \}}f(x) +2
x_{\{1,1 \}}
\partial_{\{1,1 \}}^{2}f(x)\nonumber\\
&&{}+ 2 \mathop{\sum_{1\leq m
\leq d }}_{ m\neq
1} x_{\{1,m \}}\partial_{\{1,m \}}\partial
_{\{1,1 \}}f(x)\\
&&{}+ \frac{1}{2}
\mathop{\sum_{1 \leq m,l\leq d}}_{m \neq1,l \neq1} x_{\{ m,l
\}}
\partial_{\{1,m \}}\partial_{\{1,l \}}f(x).\nonumber
\end{eqnarray}
As for $d=2$, we will construct an SDE that has the same infinitesimal
generator $L_{e^1_d}$ and that can be solved explicitly. To do so
however, we need to use further matrix decomposition results. In the
case $d=2$, we have
already noticed that we choose different SDEs whether $x_{2,2}=0$ or
not. Here, the SDE will depend on the rank of the
submatrix $(x_{i,j})_{2\le i,j
\le d}$, and we set
\[
r=\Rg((x_{i,j})_{2\le i,j \le d}) \in\{0, \ldots, d-1\}.
\]

First, we consider the case where
%
\begin{eqnarray}\label{decompo1}
&\displaystyle \exists c_r\in
\mathcal{G}_r \mbox{ lower triangular},\qquad k_r\in\mathcal
{M}_{d-1-r\times
r}(\mathbb R),&\nonumber\\[-8pt]\\[-8pt]
&\displaystyle (x)_{2\leq i,j \leq d}=\pmatrix{
c_r & 0 \cr
k_r & 0
}\pmatrix{
c_r^T & k_r^T \cr
0 & 0
}=:cc^T.&\nonumber
\end{eqnarray}
With a slight abuse of notation, we consider that this decomposition also
holds when $r=0$ with $c=0$.
When $r=d-1$, $c=c_r$ is simply the usual
Cholesky decomposition of $(x_{i,j})_{2\le i,j \le d}$. As it is
explained in
Corollary~\ref{InvariantTrL1}, we can still get such a decomposition
up to a
permutation of the coordinates $\{2,\ldots,d\}$.
%
\begin{theorem}\label{StructureDyn}
Let us consider $x \in\posm$ such that (\ref{decompo1}) holds.
Let
$(Z_t^l)_{1\leq l\leq r+1}$ be a vector of independent standard Brownian
motions. Then, the following SDE [convention $\sum_{k=1}^r(\cdots
)=0$ when $r=0$]
%
\begin{eqnarray}\label{DiffStruMulti1}
d(X_t^x)_{\{1,1 \}}&=& \alpha \,dt + 2\sqrt
{(X_t^x)_{\{1,1 \}}-\sum_{k=1}^r \Biggl( \sum
_{l=1}^{r}(c_r^{-1})_{k,l}(X_t^x)_{\{1,l+1 \}}
\Biggr)^2}\,dZ_t^1\hspace*{-12pt}\nonumber\\
&&{} +2\sum_{k=1}^r\sum_{l=1}^{r}(c_r^{-1})_{k,l}(X_t^x)_{\{1
,l+1 \}}\,dZ_t^{k+1},\nonumber\\[-8pt]\\[-8pt]
d(X_t^x)_{\{1,i \}}&=& \sum_{k=1}^r c_{i-1,k}\,dZ_t^{k+1},\qquad
i=2,\ldots,d, \nonumber\\
d\bigl((X_t^x)_{\{ l,k \}}\bigr)_{2\leq k,l \leq d}&=&0\nonumber
\end{eqnarray}
has a unique strong solution starting from $x$. It takes
values in $\posm$ and has the infinitesimal generator $L_{e^1_d}$.
Moreover, this solution is
given explicitly by
%
\begin{eqnarray}\label{explicitsolL1}
X_t^x&=&\pmatrix{
1 & 0 &0 \cr
0 & c_r&0\cr
0 & k_r &I_{d-r-1}
}\nonumber\\
&&{}\times\pmatrix{\displaystyle
(U_t^u)_{\{1,1 \}}+\sum_{k=1}^{r} \bigl((U_t^u)_{\{1
,k+1 \}}\bigr)^2 &
\bigl((U_t^u)_{\{1,l+1 \}}\bigr)_{1\leq l \leq r}^T & 0\vspace*{2pt}\cr
\bigl((U_t^u)_{\{1,l+1 \}}\bigr)_{1\leq l \leq r} & I_{r}& 0\vspace*{1pt}\cr
0& 0& 0}\\
&&{}\times\pmatrix{
1 & 0 &0\cr
0 & c_r^T&k_r^T\cr
0 & 0 & I_{d-r-1}},\nonumber
\end{eqnarray}
where
%
\begin{eqnarray}\label{InitialCondition}
d(U_t^u)_{\{1,1 \}} &=& (\alpha-r)\,dt +
2\sqrt{(U_t^u)_{\{1,1 \}}}\,dZ^1_t,\nonumber\\
u_{\{1,1\}}&=&x_{\{1,1 \}}-\sum_{k=1}^r\bigl(u_{\{1,k+1
\}}\bigr)^2
\geq0,\nonumber\\[-8pt]\\[-8pt]
d\bigl((U_t^u)_{\{1,l+1 \}}\bigr)_{1\leq l \leq r} &=&
(dZ^{l+1}_t)_{1\leq
l\leq r},\nonumber\\
\bigl(u_{\{1,l+1 \}}\bigr)_{1\leq l \leq r}&=&c_r^{-1}\bigl(x_{\{1
,l+1 \}}\bigr)_{1
\leq l\leq
r}.\nonumber
\end{eqnarray}
\end{theorem}

Once again, we have made a slight abuse of notation when $r=0$,
and (\ref{explicitsolL1}) should be simply read as
\[
X_t^x=\pmatrix{
(U_t^u)_{\{1,1 \}} & 0 &0\cr
0 & 0&0\cr
0 & 0 & 0}
\]
in that case. In the statement above, it may seem weird that we use for $u$
and $U^u_t$ the same indexation as the one for symmetric matrices while we
only use its first row (or column). The reason is that we can, in fact,
see $X^x_t$ as a function of $U^u_t$ by setting
%
\begin{eqnarray}
(U^u_t)_{\{i,j\}}&=&u_{\{i,j\}}=x_{\{i,j\}} \qquad\mbox{for } i,j
\ge2 \quad\mbox{and}\nonumber\\[-8pt]\\[-8pt]
(U^u_t)_{\{1,i\}}&=&u_{\{1,i\}}=0 \qquad\mbox{for } r+1 \le i \le
d.\nonumber
\end{eqnarray}
Thus, $(c_r,k_r,I_{d-1})$ is an extended Cholesky decomposition of
$((U^u_t)_{i,j})_{2 \le i,j\le d}$ and can be seen as a function of
$U_t^u$. We
get from (\ref{explicitsolL1}) that
%
\begin{eqnarray} \label{XfctdeU}
X^x_t&=&h(U^u_t)\qquad\mbox{with }
h(u)=\sum_{r=0}^{d-1}
\mathbh{1}_{r=\Rg[(u_{i,j})_{2 \le i,j\le d}]}h_r(u) \quad\mbox{and}\\
h_r(u)&=&
\pmatrix{
1 & 0 &0 \cr
0 & c_r(u)&0\cr
0 & k_r(u) &I_{d-r-1}}\nonumber\\
&&{}\times\pmatrix{
\displaystyle u_{\{1,1 \}}+\sum^{r}_{k=1} \bigl(u_{\{1,k+1 \}
}\bigr)^2 & \bigl(u_{\{1,l+1 \}}\bigr)_{1\leq l \leq r}^T & 0\vspace*{2pt}\cr
\bigl(u_{\{1,l+1 \}}\bigr)_{1\leq l \leq r} & I_{r}& 0\vspace*{1pt}\cr
0& 0& 0
}\nonumber\\
&&{}\times
\pmatrix{
1 & 0 &0\cr
0 &c_r(u)^T&k_r(u)^T\cr
0 & 0 & I_{d-r-1}},\nonumber
\end{eqnarray}
where $(c_r(u),k_r(u),I_{d-1})$ is the extended Cholesky decomposition
of
$(u_{i,j})_{2 \le i,j\le d}$ given by some algorithm (e.g., Golub and
Van Loan~\cite{Golub},
Algorithm~4.2.4). Equation (\ref{XfctdeU}) will later play an important
role in analyzing discretization schemes.

The proof of Theorem~\ref{StructureDyn} is given in Appendix \ref
{proofStructureDyn}. It enables us to
simulate exactly the distribution $\WIS_d(x,\alpha,0,e^1_d;t)$ simply by
sampling one noncentral chi-square distribution for $(U_t^u)_{\{1
,1 \}}$
(see Glasserman~\cite{Glasserman}) and $r$ other independent Gaussian random
variables. As in the $d=2$ case, we notice that the condition which
ensures that the
CIR process $((U_t^u)_{\{1,1 \}},t\ge0)$ is well defined for
any $r \in\{0,\ldots,d-1\}$, namely, $\alpha-(d-1)\ge0$, is the
same as the one required for the
definition of $\WIS_d(x,\alpha,0,e^1_d)$.
%
\begin{remark}\label{RemarkOnh} From (\ref{explicitsolL1}), we
easily get by a calculation made
in (\ref{rangmat}) that $\Rg(X_t^x) = \Rg((x_{i,j})_{2 \le i,j \le
d})+\mathbh1_{(U_t^u)_{\{1,1\}}\neq0}$, and
therefore,
\[
\Rg(X_t^x)=\Rg((x_{i,j})_{2 \le i,j \le d})+1\qquad
\mbox{a.s.}
\]
In particular, $X_t^x$ is almost surely positive definite if $x\in
\dpos$.
\end{remark}

Theorem~\ref{StructureDyn} assumes that the initial value $x\in\posm$
satisfies (\ref{decompo1}). Now we explain why it is still possible,
up to a
permutation of the coordinates, to be in such a case. This relies on
the extended
Cholesky decomposition which is stated in Lemma~\ref{OuterProdDec}.
%
\begin{corollary} \label{InvariantTrL1}
Let $x\in\posm$ and $(c_r,k_r,p)$ be an
extended Cholesky decomposition of $(x_{i,j})_{2\le i,j \le d}$
(Lemma~\ref{OuterProdDec}). Then,
$\pi=\bigl({
{1\atop0}\enskip
{0\atop p}}\bigr)
$ is a \mbox{permutation} matrix, $ \WIS_d(x,\alpha,0,e^1_d) \underset
{\mathit{Law}}{=} \pi^T \WIS_d(\pi x
\pi^T,\alpha,0,e^1_d)\pi$ and
$ ((\pi x
\pi^T)_{i,j})_{2\le i,j \le d}= \bigl({
{c_r \atop k_r}\enskip {0 \atop
0}}\bigr)
\bigl({
{c_r^T \atop 0}\enskip {k_r^T\atop 0}}\bigr)$
satisfies (\ref{decompo1}).
\end{corollary}
\begin{pf}
The result comes directly from (\ref{AFFidentite2}), since $\pi
^T=\pi^{-1}$ and\break $\pi e^1_d\pi^T=e^1_d$.
\end{pf}

Therefore, by a combination of Corollary~\ref{InvariantTrL1} and
Theorem~\ref{StructureDyn}, we get a simple way to explicitly
construct a
process that has the infinitesimal generator $L_{e^1_d}$ for any
initial condition
$x\in\posm$. In particular, this enables us to sample exactly the Wishart
distribution $\WIS_d(x,\alpha,0,e^1_d;t)$. Algorithm~\ref{Algo1} below
sums up the
whole procedure.

\begin{algorithm}[t]
\KwIn{$x\in\posm$, $d$, $\alpha\ge d-1$ and $t>0$.}
\KwOut{$X$, sampled according to $\WIS_d(x,\alpha,0,e^1_d;t)$.}
Compute the extended Cholesky decomposition $(p,k_r,c_r)$ of
$(x_{i,j})_{2 \le i,j\le d}$ given by Lemma~\ref{OuterProdDec}, $r\in
\{0,\ldots,d-1\}$ (see Golub and Van Loan~\cite{Golub}
for an
algorithm);

Set $\pi=\bigl({{1\atop0}\enskip {0\atop p}}\bigr)$,
$\tilde{x}=\pi x\pi^T$, $ (u_{\{1,l+1 \}})_{1\leq l \leq
r}=(c_r)^{-1}(\tilde{x}_{\{1,l+1 \}})_{1 \leq l\leq
r}$ and $u_{\{1,1 \}}=\tilde{x}_{\{1,1 \}
}-\sum_{k=1}^r(u_{\{1,k+1 \}})^2
\geq0$;

Sample independently $r$ normal variables $G_2,\ldots,G_{r+1}\sim
\mathcal{N}(0,1)$ and
$(U_t^u)_{\{1,1 \}}$ as a CIR process at time $t$ starting
from $u_{\{1,1 \}}$ solving
$d(U_t^u)_{\{1,1 \}}=(\alpha-r)\,dt+2\sqrt
{(U_t^u)_{\{1,1 \}}}\,dZ^1_t$ (see Glasserman \cite
{Glasserman}).

Set $(U_t^u)_{\{1,l+1 \}}=u_{\{1,l+1
\}} +
\sqrt{t} G_{l+1}$;

\Return{\begin{eqnarray*}
X&=&\pi^T \pmatrix{
1 & 0 &0 \cr
0 & c_r&0\cr
0 & k_r &I_{d-r-1}}\\
&&{}\times\pmatrix{
(U_t^u)_{\{1,1 \}}+\displaystyle \sum_{k=1}^{r}\bigl((U_t^u)_{\{1,k+1
\}}\bigr)^2 &
\bigl((U_t^u)_{\{1,l+1 \}}\bigr)_{1\leq l \leq r}^T & 0\vspace*{2pt}\cr
\bigl((U_t^u)_{\{1,l+1 \}}\bigr)_{1\leq l \leq r} & I_{r}& 0\vspace*{1pt}\cr
0& 0& 0}\\
&&{}\times\pmatrix{
1 & 0 &0\cr
0 & c_r^T&k_r^T\cr
0 & 0 & I_{d-r-1}}
\pi.
\end{eqnarray*}}
\caption{Exact simulation $\WIS_d(x,\alpha,0,e^1_d;t)$}\label{Algo1}
\end{algorithm}

Let us now discuss the complexity of Algorithm~\ref{Algo1}. The number
of operations required by the extended Cholesky decomposition is of
order $O(d^3)$. From a computational point of view, the permutation is
handled directly and does not require any matrix multiplication so that
we can consider w.l.o.g. that $\pi=I_d$. Since $c_r$ is lower
triangular, the calculation of $u_{\{1,i\}}$, $i=1,\ldots,r+1$, only
requires $O(d^2)$ operations. Also, we do not perform in practice the
matrix product (\ref{explicitsolL1}), but only compute the values of
$X_{\{1,i\}}$ for $i=1,\ldots,d$, which also
requires $O(d^2)$ operations. Last, $d$ samples are at most required.
To sum up, it comes out that the complexity of Algorithm~\ref{Algo1} is
of order $O(d^3)$.

\subsection{Exact simulation for Wishart processes}

We have now shown all the mathematical results that enable us to give
an exact
simulation method for general Wishart processes. This is made in two steps.

First, we know how to sample exactly $\WIS_d(x,\alpha,0,e^1_d;t)$
thanks to
Theorem~\ref{StructureDyn} and Corollary~\ref{InvariantTrL1}. By a
simple permutation of the
first and $k$th coordinates, we are then also able to sample according
to $\WIS_d(x,\alpha,0,e^k_d;t)$ for $k \in\{1,\ldots,d \}$. Thus, we
get by
Proposition~\ref{Permut} an exact simulation method to
sample $\WIS_d(x,\alpha,0,I_d^n;t)$.\vadjust{\goodbreak} It is given explicitly in
Algorithm~\ref{Algo2}.
Then we get an exact simulation scheme for $\WIS_d(x,\alpha,b,a;t)$ by
using the
law identity (\ref{RedaWishCan}) (see Algorithm~\ref{Algo3}).

\begin{algorithm}[t]
\KwIn{$x\in\posm$, $n\le d$, $\alpha\ge d-1$ and $t>0$. }
\KwOut{$X$, sampled according to $\WIS_d(x,\alpha,0,I_d^n;t)$ }
$y=x$

\For{$k=1$ \KwTo$n$} {
Set $p_{k,1}=p_{1,k}=p_{i,i}=1$ for $i\notin\{1,k\}$ and $p_{i,j}=0$
otherwise (permutation of the
first and $k$th coordinates).

$y=pYp$ where $Y$ is sampled according
to $\WIS_d(pyp,\alpha,0,e^1_d;t)$ by using Algorithm~\ref{Algo1}.}

\Return{$X=y$.}
\caption{Exact simulation for $\WIS_d(x,\alpha,0,I_d^n;t)$}\label{Algo2}
\end{algorithm}


\begin{algorithm}[t]
\KwIn{$x\in\posm$, $\alpha\ge d-1$, $a,b \in\genm$ and $t>0$.}
\KwOut{$X$, sampled according to\vspace*{1pt} $\WIS_d(x,\alpha,b,a;t)$.}
Calculate $q_t=\int_0^t \exp(sb)a^T a \exp(sb^T)\,ds $ and
$(p,c_n,k_n)$ an extended Cholesky decomposition of $q_t/t$.

Set $\theta_t= p^{-1}
\bigl({
{c_n\atop k_n}\enskip {0
\atop I_{d-n}}}\bigr)$ and $m_t=\exp(tb)$.

\Return{$X=\theta_t Y \theta_t^T$}, where $Y \sim \WIS_d(\theta
_t^{-1} m_txm_t^T(\theta_t^{-1})^T
,\alpha,0,I^n_d;t)$ is sampled by Algorithm~\ref{Algo2}.
\caption{Exact simulation for $\WIS_d(x,\alpha,b,a;t)$}\label{Algo3}
\end{algorithm}

Let us analyze the overall complexity of Algorithm~\ref{Algo3}. Since
it basically runs $n$ times Algorithm~\ref{Algo1}, it requires a
complexity of order $O(nd^3)$ and therefore at most of order $O(d^4)$.
As we have seen, the ``bottleneck'' of Algorithm~\ref{Algo1} is the
extended Cholesky decomposition which is in $O(d^3)$. All the other
steps in Algorithm~\ref{Algo1} require at most $O(d^2)$ operations.
A~natural question for Algorithm~\ref{Algo2} is to wonder if we can reuse
the Cholesky decomposition between the loops instead of calculating it
from scratch. For example, if it were possible to get the Cholesky
decomposition of loop $k+1$ from the one of loop $k$ at a cost
$O(d^2)$, the complexity of Algorithms~\ref{Algo2}
and~\ref{Algo3} would then drop to $O(d^3)$. Despite our
investigations, we have not been able to do so up to now.\vadjust{\goodbreak}
%
\begin{remark}
When $\alpha\ge2d-1$, it is possible to sample $\WIS_d(x,\alpha,0,I_d^n;t)$
in $O(d^3)$ by another mean. If $X^1_t \sim \WIS_d(x,d,0,I_d^n;t)$ and
$X^2_t \sim \WIS_d(0,\alpha-d,0,I_d^n;t)$ are independent, we can check that
$X^1_t+X^2_t \sim \WIS_d(x,\alpha,0,I_d^n;t)$. Then, $X^1_t$ can be
sampled by using Proposition~\ref{squareOU} and $X^2_t$ by using
Bartlett's decomposition (\ref{eqBartlett}) since $X^2_t\underset
{\mathrm{Law}}{=} t \WIS_d(0,\alpha-d,0,I_d^n;1)$ from (\ref{CarWishartcan}).
\end{remark}


\subsection{The Bartlett's decomposition revisited}
\label{subsectionBartlett}

Now we would like to illustrate our exact simulation method on the particular
case $\WIS_d(0,\alpha,0,I_d^n;1)$, which is known in the literature as the
central Wishart distribution. In that case, we can perform explicitly the
composition $X^{n,\ldots^{X^{1,0}_1}}_1$ given by Proposition \ref
{Permut}. We will show by an induction on
$n$ that
%
\begin{equation}\label{eqBartlett}X^{n,\ldots^{X^{1,0}_1}}_1 =
\pmatrix{(L_{i,j})_{1
\le i,j\le n} & 0 \cr 0& 0}
\pmatrix{(L^T_{i,j})_{1 \le i,j\le n} & 0 \cr
0 & 0},
\end{equation}
where\vspace*{2pt} $(L_{i,j})_{1\leq j<i\leq d}$ and $L_{i,i}$
are independent random variables such that $L_{i,j} \sim\mathcal
{N}(0,1)$ and $(L_{i,i})^2 \sim\chi^2(\alpha
-i+1)$ and\vadjust{\goodbreak} $L_{i,j}=0$ for $i<j$. This result is known
as the Bartlett's decomposition and dates back to 1933 (see
Kshirsagar~\cite{Kshirsagar} or Kabe~\cite{Kabe}).

For $n=1$, we know from Theorem~\ref{StructureDyn} that
$(X^{1,0}_1)_{1,1} \sim \chi^2(\alpha)$ since\break
$d(X^{1,0}_t)_{1,1}=\alpha \,dt + 2 \sqrt{(X^{1,0}_t)_{1,1}}\,dZ^1_t$
with $(X^{1,0}_0)_{1,1}=0$, and all the other elements are equal
to~$0$. Let us assume now that the induction hypothesis is satisfied
for $n-1$. Then, we can apply once again Theorem~\ref{StructureDyn} (up
to the permutation of the first and $n$th coordinates). We have
$\Rg(X^{n-1,\ldots^{X^{1,0}_1}}_1)=n-1$, a.s., and the
Cholesky\vspace*{1pt} decomposition is directly given by $(L_{i,j})_{1
\le i,j \le n-1}$. Then,\vspace*{2pt} we get from (\ref{explicitsolL1}) that there
are independent variables $L_{n,n}^2 \sim\chi^2(\alpha -n+1)$ and
$L_{n,i} \sim\mathcal{N}(0,1)$ for $i \in\{1,\ldots,n-1 \} $ such that
\begin{eqnarray*}
X^{n,\ldots^{X^{1,0}_1}}_1&=&\pmatrix{
(L_{i,j})_{1 \le i,j \le n-1} & 0 &0 \cr
0 & 1 &0\cr
0 & 0 &I_{d-n}}\\
&&{}\times\pmatrix{
I_{n-1} & (L_{n,i})_{1 \le i \le n-1} & 0\cr
(L_{n,i})_{1 \le i \le n-1}^T & \displaystyle \sum_{i=1}^{n} L_{n,i}^2 & 0\cr
0& 0& 0}\\
&&{}\times\pmatrix{
(L_{i,j})^T_{1 \le i,j \le n-1} & 0 &0 \vspace*{2pt}\cr
0 & 1 &0\vspace*{2pt}\cr
0 & 0 &I_{d-n}}.
\end{eqnarray*}
Since
\begin{eqnarray*}
\pmatrix{
I_{n-1} & (L_{n,i})_{1 \le i \le n-1} & 0\vspace*{2pt}\cr
(L_{n,i})_{1 \le i \le n-1}^T &
\displaystyle \sum_{i=1}^{n} L_{n,i}^2 & 0\vspace*{2pt}\cr
0& 0& 0}
&=&
\pmatrix{
I_{n-1} & 0 & 0\vspace*{2pt}\cr
(L_{n,i})_{1 \le i \le n-1}^T & L_{n,n} & 0\vspace*{2pt}\cr
0& 0& 0}\\
&&{}\times\pmatrix{
I_{n-1} & (L_{n,i})_{1 \le i \le n-1} & 0\cr
0 & L_{n,n} & 0\cr
0& 0& 0},
\end{eqnarray*}
we conclude by induction on $n$.



\section{High-order discretization schemes for Wishart and
semidefinite positive affine processes}\label{sec_high}

In this section, we switch from exact sampling to approximate schemes.
First, this
will enable us to simulate not only Wishart processes, but also general affine
processes. More importantly, the discretization schemes that we
introduce are in practice
faster than the exact simulation scheme, especially if one has to sample
entire paths. This will be illustrated in Section~\ref{SecSimul}.

When dealing with discretization schemes,
splitting operators is a powerful technique to construct schemes for SDEs
from other schemes obtained on simpler SDEs. This idea of splitting originates
from the seminal work of Strang~\cite{Strang} in the field of ODEs. As pointed
out by Ninomiya and Victoir~\cite{NV} or Alfonsi~\cite{Alfonsi}, it
is rather easy
to analyze the weak error (i.e., the error made on marginal
distributions) of
schemes obtained by splitting. Indeed, this can be done simply by using
the same arguments as Talay and Tubaro~\cite{Talay} for the Euler--Maruyama
scheme. Nonetheless, when we use the splitting technique for SDEs that are
defined on a given domain [$\posm$ in our case], one has to be careful
that the discretization
scheme remains in it. For example, in the case of the CIR diffusion
(i.e., $d=1$),
general splitting methods such as Ninomiya and Victoir~\cite{NV} fail to
preserve the domain $\R^+$. It is, in fact, only well defined for
$\alpha\ge1$,
while the CIR process exists for any $\alpha\ge0$ (see
Alfonsi~\cite{Alfonsi}). Of course, the same remark holds for Wishart and
affine processes. This is why we will use the ad hoc
splitting (\ref{SplioperCan}) instead of general splitting methods, which
enables us to get schemes that preserve $\posm$ and are defined
without any restriction on the
parameters.

The analysis of the strong error of our schemes is beyond the scope of this
paper. In fact, behind the term ``strong error'' we have in mind here
two different
things. First, it can be the error made on pathwise expectations
between the
discretization scheme and the exact scheme. This kind of error is illustrated
numerically in the next section (Figure~\ref{vfWishartSup}) and seems
to be of
the same order as the weak error, even though we are not at all able to
mathematically show this result. Second, ``strong error'' can also mean the
pathwise error between the discretization scheme and the exact solution
for a
given Brownian motion $(W_t,t \ge0)$. The rate of convergence for this kind
of error has been analyzed for the CIR in Alfonsi~\cite{Alfonsi2} and is
really low. This is mainly due to the fact that the square root is not
Lipschitz near $0$. Fortunately, discretization schemes are mostly used
to compute
expectations with a Monte Carlo algorithm. In this context, pathwise error
is not so relevant.\looseness=-1

To our knowledge, there are very few papers in the literature that deal
with discretization schemes for Wishart processes. Recently,
Benabid, Bensusan and Karoui~\cite{Harry} have proposed a Monte Carlo
method to calculate
expectations on Wishart processes which is based on a Girsanov change of
probability. Gauthier and Possamai~\cite{GP} introduce a moment-matching
scheme for Wishart processes. Both methods are well defined under some
restrictions on the parameters, and there is no theoretical result on their
accuracy. Currently, Teichmann~\cite{semBachelier} is working on dedicated
schemes for general affine processes by approximating their characteristic
functions.

This section is structured as follows. First, we recall basic results on
the splitting technique to get discretization schemes for SDEs. We will
take the same
framework as Alfonsi~\cite{Alfonsi} since it is somehow designed for affine
processes. Then we will explain how to get high-order schemes for
$\WIS_d(x,\alpha,0,e^1_d)$ from the construction given by
Theorem~\ref{StructureDyn}.\vadjust{\goodbreak} The remarkable
splitting (\ref{SplitCan}) will then enable us to get high-order schemes
for $\WIS_d(x,\alpha,\break 0,I^n_d)$. From this result, we will be able to get
a second-order scheme for any semidefinite positive affine processes
and a
third-order scheme for Wishart processes.

\subsection{Weak error analysis and splitting methods}

Let us start with some notation. We consider a
time horizon $T>0$ and the regular time grid defined by $t_i^N=iT/N$,
$i=0,\ldots,N$. When considering a Markovian process on a domain~$\D$,
a discretization scheme
is a way to sample the value at a given time step $t>0$, starting from
the current
value $x\in\D$. It is thus described by a probability measure
$\hat{p}_x(t)(dz)$ on $\D$, and we denote by $\hat{X}^x_t$ a random
variable that follows
this law. Then the full discretization on the
regular time grid associated to this scheme from $x \in\D$ is simply a
sequence $(\xcn_{t^N_i}, 0 \le i \le N)$ of random variables
such that:
\begin{itemize}
\item$\xcn_{t^N_0}=x$,
\item the law of $\xcn_{t^N_{i+1}}$ is sampled according to $\hat
{p}_{\xcn_{t^N_i}} (T/N)(dz)$
independently from the previous samples, that is,
$\E[f(\xcn_{t^N_{i+1}})|(\xcn_{t^N_j}, 0 \le j \le i)]=\break\int_{\D}
f(z)\hat{p}_{\xcn_{t^N_i}} (T/ N)(dz)$ for any bounded measurable
function $f\dvtx\D\rightarrow\R$.
\end{itemize}

Now we focus on the analysis of the weak error
$\E[f(X^x_T)]-\E[f(\xcn_{t^N_N})]$. There is a huge literature on
this topic.
Talay and Tubaro~\cite{Talay} have obtained an expansion error for
Euler--Maruyama and Milstein schemes. This error has also been studied
on other
schemes: we cite the articles of Kusuoka~\cite{Kusuoka},
Lyons and Victoir~\cite{LyonsVictoir}, Ninomiya and Victoir~\cite{NV},
and Ninomiya and Ninomiya~\cite{Ninomiya2}, to mention a few. However,
to our
knowledge, most of these papers make regularity assumptions on the SDE
coefficients that are not satisfied by affine diffusions. Typically, they
assume that these coefficients are $\mathcal{C}^\infty$ with bounded
derivatives. This is not satisfied by general affine diffusions because
of the square root
diffusion term. For this reason, Alfonsi~\cite{Alfonsi} introduced a
framework that allows us to rigorously analyze the weak error for affine
diffusions. In this paper, we will naturally work under this
framework. Unfortunately, this requires us to introduce some
definitions, and we
present here only the main ones.

We consider a domain $\D\subset\R^\zeta$, $\zeta\in\N^*$, and $L$ an
operator associated to an SDE defined on $\D$. Mainly (but not
only), we consider in this paper $\mathbb D=\posm\subset\symm\simeq
\R^{d(d+1)/2}$.
For $\gamma=(\gamma_1, \ldots,\gamma_\zeta) \in\N^{\zeta}$, we
define
$\partial_{\gamma}=\partial_{1}^{\gamma_1},\ldots,\partial_{\zeta
}^{\gamma_{\zeta}}$
and $|\gamma|=\sum_{i=1}^{\zeta}\gamma_i$ and set
\begin{eqnarray*}
\Cpol&=&\{ f \in\mathcal C^{\infty}(\mathbb D,\mathbb R),
\forall\gamma\in\mathbb N^\zeta, \exists C_{\gamma} >0,
e_{\gamma} \in\mathbb N^*,\\
&&\hspace*{42pt} \forall x \in\mathbb D, |\partial
_{\gamma}f(x)| \leq C_{\gamma}(1 + \|x\|^{e_{\gamma}})\},
\end{eqnarray*}
where \mbox{$\|\cdot\|$} is a norm on $\R^\zeta$. We say that
$(C_\gamma,e_\gamma)_{\gamma\in\N^\zeta}$ is a \textit{good
sequence} for $f\in
\Cpol$ if one has $|\partial_{\gamma}f(x)| \leq
C_{\gamma}(1 + \|x\|^{e_{\gamma}}) $. The operator $L$ is said to
satisfy the
\textit{required assumptions} if it can be written as $L=\sum_{0<|\gamma
|\le
2}a_\gamma(x) \partial_\gamma$, with $a_\gamma\in\Cpol$. This property
holds for affine diffusions since any $a_\gamma$ is an affine function.
We will say that $\hat{X}^x_t$ is a
\textit{potential weak $\nu$th-order scheme for the operator $L$}
if for any function $f
\in\Cpol$ with a good sequence $(C_\gamma,e_\gamma)_{\gamma\in\N
^\zeta} $, there
exist\vspace*{2pt} positive constants $C, E$ and $\eta$ depending only on
$(C_\gamma,e_\gamma)_{\gamma\in\N^\zeta}$ such that
%
\begin{eqnarray}\label{defpotential}
&&\forall t \in(0, \eta)\nonumber\\[-8pt]\\[-8pt]
&&\qquad\Biggl| \E[f(\hat
{X}^x_t)]-\Biggl[ f(x)+ \sum_{k=1}^\nu
\frac{1}{k!}t^k L^k
f(x)\Biggr] \Biggr| \le Ct^{\nu+1}(1+\| x\|^{E}).\nonumber
\end{eqnarray}

Roughly speaking, this is the main assumption that a discretization scheme
should satisfy to get a weak error of order $\nu$. This is precised by the
following theorem given in~\cite{Alfonsi} that relies on the idea
developed by
Talay and Tubaro~\cite{Talay} for the Euler--Maruyama scheme.
%
\begin{theorem}\label{Thmweak} Let $L$ be an operator satisfying the
required assumptions on~$\D$. We assume that:
\begin{longlist}[(1)]
\item[(1)] $\hat{X}^x_t$ is a
potential weak $\nu$th-order scheme for $L$, and the scheme has
uniformly bounded moments, that is,
%
\begin{equation} \label{momentsbornes}
\exists n_0 \in\N^*, \forall q \in\mathbb{N}^*\qquad \underset{N \ge
n_0, 0\le i \le N}{\sup} \E[\|
\xcn_{t^N_i}\|^q]< \infty;
\end{equation}

\item[(2)] $f\dvtx\D\rightarrow\R$ is a function such that $u(t,x)=\E
[f(X^x_{T-t})]$
is defined on $[0,T] \times\D$, $\mathcal{C}^\infty$, solves
$\forall t \in
[0,T], \forall x \in\D, \partial_t u(t,x)=-Lu(t,x)$ and
satisfies
%
\begin{eqnarray}\label{contderivees}
&&
\forall l \in\N, \gamma\in\N^\zeta,\exists C_{l,\gamma
},e_{l,\gamma}>0, \forall x \in
\D, t\in[0,T]\nonumber\\[-8pt]\\[-8pt]
&&\qquad|\partial_t^l \partial_\gamma u (t,x)| \le
C_{l,\gamma}(1+\|x\|^{e_{l,\gamma}}).\nonumber
\end{eqnarray}
\end{longlist}
Then, there is $K>0$, $N_0 \in\N$, such that $|\E[f(\xcn
_{t^N_N})]-\E[f(X^{x}_T)]|
\le K /N^\nu$ for $N \ge N_0$.
\end{theorem}

It is really important to notice that only assumption (1) depends on the
discretization scheme. Assumption (2) just depends on the underlying
diffusion. Since we only have a hold over the discretization scheme, this
means from a numerical point of view that we mainly have to focus on
assumption (1) to construct an accurate scheme. From a mathematical point
of view, the regularity of the Cauchy problem which is required by
assumption (2)
is a tough problem that is interesting in its own. General results have been
obtained in Talay and Tubaro~\cite{Talay} when $b$ and $\sigma$ are
$\mathcal{C}^\infty$ with bounded derivatives. In the case of Wishart
processes, we are able to get (\ref{contderivees}) when
$f \in\Cpolde{\symm}$.\vadjust{\goodbreak}
%
\begin{proposition}\label{Flow}
Let $(X_t^x)_{t \geq0} \sim \WIS_d(x,\alpha,b,a)$ and $L$ the associated
generator. Let $f\in\Cpolde{\symm}$,
$x\in\posm$ and $T>0$. Then, $\tilde{u} (t,x)=\mathbb E[f(X_t^x)]$ is
$\mathcal{C}^{\infty}$ on $[0,T] \times\posm$, solves $\partial
_t\tilde{u} (t,x)=L\tilde{u} (t,x)$ and its derivatives satisfy
%
\begin{eqnarray}\label{derflowform}
&&\forall l \in\mathbb N,\forall n \in\mathbb{N}^{{d(d+1)}/{2}},
\exists C_{l,n},e_{l,n}>0, \forall x \in\posm, \forall
t\in[0,T]\nonumber\\[-8pt]\\[-8pt]
&&\qquad \biggl|\partial_t^l\prod_{1\leq i\leq j \leq
d}\partial_{\{ i,j \}}^{n_{\{i,j\}}} \tilde{u} (t,x)
\biggr|\leq
C_{l,n}(1+\|x\|^{e_{l,n}}).\nonumber
\end{eqnarray}
\end{proposition}

The proof of this result is made in Appendix~\ref{AppFlow}. It relies
on the
explicit formula of the characteristic function (\ref{CarWishart})
and, more
exactly, on the property stated in Lemma~\ref{Remarkable}.
Unfortunately, we
have not been able to show an analogous result for general affine processes
$\AFF_d(x,\bar{\alpha},B,a)$. We deem that (\ref{derflowform}) also
holds in that case, but this remains an open question.

Let us now turn to assumption (1) of Theorem~\ref{Thmweak}. Usually, the
boundedness of moments is not a big issue and requires, in general, tedious
calculations. This basically holds when the drift and the diffusion
coefficients have a sublinear growth, which is the case here.
Conversely, it is
much more difficult to find a scheme which is a potential $\nu$-order scheme
and stays at the same time in the domain $\posm$. For example, the
Euler--Maruyama scheme is, generally speaking, a potential first-order
scheme. However, it does not stay in $\posm$ even for the CIR case
($d=1$). Still, for the CIR process, higher-order schemes such as
Ninomiya and
Victoir~\cite{NV} or Ninomiya and Ninomiya~\cite{Ninomiya2} stay
in $\R^+$
only under additional restrictions on the parameters. To solve this problem
and get high-order schemes that remain in $\posm$, we will construct
ad hoc discretization schemes by taking advantage of the remarkable
splitting (\ref{SplitCan}). In
fact, the property of being a potential $\nu$th-order schemes is
really easy
to handle by scheme composition, especially when $\nu=2$. This kind of
result dates back to
Strang~\cite{Strang} in the field of ODEs. In our framework, we recall
a result
that is stated in~\cite{Alfonsi}.
%
\begin{proposition}\label{composchemas}
Let $L_1, L_2$ be the generators of SDEs defined on $\D$
that satisfy the required assumption on $\D$. Let
$\hat{X}^{1,x}_t$ and $\hat{X}^{2,x}_t$ denote, respectively, two
potential weak $\nu$th-order
schemes on $\D$ for $L_1$ and $L_2$.
\begin{longlist}[(2)]
\item[(1)] If $L_1L_2=L_2L_1$, $\hat{X}^{2,\hat{X}^{1,x}_t}_t$ is a potential
weak $\nu$th-order discretization scheme for $L_1+L_2$.
\item[(2)] Let $B$ be an independent Bernoulli variable of
parameter $1/2$. If $\nu\ge2$,
\[
\mbox{\textup{(a)}}\quad B \hat{X}^{2,\hat{X}^{1,x}_t}_t+(1-B)\hat{X}^{1,\hat{X}^{2,x}_t}_t
\quad\mbox{and}\quad \mbox{\textup{(b)}}\quad \hat{X}^{2,\hat{X}^{1,\hat
{X}^{2,x}_{t/2}}_t}_{t/2}
\]

are potential weak second-order
schemes for $L_1+L_2$.
\end{longlist}
\end{proposition}

Let us explain the notation above. The composition $\hat{X}^{2,\hat
{X}^{1,x}_{t_1}}_{t_2}$ means that we first use
the scheme 1 with time step $t_1$ and then, conditionally to
$\hat{X}^{1,x}_{t_1}$, we sample the scheme 2 with initial value
$\hat{X}^{1,x}_{t_1}$ and time step $t_2$. To be explicit, it has the law
$\int_{\D} \hat{p}^2_y(t_2)(dz) \hat{p}^1_x(t_1)(dy)$, where
$\hat{p}^{i}_x(t_i)(dz)$ denotes the law of $\hat{X}^{i,x}_{t_i}, i=1,2$.

\subsection{High-order schemes for Wishart processes}

In this paragraph, we will give a way to get weak $\nu$th-order schemes
for any Wishart processes. The construction is similar to
the one used for the exact scheme. First, we obtain a $\nu$th-order
scheme for
$\WIS_d(x,\alpha,0,e^1_d)$. Then, we get a $\nu$th-order scheme for
$\WIS_d(x,\alpha,0,I_d^n)$ from the splitting (\ref{SplitCan}) and
Proposition~\ref{composchemas}. Last, we use the identity in
law (\ref{RedaWishCan}) to get a weak $\nu$th-order scheme for any
Wishart process.\looseness=-1

Let us start then by introducing a potential weak $\nu$th-order scheme
for $\WIS_d(x,\alpha,0,e^1_d)$. Roughly speaking, we
obtain this scheme from the exact scheme given by
Theorem~\ref{StructureDyn} and Corollary~\ref{InvariantTrL1} by
replacing the
Gaussian random variables with moment matching
variables and the exact CIR distribution with a sample according to a potential
weak $\nu$th-order scheme for the CIR.
%
\begin{theorem}\label{WeakPotenL1}
Let $x\in\posm$ and $(c_r,k_r,p)$ be an extended Cholesky
decomposition of
$(x_{i,j})_{2 \le i,j \le d}$. We set $\pi=
\bigl({1 \atop 0}\enskip{0 \atop p}\bigr)
$
and $\tilde{x}=\pi x \pi^T$, so that
$(\tilde{x}_{i,j})_{2 \le i,j \le d}=
\bigl({c_r \atop k_r}\enskip{0 \atop 0}\bigr)
\bigl({c_r^T \atop 0}\enskip{k_r^T \atop 0}\bigr)
$. As in Theorem~\ref{StructureDyn}, we have
\[
u_{\{1,1 \}}=\tilde{x}_{\{1,1 \}}-\sum
_{k=1}^r\bigl(u_{\{1,k+1 \}}\bigr)^2
\geq0,
\]
where
\[
\bigl(u_{\{1,l+1 \}}\bigr)_{1\leq l \leq
r}=c_r^{-1}\bigl(\tilde{x}_{\{1,l+1 \}}\bigr)_{1 \leq l\leq
r},
\]
and we set $u_{\{1,i\}}=0 \mbox{ if } r+2 \le i\le d$ and $ u_{\{i,j\}
}=\tilde{x}_{\{i,j\}} \mbox{ if } i,j \ge2$.
Let $(\hat{G}^i)_{1\leq i \leq r }$ be a sequence of independent real
variables with finite moments of any order such that
\[
\forall i \in\{1,\ldots,r\}, \forall k \leq
2\nu+1\qquad \mathbb E[(\hat{G}^i)^k]=\mathbb E[G^k]\qquad\mbox{where }G
\sim
\mathcal{N}(0,1).
\]
Let $h_r$ be the function defined by (\ref{XfctdeU}). Let $(\hat
{U}_t^u)_{\{1,1 \}}$ be sampled independently according to a
potential weak $\nu$th-order scheme
for the CIR process
$d(U_t^u)_{\{1,1\}}=(\alpha-r)\,dt+2\sqrt{(U_t^u)_{\{1,1\}}}\,dZ^1_t$ starting
from $u_{\{1,1\}}$. 
We set
\begin{eqnarray*}
(\hat{U}_t^u)_{\{1,i\}}&=&u_{\{1,i\}}+\sqrt{t} \hat{G}^i,\qquad 2
\le i\le r+1,\\
(\hat{U}_t^u)_{\{1,i\}}&=&0,\qquad
r+2 \le i\le d,\\
(\hat{U}_t^u)_{\{i,j\}}&=&u_{\{i,j\}} \qquad\mbox{if } i,j \ge2.
\end{eqnarray*}
Then, the scheme $\hat{X}^x_t=\pi^T h_r(\hat{U}_t^u) \pi$ is a potential
$\nu$th-order scheme for $L_{e^1_d}$ and takes values in $\posm$.
\end{theorem}

Let us give the idea of the proof. By construction, we have $\hat
{X}^x_t \in\posm$ since an analogous formula to (\ref{explicitsolL1})
holds for $\hat {X}^x_t$. The tedious part is to check that it is a
potential $\nu$th-order scheme. We know from Theorem
\ref{StructureDyn}, equation\vspace*{1pt} (\ref{XfctdeU}) and
Corollary~\ref{InvariantTrL1} that we have $X_t^x=\pi^T h_r(U^u_t) \pi
$. It is easy to check that $\hat{U}_t^u$ is a potential $\nu$th-order
scheme for the operator associated to the diffusion $U^u_t$. Let us
suppose for a while that $h_r(u)\in\Cpolde{\symm}$. Then, $u\mapsto
f(\pi^T h_r(u) \pi)$ is also in $\Cpolde{\symm}$, and for any $f\in
\Cpolde{\posm}$, there are constants\vspace*{2pt} $C,E,\eta>0$ depending only on a
good sequence of $f$ such that
\[
|\E[f(\pi^T h_r(\hat{U}_t^u) \pi)] -\E[f(X^x_t)] |\le
Ct^{\nu+1}(1+\|x\|^E),
\]
which basically gives the desired result. Unfortunately, $h_r$ is not in
$\Cpolde{\symm}$. In fact, $h_r$ is only smooth with respect to the
coefficients
of the first row and the first columns. However, these coefficients are
also the only
ones that are changed by $\hat{U}_t^u$ [the submatrix $((\hat
{U}_t^u)_{i,j})_{2\le
i,j\le d}=(u_{i,j})_{2\le i,j\le d}$ is constant], and it comes out that
the regularity on $h_r$ is sufficient to get a potential $\nu$th-order
scheme\vspace*{1pt}
for~$L_{e^1_d}$. This is shown rigorously in the preprint version of
this paper at the cost of additional
technical definitions such as the ``immersion property'' that we do not
reproduce here.

Now we briefly comment on the practical implementation of Theorem \ref
{WeakPotenL1}. Second and third-order schemes for the CIR process
satisfying can be found in Alfonsi~\cite{Alfonsi}. We can\vspace*{1pt}
therefore get second (resp., third) order schemes for $L_{e^1_d}$ by
taking any variables that matches the five (resp.,\vspace*{1pt} the seven) first
moments of $\mathcal{N}(0,1)$. This can be obtained by taking
%
\begin{equation}
\label{VariableY}
\Px\bigl(\hat{G}^i=\sqrt{3}\bigr) = \mathbb P\bigl(\hat{G}^i=-\sqrt{3}\bigr)
=\tfrac
{1}{6} \quad\mbox{and}\quad
\mathbb P(\hat{G}^i=0)=\tfrac{2}{3},
\end{equation}
respectively,
\begin{eqnarray}
\label{VariableY5}
\mathbb P\bigl(\hat{G}^i=\varepsilon
\sqrt{3+\sqrt{6}}\bigr)&=&\frac{\sqrt{6}-2}{4\sqrt{6}},\nonumber\\[-8pt]\\[-8pt]
\mathbb
P \bigl(\hat{G}^i=\varepsilon
\sqrt{3-\sqrt{6}} \bigr)&=&\frac{1}{2}-\frac{\sqrt{6}-2}{4\sqrt
{6}}, \qquad \varepsilon
\in\{-1,1\}.\nonumber
\end{eqnarray}

We focus now on the construction of a potential weak $\nu$th-order scheme
for $\WIS_d(x,\alpha,0,I^n_d)$. Let $\hat{X}^{1,x}_t$ denote a
potential weak
$\nu$th-order scheme for $\WIS_d(x,\alpha,0,e^1_d)$. For $i\in\{
2,\ldots,d\}$,
$\WIS_d(x,\alpha,0,e^i_d)$ and $\WIS_d(x,\alpha,0,e^1_d)$ have the same
law up
to the permutation of the first and $i$th
coordinate. Let $\pi^{1 \leftrightarrow i}$ denote the associated permutation
matrix. Then, we easily get that
\[
\hat{X}^{i,x}_t=\pi^{1 \leftrightarrow i}\hat{X}^{1,\pi^{1
\leftrightarrow i}x\pi^{1 \leftrightarrow i}}_t\pi^{1 \leftrightarrow
i}
\]
is a potential $\nu$th-order scheme for $\WIS_d(x,\alpha,0,e^i_d)$.
Last, we get from Theorem~\ref{SplioperCan} and the point 1 of
Proposition~\ref{composchemas} that
%
\begin{equation}\label{potnucan}\qquad
\hat{X}^{n,\ldots^{\hat{X}^{1,x}_t}}_t \mbox{ is a potential weak
$\nu$th-order
scheme for $\WIS_d(x,\alpha,0,I_d^n)$}.
\end{equation}

Now we are in position to construct a scheme for any
Wishart process $\WIS_d(x,\alpha,b,a)$ thanks to the
identity (\ref{RedaWishCan}). Let $\theta_t\in\nsing$ be such as in
Proposition~\ref{Propidloiwis} and $\hat{Y}^y_t$ denote a potential
weak $\nu$th-order scheme for $\WIS_d(y,\alpha,0,I^n_d)$. Then we
consider the
following scheme for $\WIS_d(x,\alpha,b,a)$:
%
\begin{equation}\label{schordrenu}\hat{X}^x_t=\theta_t \hat
{Y}^{\theta_t^{-1}
m_txm_t^T(\theta_t^{-1})^T}_t\theta_t^T.
\end{equation}
Unfortunately, we need to make some technical
restrictions on $a$ and $b$ [namely, $a \in\nsing$ or $ba^T a=a^T
ab$] to show that we get like this a potential
$\nu$th-order scheme. We, however, believe that this is rather due to
our analysis of
the error and that the scheme converges as well without this
restriction. In addition, we
mention that we give in the next section a second-order
scheme based on Proposition~\ref{Canonform} for which we can make our error
analysis for any parameters.
%
\begin{proposition}\label{propweaknu}
Let $t>0$, $a,b\in\genm$ and $\alpha\ge d-1$.
Let $m_t=\exp(tb)$, $q_t=\int_0^t \exp(sb)a^T a \exp(sb^T)\,ds$ and
$n=\Rg(a^Ta)$.
We assume that either $a \in\nsing$ or $b$ and $a^T a$ commute. We
define:
\begin{itemize}
\item if $n=d$, $\theta_t$ as the (usual) Cholesky decomposition
of $q_t/t$,
\item if $n<d$, $\theta_t=\sqrt{\frac{1}{t}\int_0^t \exp(sb) \exp
(sb^T) \,ds } p^{-1}
\bigl({c_n \atop k_n}\enskip{0 \atop I_{d-n}}\bigr)
$ where $(c_n,k_n,p )$ is the extended Cholesky
decomposition of $a^Ta$ otherwise.
\end{itemize}
In both cases, $\theta_t \in\nsing$ and the scheme (\ref{schordrenu})
is a potential weak $\nu$th-order scheme for $\WIS_d(x,\alpha,b,a)$.
\end{proposition}

The proof of Proposition~\ref{propweaknu} is left in
Appendix~\ref{Apppropweaknu}. From Theorem~\ref{Thmweak}, we
finally get
the following result by using Propositions~\ref{Flow},~\ref{propweaknu}.
%
\begin{theorem}\label{thirdordertheorem}
Let $(X^{x}_t)_{t\ge0} \sim \WIS_d(x,\alpha,b,a)$ such that either
$a\in
\nsing$ or $a^Tab=ba^Ta$ and $f \in\Cpolde{\symm}$. Let $(\xcn
_{t^N_i}, 0 \le i \le
N)$ be sampled with the scheme defined by Proposition \ref
{propweaknu} and Theorem~\ref{WeakPotenL1}
with the third-order scheme for the CIR given in~\cite{Alfonsi}. Then,
\[
\exists C,N_0>0, \forall N \ge N_0\qquad
|\E[f(\xcn_{t^N_N})]-\E[f(X^{x}_T)]|\le C/N^3.
\]
\end{theorem}

\subsection{Second-order schemes for affine diffusions on $\posm$}\label{subsecsecondordaff}

In this part, we present a potential second-order scheme for
$\AFF_d(x,\overline{\alpha},B,a)$. Thanks to
Proposition~\ref{Canonform}, there is $u \in\nsing$ and a diagonal matrix\vadjust{\goodbreak}
$\overline{\delta}$ such that $ \overline{\alpha}=u^T \overline
{\delta} u$,
$a^T a = u^T I^n_d u$ and we have
\begin{eqnarray}
\bigl(u^T Y^{(u^{-1})^T x u^{-1} }_tu\bigr)_{t
\geq0} \sim \AFF_d(x,\overline{\alpha},B,a)\nonumber\\
&&\eqntext{\mbox{where }
(Y^y_t)_{t\ge0}\sim \AFF_d(y,\overline{\delta},B_u,I^n_d).}
\end{eqnarray}
Using the same linear transformation, we can get a potential $\nu$th-order
scheme for $\AFF_d(x,\overline{\alpha},B,a)$ from a potential $\nu$th-order
scheme for $\AFF_d(y,\overline{\delta},\break B_u,I^n_d)$ as stated below.
%
\begin{lemma}\label{lempotaff} If $\hat{Y}^y_t$ is a potential $\nu
$th-order scheme for
$\AFF_d(y,\overline{\delta},B_u,I^n_d)$, then $u^T \hat
{Y}^{(u^{-1})^T x u^{-1}
}_tu$ is a potential $\nu$th-order scheme for $\AFF_d(x,\overline
{\alpha},B,a)$.
\end{lemma}
\begin{pf}
Let $f \in\Cpolde{\posm}$. We then have $x\mapsto f(u^Tx u) \in
\Cpolde{\posm}$. Since $u$ is fixed, there are constants $C,\eta, E$
depending only on a
good sequence of $f$ such that for $t\in(0, \eta)$, $|\E[f( u^T \hat
{Y}^{(u^{-1})^T x u^{-1}
}_tu)]-\E[f(X^x_t)]|= |\E[f( u^T \hat{Y}^{(u^{-1})^T x u^{-1}
}_tu)]-\E[f(u^T Y^{(u^{-1})^T x u^{-1} }_tu)]| \le Ct^{\nu+1}(1+ \|
(u^{-1})^T\vspace*{1pt} x\times\break u^{-1}\|^E)\le C't^{\nu+1}(1+ \|x\|^E)$, for some
constant $C'>C$.
\end{pf}

We now focus on finding a scheme for $\AFF_d(y,\overline{\delta},B_u,I^n_d)$,
and we will construct it from the second-order scheme for
$\WIS_d(x,\alpha,0,I^n_d)$ obtained in (\ref{potnucan}).
Since $\overline{\delta}$ is a
diagonal matrix such that $ \overline{\delta} - (d-1)I^n_d \in\posm
$, we have
\[
\delta_{\min}:=\min_{1\le i \le n} \overline{\delta}_{i,i} \ge
d-1.
\]
We rewrite the infinitesimal generator of $Y_t^y$ as follows:
%
\begin{eqnarray}\label{splitaff}\quad
L&=& \Tr\bigl([\overline{\delta}+
B_u(x)]D^{\mathcal{S}}\bigr)+ 2\Tr(xD^{\mathcal{S}}I^n_d
D^{\mathcal{S}})\nonumber\\[-8pt]\\[-8pt]
&=&{\underset{L_{\mathrm{ODE}}}{\underbrace{\Tr\bigl([\overline
{\delta}-\delta_{\min}I^n_d +
B_u(x)]D^{\mathcal{S}}\bigr)}}} +
{\underset{L_{\WIS_d(x,\delta_{\min},0,I^n_d)}}{\underbrace{\delta
_{\min} \Tr(I^n_d D^{\mathcal{S}})+ 2\Tr(xD^{\mathcal{S}}I^n_d
D^{\mathcal{S}})}}}.\nonumber
\end{eqnarray}
It is the sum of the infinitesimal generator of $\WIS_d(x,\delta_{\min
},0,I^n_d)$
and of the generator of the affine ODE
\[
dX^{\mathrm{ODE},x}_t=[\overline{\delta}-\delta_{\min
}I^n_d+B_u(X^{\mathrm{ODE},x}_t)]\,dt,\qquad X^{\mathrm{ODE},x}_0=x \in
\posm.
\]
We know by Lemma~\ref{MBCAFFINEODE} that $X^{\mathrm{ODE},x}_t
\in\posm$ for any $t\ge0$ since assumption (\ref{ReQuiredAssumption})
holds for $B_u$ and $\overline{\delta}-\delta_{\min}I^n_d \in\posm$.
Besides, this ODE can be solved explicitly [see formula
(\ref{ODEexplicitesol})]. Let $\hat{X}^x_t$ denote the potential
second-order scheme for $\WIS_d(x,\delta_{\min},0,I^n_d)$ obtained by
(\ref{potnucan}) that uses the nested second-order scheme for the CIR
given in~\cite{Alfonsi}. By using Proposition~\ref{composchemas}, the
schemes
%
\begin{equation}\label{schemacompoode} \hat{Y}^x_t=X^{\mathrm{ODE},\hat
{X}^{X^{\mathrm{ODE},x}_{t/2}}_t}_{t/2}
\quad\mbox{or}\quad
\hat{Y}^x_t=(1-B)\hat{X}^{X^{\mathrm{ODE},x}_t}_t + BX^{\mathrm{ODE},\hat{X}^x_t}_t
\end{equation}
are potential second-order schemes for
$\AFF_d(x,\overline{\delta},B_u,I_d^n)$. In the numerical
experiments in Section~\ref{SecSimul}, we have used
$X^{\mathrm{ODE},\hat{X}^{X^{\mathrm{ODE},x}_{t/2}}_t}_{t/2}$ even\vspace*{1pt} though the other scheme
would have worked as well; it is, in fact, a computational trade-off between
solving a deterministic ODE and drawing a Bernoulli variable. Thanks to
Lemma~\ref{lempotaff}, Proposition~\ref{Flow} and Theorem \ref
{Thmweak}, we finally get the following result.
%
\begin{theorem}\label{secondorderthm}
The scheme defined by Lemma~\ref{lempotaff} and
equation (\ref{schemacompoode}) is a potential second-order scheme for
$\AFF_d(x,\overline{\alpha},B,a)$. In the Wishart case (\ref
{paramwishart}), we
have for $f \in\Cpolde{\symm}$,
\[
\exists C,N_0>0, \forall N \ge N_0\qquad
|\E[f(\xcn_{t^N_N})]-\E[f(X^{x}_T)]|\le C/N^2.
\]
\end{theorem}

\subsection{\texorpdfstring{A faster second-order scheme for $\AFF_d(x,\overline{\alpha},B,a)$ when $\bar{\alpha}-da^Ta \in\posm$}
{A faster second-order scheme for AFF d(x,alpha,B,a) when alpha$-da^Ta in \posm$}}\label{subsecfaster}

In this section, we focus on the complexity of the discretization
schemes with respect to the dimension $d$. Up to now, the
discretization schemes that we have considered in
Theorems~\ref{thirdordertheorem} and~\ref{secondorderthm} have a
complexity of $O(d^4)$. Indeed, both schemes rely on the
construction (\ref{potnucan}) to sample $\WIS_d(x,\alpha,0,I^n_d)$,
which requires $n$ Cholesky decompositions, like the exact sampling. This
requires at most $O(d^4)$ operations. Here, we present a second-order
scheme whose
complexity is $O(d^3)$, provided that $\bar{\alpha}-da^Ta \in\posm$
or $\alpha
\ge d$ in the Wishart case. The practical relevance of such a scheme
will be
illustrated in Section~\ref{SecSimul}.

To do so, we use the same construction as in
Section~\ref{subsecsecondordaff}, and we remark that different
splitting from (\ref{splitaff}) are
possible. In fact, we could have chosen instead $L=\Tr([\overline
{\delta}-\beta I^n_d +
B_u(x)]D^{\mathcal{S}})+ \beta\Tr(I^n_d D^{\mathcal{S}})+ 2\Tr
(xD^{\mathcal{S}}I^n_d
D^{\mathcal{S}})$ for any $\beta\in[d-1,\delta_{\min}]$: the first
part is
the operator of an affine ODE which is well defined on $\posm$ by
Lemma~\ref{MBCAFFINEODE} while the second part is the generator of
$\WIS_d(x,\beta,0,I^n_d)$. When $\delta_{\min} \ge d$, which is
equivalent to
$\overline{\alpha}-da^Ta \in\posm$, the following splitting
obtained with $\beta=d$
%
\begin{equation}\label{splitaff2}
L={\underset{\tilde{L}_{\mathrm{ODE}}}{\underbrace{\Tr\bigl([\overline{\delta}-d
I^n_d +
B_u(x)]D^{\mathcal{S}}\bigr)}}} + {\underset
{L_{\WIS_d(x,d,0,I^n_d)}}{\underbrace{d \Tr(I^n_dD^{\mathcal{S}})+
2\Tr(xD^{\mathcal{S}}I^n_d
D^{\mathcal{S}})}}}
\end{equation}
is really interesting. Indeed it is known from Bru~\cite{Bru} that Wishart
processes with $\alpha\in\N$ can be seen as the square of an
Ornstein--Uhlenbeck process on
matrices and can be simulated very efficiently. More precisely, we will use
the following result that is shown in Appendix~\ref{AppsquareOU}.
%
\begin{proposition}\label{squareOU}
Let $x\in\posm$ and $c\in\mathcal{M}_d(\R)$ be such that $c^Tc=x$.
We have
\[
\bigl((c+W_tI^n_d)^T(c+W_tI^n_d),t\ge0\bigr) \underset{\mathit{Law}}{=}
\WIS_d(x,d,0,I^n_d).
\]
If $\hat{G}$ denote a $d$-by-$d$ matrix with independent elements sampled
according to~(\ref{VariableY}),
$\hat{X}^x_t=(c+\sqrt{t}\hat{G}I^n_d)^T(c+\sqrt{t}\hat{G}I^n_d)$
is a
potential second-order scheme for $\WIS_d(x,d,0,I^n_d)$.
\end{proposition}

To compute $\hat{X}^x_t$, one has to sample $d^2$ random variables and
to make
one matrix product, which requires $O(d^3)$ operations. This is faster than
the scheme obtained by (\ref{potnucan}). Then we follow the same
line as in
Section~\ref{subsecsecondordaff} and set
\[
d\tilde{X}^{\mathrm{ODE},x}_t=[\overline{\delta}-\delta_{\min
}I^n_d+B_u(\tilde{X}^{\mathrm{ODE},x}_t)]\,dt,\qquad
\tilde{X}^{\mathrm{ODE},x}_0=x \in
\posm.
\]
This ODE is well defined on $\posm$ and can be solved explicitly. By
Proposition~\ref{composchemas},
%
\begin{equation}\label{schemacompoode2} \hat{Y}^x_t=\tilde
{X}^{\mathrm{ODE},\hat{X}^{\tilde{X}^{\mathrm{ODE},x}_{t/2}}_t}_{t/2}
\quad\mbox{or}\quad \hat{Y}^x_t=(1-B)\hat{X}^{\tilde{X}^{\mathrm{ODE},x}_t}_t + B\tilde
{X}^{\mathrm{ODE},\hat{X}^x_t}_t
\end{equation}
is a potential second-order scheme for
$\AFF_d(x,\overline{\delta},B_u,I_d^n)$ that have still an $O(d^3)$ complexity.
Thanks to
Lemma~\ref{lempotaff}, Proposition~\ref{Flow} and Theorem \ref
{Thmweak}, we get
a similar result to Theorem~\ref{secondorderthm}.
%
\begin{theorem}\label{fastsecondorderthm}
Let us assume that $\bar{\alpha}-da^Ta \in\posm$. The scheme
defined by Lemma~\ref{lempotaff} and
equation (\ref{schemacompoode2}) is a potential second-order scheme for
$\AFF_d(x,\overline{\alpha},B,a)$ that requires at most $O(d^3)$
operations. In the Wishart case (\ref{paramwishart}), we
have for $f \in\Cpolde{\symm}$,
\[
\exists C,N_0>0, \forall N \ge N_0\qquad
|\E[f(\xcn_{t^N_N})]-\E[f(X^{x}_T)]|\le C/N^2.
\]
\end{theorem}

\section{Numerical results on the simulation methods}\label{SecSimul}

The scope of this section is to compare the different simulation
methods given
in this paper. We still consider a time horizon $T$ and the regular time-grid
$t^N_i=iT/N$, for $i=0,\ldots,N$. In addition, we want to compare our schemes
to a standard one, and we will consider the following
corrected Euler--Maruyama scheme for $\AFF_d(x,\overline{\alpha},B,a)$:
%
\begin{eqnarray}\label{EulerC}
\hat{X}_{t_{0}^N}^N&=&x,\nonumber\\
\hat{X}_{t_{i+1}^N}^N&=&\hat{X}_{t_{i}^N}^N +\bigl(
\overline{\alpha}+B(\hat{X}_{t_{i}^N}^N)\bigr)\frac{T}{N} +
\sqrt{(\hat{X}_{t_{i}^N}^N)^+}(W_{t_{i+1}^N}-W_{t_{i}^N})a\\
&&{} + a
^T(W_{t_{i+1}^N}-W_{t_{i}^N})^T \sqrt{(\hat{X}_{t_{i}^N}^N)^+},\qquad 0\le
i\le N-1.\nonumber
\end{eqnarray}
Here, $x^+$ denotes the matrix that has the same eigenvectors as $x$
with the
same eigenvalue if it is positive and a zero eigenvalue otherwise.
Namely, we set $x^+=o
\operatorname{diag}(\lambda_1^+,\ldots,\lambda_d^+) o^T$ for
$x=o \operatorname{diag}(\lambda_1,\ldots,\lambda_d) o^T$. Thus, $x^+$ is by
construction a
positive semidefinite matrix and its square root is well defined. Without
this positive part, the scheme above is not well defined for any realization
of $W$.

First, we compare the time required by the different schemes and the
exact simulation. Then, we present numerical results on the
convergence of the different schemes. Last, we give an application of our
scheme to the Gourieroux--Sufana model in finance.

\subsection{Time comparison between the different algorithms}
In this paragraph, we compare the time required by the different
schemes given in this paper. As it has already been mentioned, the
complexity of the exact scheme as well as the one of the second-order
scheme (given by Theorem~\ref{secondorderthm}) and the third-order
scheme (given by Theorem~\ref{thirdordertheorem}) is in $O(d^4)$ for
one time-step. To be more precise, they require $O(d^4)$ operations
that mainly correspond to $d$ Cholesky decompositions, $O(d^2)$
generations of Gaussian (or moment-matching) variables and $O(d)$
generations of noncentral chi-square distributions (or second or
third-order schemes for the CIR). The time saved by the second and
third-order schemes with respect to the exact scheme only comes from
the generation of random variables. For example, the generation of the
moment-matching variables (\ref{VariableY}) and (\ref{VariableY5}) is
$2.5$ faster than the generation of $\mathcal{N}(0,1)$ on our computer.
The gain between the second or third-order schemes for the CIR given in
Alfonsi~\cite{Alfonsi} and the exact sampling of the CIR given by
Glasserman~\cite{Glasserman} is much greater, but it depends on the
parameters of the CIR. When the dimension $d$ gets larger, the absolute
gain in time between the discretization schemes and the exact scheme
is, of course, increased. However, the relative gain instead decreases
to $1$, because more and more time is devoted to matrix operations and
Cholesky decompositions that are the same in both cases. Let us now
quickly analyze the complexity of the other schemes. The second-order
scheme given by Theorem~\ref{fastsecondorderthm} (called ``second-order
bis'' later) has a complexity in $O(d^3)$ operations for one Cholesky
decomposition and matrix multiplications, with $O(d^2)$ generations of
Gaussian variables. The complexity of the corrected Euler scheme is of
the same kind. At each time-step, $O(d^3)$ operations are needed for
matrix multiplications and for diagonalizing the matrix in order to
compute the square root of its positive part. However, diagonalizing a
symmetric matrix is, in practice, much longer than computing a Cholesky
decomposition even though both algorithms are in $O(d^3)$. Also, one
has to sample $O(d^2)$ Gaussian variables for the Brownian increments.

\begin{table}
\caption{$\E[\exp(-\Tr(i{v \hat{X}_{t^N_N}^N}))]$ calculated by a Monte
Carlo with $10^6$ samples for a Wishart process with $a=I_d$, $b=0$,
$x= 10I_d$, $v=0.09I_d$ and $T=1$. The starred numbers are those for
which the exact value is outside the $95\%$ confidence interval, and
$\Delta_R$ (resp., $\Delta_I$) gives the two standard deviations~value
on the real (resp., imaginary) part} \label{ResultTable}
\begin{tabular*}{\tablewidth}{@{\extracolsep{\fill}}ld{2.7}
d{2.7}d{4.0}d{2.7}
d{2.7}d{4.0}@{}}
\hline
&\multicolumn{3}{c}{$\bolds{N=10}$} &
\multicolumn{3}{c@{}}{$\bolds{N=30}$}\\[-4pt]
&\multicolumn{3}{c}{\hrulefill} &
\multicolumn{3}{c@{}}{\hrulefill}\\
\multicolumn{1}{@{}l}{\textbf{Schemes}}
& \multicolumn{1}{c}{\textbf{R. value}}
& \multicolumn{1}{c}{\textbf{Im. value}}
& \multicolumn{1}{c}{\textbf{Time}}
& \multicolumn{1}{c}{\textbf{R. value}}
& \multicolumn{1}{c}{\textbf{Im. value}}
& \multicolumn{1}{c@{}}{\textbf{Time}} \\
\hline
\multicolumn{7}{c@{}}{$\alpha = 3.5$, $d=3,\Delta_R=10^{-3},\Delta
_{\mathit{Im}}=10^{-3}$,}\\
\multicolumn{7}{c@{}}{exact value $\mbox{R.} = -0.527090$ and
$\mbox{Im.}= -0.228251$}\\
[4pt]
Exact (1 step) & -0.526852 & -0.227962 & 12 & & & \\
2nd-order bis & -0.526229 & -0.228663 & 41 & -0.526486 &
-0.229078 &  125\\
2nd order & -0.526577 & -0.228923 & 76 & -0.526574 &
-0.228133 & 229\\
3rd order & -0.527021 & -0.227286 & 82 & -0.527613 &
-0.228376 & 244\\
Exact ($N$ steps) & -0.526963 & -0.228303 & 123 & -0.526891 &
-0.227729 & 369\\
Corrected Euler & -0.525627^* & -0.233863^* & 225  &
-0.525638^* &-0.231449^* & 687\\
[4pt]
\multicolumn{7}{c@{}}{$\alpha=2.2$, $d=3,\Delta_R=0.9 \times
10^{-3},\Delta_{\mathit{Im}}=1.3 \times10^{-3}$,}\\
\multicolumn{7}{c@{}}{exact value $\mbox{R.} = -0.591411$
and $\mbox{Im.}= -0.036346$}\\
[4pt]
Exact (1 step) & -0.591579 & -0.037651 & 12 & & & \\
2nd order & -0.590444 & -0.037024 & 77 & -0.590808 &
-0.036487 & 229\\
3rd order & -0.591234 & -0.034847 & 82  & -0.590818 &
-0.036210 & 246\\
Exact ($N$ steps) &-0.591169 & -0.036618 & 174 & -0.592145 &
-0.037411 & 920\\
Corrected Euler & -0.589735^* & -0.042002^* & 223 & -0.590079^*
&-0.039937^* & 680\\
[4pt]
\multicolumn{7}{c}{$\alpha=10.5,d=10,\Delta_R=1.4 \times
10^{-3},\Delta_{\mathit{Im}}=1.3 \times10^{-3}$,}\\
\multicolumn{7}{c@{}}{exact value $\mbox{R.} = 0.063960$
and $\mbox{Im.}= -0.063544$}\\
[4pt]
Exact (1 step) & 0.062712 & -0.063757 & 181 & & & \\
2nd-order bis & 0.064237 & -0.063825 & 921 & 0.064573 &
-0.062747 & 2762 \\
2nd order & 0.064922 & -0.064103 & 1431 & 0.063534 &
-0.063280 &4283 \\
3rd order & 0.064620 & -0.064543 & 1446 & 0.064120 &
-0.063122 & 4343\\
Exact ($N$ steps) & 0.063418 & -0.064636 & 1806 & 0.063469 &
-0.064380 &5408\\
Corrected Euler & 0.068298^* & -0.058491^* & 2312 & 0.061732^*
& -0.056882^* &7113\\
[4pt]
\multicolumn{7}{c}{$\alpha=9.2,d=10,\Delta_R=1.4 \times
10^{-3},\Delta_{\mathit{Im}}=1.4\times10^{-3}$,}\\
\multicolumn{7}{c@{}}{exact value $\mbox{R.} = -0.036064$
and $\mbox{Im.}= -0.093275$}\\
[4pt]
Exact (1 step) & -0.036869 & -0.094156 & 177 & & & \\
2nd order & -0.036246 & -0.094196 & 1430 & -0.035944
&-0.092770 &4285\\
3rd order & -0.035408 & -0.093479 & 1441
&-0.036277&-0.093178& 4327\\
Exact ($N$ steps) & -0.036478 &-0.092860 & 1866 & -0.036145 &
-0.093003 & 6385\\
Corrected Euler & -0.028685^* & -0.094281^* & 2321 &-0.030118^*
&-0.088988^*& 7144\\
\hline
\end{tabular*}
\end{table}

In Table~\ref{ResultTable}, we have calculated by a Monte Carlo method
one value of the characteristic function of a Wishart process. It is
also known analytically thanks to~(\ref{CarWishart}), and we have
indicated in each case the exact value. We have considered dimensions
$d=3$ and $d=10$. We have given in each case an example where
$\alpha\ge d$ and another one where $d-1 \le\alpha< d$. We have used
the different algorithms\vadjust{\goodbreak} presented in this paper: ``$2$nd-order bis''
stands for the scheme given by Theorem~\ref{fastsecondorderthm} [with
the moment-matching variables (\ref{VariableY})], ``$2$nd order''
stands for the scheme given by Theorem~\ref{secondorderthm} (with
(\ref{VariableY}) and the second-order scheme for the CIR given by
\cite{Alfonsi}), ``$3$rd order'' stands for the scheme given by Theorem
\ref{thirdordertheorem} (with (\ref{VariableY5}) and the third-order
scheme for the CIR given by~\cite{Alfonsi}) and ``Corrected Euler''
stands for the corrected Euler--Maruyama scheme (\ref{EulerC}). For the
exact scheme, we have considered both the cases with one time-step $T$
and $N$ time-steps $T/N$. Of course, the first case is sufficient to
calculate an expectation that only depends on $X_T$, but the second
case allows us to also compute pathwise expectations. For each method,
we have given the value obtained and the time needed (in seconds) on
our computer (3000 MHz~CPU).

First, let us mention that the exact value is in each case in the confidence
interval except for the corrected Euler
scheme. As one can expect, the exact method with one time-step is by
far the quickest
method to compute an expectation that only depends on the final value.
We put
aside this case and focus now on the generation of the whole path. We
see from
Table~\ref{ResultTable} that the second and the third-order schemes require
roughly the same computation time. As expected, the second-order scheme
bis is
much faster when it is defined (i.e., when $\alpha\ge d$). On the contrary,
the Euler scheme is much slower than the second and third-order scheme.
This is due to the
cost of the matrix diagonalization. Let us mention that the time
required by
the discretization schemes is proportional to $N$ and do not depend on
the parameters when the dimension
is given. On the contrary, the time needed by the exact scheme may change
according to $\alpha$ and can increase considerably when $\alpha$ is close
to $d-1$. To be more precise, the exact simulation method for the CIR
given by
Glasserman~\cite{Glasserman} uses a rejection sampling when the degree of
freedom is lower than $1$, which corresponds to the case $d-1 \le
\alpha
< d$. The rejection rate can in fact be rather high, notably when the
time-step gets smaller. For $N=30$, $d=3$ and $\alpha=2.2$, the exact
scheme is four times slower than the second-order scheme and $2.5$
slower than
the exact scheme with $\alpha=3.5$.

Let us draw a conclusion from this time comparison between the different
schemes. Obviously, we recommend the use of the exact scheme when
calculating expectations that depend on one or few dates. Instead, when
calculating pathwise
expectations of affine processes by Monte Carlo, we would recommend the
use of, in general, the second-order bis scheme when $\alpha\ge d$ and the second order (or third
order for
Wishart processes) when $d-1\le\alpha< d$.

\subsection{Numerical results on the convergence}\label{subsecconv}

Now we want to illustrate the theoretical results of convergence
obtained in
this paper for the different schemes. To do so, we have plotted for each
scheme $\E[\exp(-\Tr(i{v \hat{X}_{t^N_N}^N}))]$ in function of the time
step $T/N$. This expectation is calculated by a Monte Carlo method. As
for the time comparison, we
illustrate the convergence for $d=3$ in Figure~\ref{vfWishart3O1} and
$d=10$ in
Figure~\ref{Wishart10}. Each time, we consider a case where $\alpha
\ge d$ and a case
where $d-1 \le\alpha< d$, which is in general tougher. In these
figures:
\begin{itemize}
\item scheme $1$ denotes the value obtained by the exact scheme with one
time-step,
\item scheme $2$ stands for the second-order scheme given by
Theorem~\ref{secondorderthm},
\item scheme $3$ denotes the third-order scheme given by Theorem \ref
{thirdordertheorem},
\item scheme $4$ is the corrected Euler scheme (\ref{EulerC}).
\end{itemize}
Here, we have not plotted the convergence of the second-order (bis) scheme
given by Theorem~\ref{fastsecondorderthm} because it
would have given almost the same convergence as the other second-order
scheme.

\begin{figure}

\includegraphics{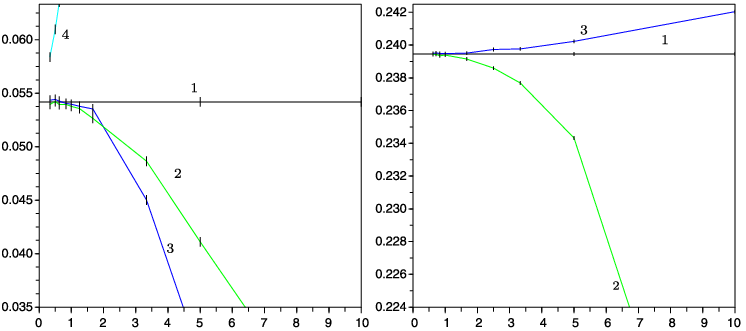}

\caption{$d=3$, $10^7$ Monte Carlo samples, $T=10$. The real value of
$\E[\exp(-\Tr(i{v \hat{X}_{t^N_N}^N}))]$ in function of the time-step
$T/N$. Left: $v=0.05I_d$ and Wishart parameters $x=0.4I_d$,
$\alpha=4.5$, $a=I_d$ and $b=0$. Exact value: $0.054277$. Right: $v=
0.2I_d+0.04q$ and Wishart parameters $x= 0.4I_d+0.2q$, $\alpha=2.22$,
$a=I_d$ and $b=-0.5I_d$. Exact value: $0.239836$. Here, $q$ is the matrix
defined by: $q_{i,j}=\mathbh{1}_{i \not= j}$. The width of each point
represents the $95\%$ confidence interval.}
\label{vfWishart3O1}
\end{figure}

\begin{figure}

\includegraphics{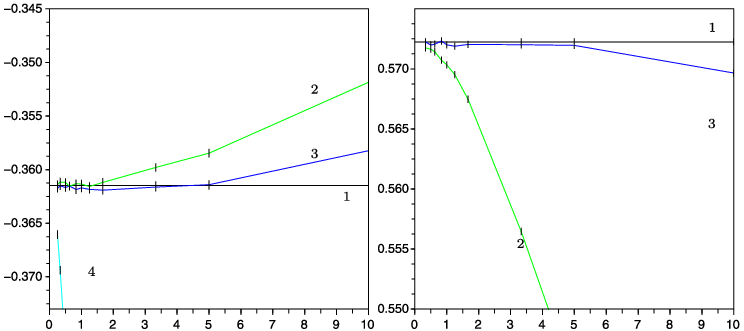}

\caption{$d=10$, $10^7$ Monte Carlo samples, $T=10$. Left:
imaginary value of $\E[\exp(-\Tr(i{v \hat{X}_{t^N_N}^N}))]$ with
$v=0.009I_d$ in function of the time-step
$T/N$. Wishart parameters: $x=0.4I_d$, $\alpha=12.5$, $b=0$ and
$a=I_d$. Exact value: $-0.361586$. Right: real value of $\E[\exp(-\Tr
(i{v \hat{X}_{t^N_N}^N}))]$ with
$v=0.009I_d$ in function of $T/N$. Wishart parameters: $x=0.4I_d$,
$\alpha=9.2$, $b=-0.5I_d$ and $a=I_d$. Exact value $0.572241$. The
width of each point
represents the $95\%$ confidence interval.}
\label{Wishart10}
\end{figure}

As expected, we observe in both Figures~\ref{vfWishart3O1} and~\ref{Wishart10}
convergences that fit our theoretical results. Namely, scheme 2
converges in
$O(1/N^2)$ and scheme 3 converges faster in $O(1/N^3)$. In some cases,
such as
Figure~\ref{Wishart10}, scheme 3 already matches the exact value from
$N=2$. Even though it seems to converge at an $O(1/N)$ speed, the corrected
Euler scheme is clearly not competitive with respect to
the other schemes. In the tough case $d-1 \le\alpha\le d$, the values
obtained by the Euler scheme are in fact outside the figures, and we
have put
the corresponding values in Table~\ref{TableEuler}.

We want to conclude this section by numerically testing the convergence
of our
schemes when we calculate pathwise expectations. Of course, our theoretical
results only bring on the weak error, but we may hope that our schemes
converge also quickly when considering more intricate expectations. In
Figure~\ref{vfWishartSup}, we approximate $\E[\max_{0\le t\le T} \Tr
(X^x_t)]$
with the different schemes by computing the maximum on the time-grid. The
convergence seems to be roughly in $O(1/\sqrt{N})$ for all the schemes
(see Figure~\ref{vfWishartSup}, left), including the exact scheme.
However, the
main error seems to come from the approximation of $\max_{0\le t\le T}
\Tr(X^x_t)$ by $\max_{0\le k\le N} \Tr(X^x_{t^N_k}) $. In fact,\vspace*{-1pt} we
have plotted
in Figure~\ref{vfWishartSup} (right) the difference between $\E[\max
_{0\le k\le N
}\Tr(\hat{X}_{t_k^N}^N)]$ and $\E[\max_{0\le k\le N
}\Tr(X_{t_k^N}^x)]$. Then, we find convergences that are very similar
to those obtained for the weak error: schemes 2 and 3 converge at a
speed which
is, respectively, compatible with $O(1/N^2)$ and $O(1/N^3)$. Scheme 4
seems also
to give an $O(1/N)$ convergence. It would be hasty to draw a global conclusion
from this simple example. Nonetheless, the convergence of schemes 2 and
3 is
really encouraging on pathwise expectations, if we put aside the
problem of
approximating a function of $(X^x_t,0\le t\le T)$ by a function of
$(X^x_{t^N_k},0\le k\le N)$.

\begin{table}[b]
\caption{Values obtained by the Euler scheme in the numerical
experiments of
Figures \protect\ref{vfWishart3O1} and \protect\ref
{Wishart10}}\label{TableEuler}
\begin{tabular*}{\tablewidth}{@{\extracolsep{\fill}}ld{2.6}d{2.6}lllc@{}}
\hline
$\bolds{N}$ & \multicolumn{1}{c}{\textbf{2}}
& \multicolumn{1}{c}{\textbf{4}} & \multicolumn{1}{c}{\textbf{8}}
& \multicolumn{1}{c}{\textbf{10}} & \multicolumn{1}{c}{\textbf{16}}
& \multicolumn{1}{c@{}}{\textbf{30}} \\
\hline
Figure~\ref{vfWishart3O1}, right & -0.000698 & 0.000394 & 0.033193 &
0.111991 & 0.185128 & 0.210201 \\
Figure~\ref{Wishart10}, right & 0.494752 & -0.464121 & 0.657041 &0.643042
& 0.637585 & 0.619553 \\
\hline
\end{tabular*}
\end{table}
%

\begin{figure}

\includegraphics{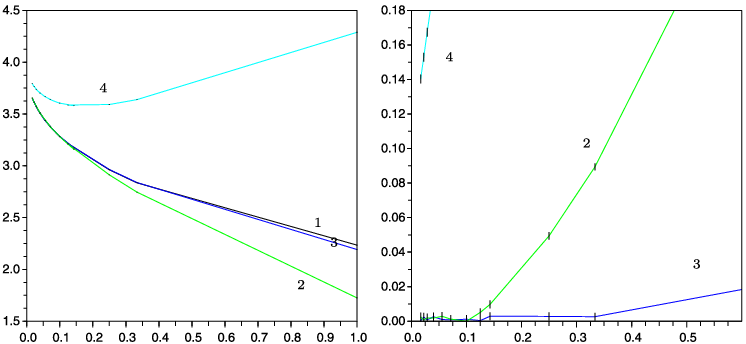}

\caption{$d=3$, $10^7$ Monte Carlo samples, $T=1$. Wishart
parameters $x=0.4I_d+0.2q$ with $q_{i,j}=\mathbh{1}_{i \not= j}$,
$\alpha=2.2$, $b=0$ and
$a=I_d$. Left: $\E[\max_{0\le k\le N
}\Tr(\hat{X}_{t_k^N}^N)]$.\vspace*{-1pt} Right: $\E[\max_{0\le k\le N
}\Tr(\hat{X}_{t_k^N}^N)]-\E[\max_{0\le k\le N
}\Tr(X_{t_k^N}^x)]$ in function of
$T/N$. The width of each point gives the precision up to two standard
deviations.}
\label{vfWishartSup}
\end{figure}

\subsection{An application in finance to the Gourieroux and Sufana model}

In this paragraph, we want to give a possible application of our
schemes in
finance. More precisely, we will consider the model introduced by Gourieroux
and Sufana~\cite{Gourieroux}. This is a model for $d$ risky assets
$S^1_t,\ldots,S^d_t$. Let $(B_t,t\ge0)$ denote a standard Brownian motion
on $\R^d$ that is independent from $(W_t,t \ge0)$. Then, we consider the
following dynamics for the assets:
%
\begin{equation}\label{sdegour}
t\ge0, 1\le l\le d,\qquad S_t^l=S_0^l+r\int_0^t S_u^l\,du +\int_0^t S_u^l
\bigl(\sqrt{X_u}\,dB_u\bigr)_{l},
\end{equation}
where $X_t =X_0 + \int_{0}^t (
\alpha a^Ta + bX_u + X_ub^T )\,du +
\int_{0}^t ( \sqrt{X_u}\,dW_ua + a^T\,dW_u^T\sqrt{X_u})$ is a
Wishart process. Here, $(\sqrt{X_u}\,dB_u)_{l}$ is simply the $l$th
coordinates of the vector $\sqrt{X_u}\,dB_u$. We can easily check that the
instantaneous quadratic covariation matrix between the log-prices of
the assets
is $X_t$. Last, $r$ denotes the instantaneous interest rate.

To simulate both assets and the Wishart matrix, we proceed as follows. We
observe that the generator of $(S_t,X_t)$ can be written as
\[
L=L^S+L^X\qquad\mbox{where } L^S= \sum_{i=1}^d rs_i \partial_{s_i}+
\frac{1}{2}\sum_{i,j=1}^d s_i s_j x_{i,j} \partial_{s_i}\partial_{s_j},
\]
and $L^X$ is the generator of the Wishart process $\WIS_d(x,\alpha
,b,a)$. The operator $L^S$ is associated to the SDE
$dS^l_t=rS^l_t+S_t^l (\sqrt{x}\,dB_t)_{l}$ that\vspace*{1pt} can be
solved explicitly. We have indeed $S^l_t=S^l_0 \exp[ (r-x_{l,l}/2)t
+(\sqrt{x}B_t)_l ]$. Let us also remark that $\sqrt{x}B_t
\underset{\mathrm{Law}}{=} c B_t$ if we have $cc^T=x$; both are
centered Gaussian vectors with the same covariance matrix. In practice,
it is more efficient to use $S^l_t=S^l_0 \exp[ (r-x_{l,l}/2)t +(cB_t)_l
]$ where $c$ is computed with an extended Cholesky decomposition of $x$
rather\vadjust{\goodbreak} than calculating $\sqrt{x}$, which requires a diagonalization.
Then we consider the scheme given by {2}(a) in Proposition~\ref{composchemas}, 
where we take the second-order scheme for
$\WIS_d(x,\alpha,b,a)$ and the exact scheme for $L^S$. This
construction is known to preserve the second-order convergence. To be
consistent with Section~\ref{subsecconv}, this scheme will be denoted
by scheme 2 in this paragraph. To compare this scheme with a more basic
one, we consider the Euler--Maruyama scheme defined by (\ref{EulerC})
and
\begin{eqnarray*}
\hat{S}^{l,N}_{t^N_{0}}&=& S^l_0,\\[-2pt]
\hat{S}^{l,N}_{t^N_{i+1}}&=&\hat{S}^{l,N}_{t^N_{i}}
\bigl(1+rT/N+\bigl(\sqrt{(\hat{X}^N_{t^N_{i}})^+}
(B_{t^N_{i+1}}-B_{t^N_{i}} )\bigr)_{l}
\bigr),\qquad 0\le i\le N-1.
\end{eqnarray*}
It is denoted by scheme 4 as in Section~\ref{subsecconv}.

We have plotted in Figure~\ref{vGourieroux} the price of a put option
on the
maximum of two risky assets ($d=2$). The Gourieroux and Sufana model is an
affine model, and the characteristic function of $S_t$ is explicitly known
(see~\cite{Gourieroux}). Thus, it is possible to
adapt the method proposed by Carr and Madan~\cite{CM} and to calculate by
numerical integration (which is possible for small dimensions) the
%
\begin{figure}

\includegraphics{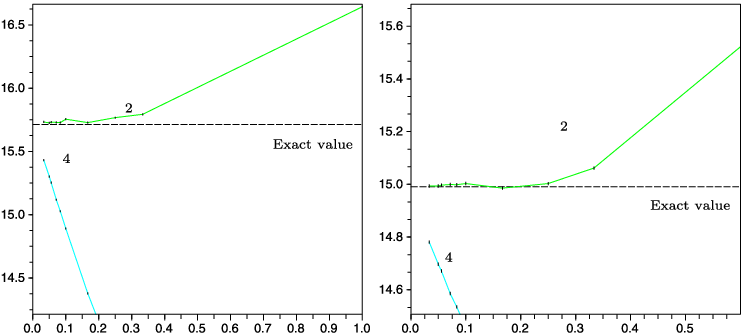}

\caption{$\E[e^{-rT}(K-\max{(\hat{S}^{1,N}_{t_N^N},\hat
{S}^{2,N}_{t_N^N})})^+]$
in function of $T/N$. $d=2$, $T=1$, $K=120$, $S^1_0=S^2_0=100$ and
$r=0.02$. Wishart parameters:
$x=0.04I_d+0.02q$ with $q_{i,j}=\mathbh{1}_{i \not= j}$, $a=0.2I_d$,
$b=0.5I_d$ and $\alpha=4.5$ (left), $\alpha=1.05$ (right). The width
of each point gives the precision up to two standard
deviations ($10^6$ Monte Carlo samples).} \label{vGourieroux}\vspace*{-3pt}
\end{figure}
value of this put option. We have given in Figure~\ref{vGourieroux} the
exact value obtained by this method. As one might have guessed, we
observe a quadratic convergence for scheme $2$ and a linear convergence
for scheme $4$. The benefit of using scheme $2$ is clear since it
already fits with the exact value from $N=5$ in both cases; its
convergence is really satisfactory.\vspace*{-2pt}

\section{Conclusion and prospects}

Let us draw a brief summary of this paper. Thanks to a remarkable
splitting of the infinitesimal generator of Wishart processes,\vadjust{\goodbreak} we have
been able to sample exactly any Wishart distribution. We have also
proposed a third-order scheme for Wishart processes and a second-order
scheme for general affine diffusions. We have confirmed these rates of
convergence with numerical tests and analyzed the time complexity of
each method. It comes out that we recommend to use the exact scheme to
compute expectations that depend on one (or few) times. To calculate
pathwise expectations, we instead recommend generally to use
discretization schemes. More precisely, the second-order scheme given
by Theorem~\ref{fastsecondorderthm} has to be preferred when $\alpha\ge
d$. Otherwise, we recommend to use the third-order scheme given by
Theorem~\ref{thirdordertheorem} for Wishart processes or the
second-order scheme given by Theorem~\ref{secondorderthm} for general
affine diffusions.

Let us give now some prospects of this work. As a possible continuation of
this paper, it is natural to study how it is possible to extend our
schemes to
affine diffusions on positive semidefinite matrices
that include jumps (see Cuchiero et al.~\cite{Teichmann}). From a modeling
point of view, we believe that Wishart processes could be used in a
wide range
of applications. In fact, they can be used as soon as one has to model
dependence dynamics. Thus, we hope that the possibility of sampling such
processes will stimulate different kinds of dependence models.

\begin{appendix}\label{app}

\section{The extended Cholesky decomposition}\label{Resmatrices}

\begin{lemma}\label{OuterProdDec}
Let $q \in\mathcal{S}^+_{d}(\mathbb R)$ be a matrix with rank $r$. Then
there is a permutation matrix $p$, an invertible lower triangular
matrix $c_r
\in\mathcal{G}_r(\mathbb R)$ and $k_r\in
\mathcal{M}_{d-r\times r}(\mathbb R)$ such that
\[
pqp^T = c c^T,\qquad
c = \pmatrix{
c_r & 0\cr
k_r & 0}.
\]
The triplet $(c_r,k_r,p)$ is called an extended Cholesky decomposition of
$q$.
Besides, $\tilde{c}=
\bigl({c_r \atop k_r}\enskip{0 \atop I_{d-r}}\bigr)
\in\nsing$, and we have
\[
q=(\tilde{c}^T p)^T I^r_d \tilde{c}^Tp.
\]
\end{lemma}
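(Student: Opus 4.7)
The statement is the classical pivoted (symmetric) Cholesky decomposition of a positive semidefinite matrix, and I would prove it by induction on the dimension~$d$. For $d=1$: if $q=0$ take $r=0$, $p=(1)$, $c=0$; if $q>0$ take $r=1$, $c_r=\sqrt q$, and $k_r$ empty. For the inductive step, first dispose of the trivial case $q=0$ by taking $r=0$, $c=0$, $p=I_d$. Otherwise, use the fact that a nonzero $q\in\mathcal{S}^+_d(\R)$ must have a strictly positive diagonal entry (since $q_{ii}=0$ together with $q\succeq 0$ forces the $i$th row and column of~$q$ to vanish). Pick such an index~$i$, let $p_1$ be the transposition swapping coordinates~$1$ and~$i$, and set $\tilde q=p_1 q p_1^T$, so that $\tilde q\succeq 0$ has the same rank~$r$ as $q$ and $\tilde q_{1,1}>0$.

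Next, perform one step of Cholesky on $\tilde q$. Writing $\tilde q=\begin{pmatrix}\tilde q_{1,1}& v^T\\ v & Q\end{pmatrix}$ with $v\in\R^{d-1}$, $Q\in\mathcal{S}^+_{d-1}(\R)$, the standard block factorization gives
\[
\tilde q=\begin{pmatrix}\sqrt{\tilde q_{1,1}}& 0\\ v/\sqrt{\tilde q_{1,1}} & I_{d-1}\end{pmatrix}\begin{pmatrix}1 & 0\\ 0 & S\end{pmatrix}\begin{pmatrix}\sqrt{\tilde q_{1,1}}& v^T/\sqrt{\tilde q_{1,1}}\\ 0 & I_{d-1}\end{pmatrix},
\]
where $S=Q-vv^T/\tilde q_{1,1}$ is the Schur complement. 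A direct computation shows that $S\succeq 0$ and $\mathrm{Rk}(S)=r-1$. By the induction hypothesis, there exist a permutation matrix $p_2'\in\mathcal{M}_{d-1}(\R)$, a lower triangular invertible $c'_{r-1}\in\mathcal{G}_{r-1}(\R)$, and $k'_{r-1}\in\mathcal{M}_{(d-r)\times(r-1)}(\R)$ such that $p_2' S (p_2')^T=c'(c')^T$ with $c'=\begin{pmatrix}c'_{r-1}&0\\ k'_{r-1}&0\end{pmatrix}$.

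Set $p_2=\begin{pmatrix}1&0\\ 0& p_2'\end{pmatrix}$ and $p=p_2 p_1$. Conjugating the factorization of $\tilde q$ by $p_2$ (which only permutes the last $d-1$ coordinates, so commutes with the block structure of the Cholesky factor), and splitting $p_2' v/\sqrt{\tilde q_{1,1}}=\begin{pmatrix}w_1\\ w_2\end{pmatrix}$ with $w_1\in\R^{r-1}$, $w_2\in\R^{d-r}$, one obtains $p q p^T=CC^T$ with
\[
C=\begin{pmatrix}\sqrt{\tilde q_{1,1}}& 0 & 0\\ w_1 & c'_{r-1} & 0\\ w_2 & k'_{r-1} & 0\end{pmatrix}.
\]
Defining $c_r=\begin{pmatrix}\sqrt{\tilde q_{1,1}}& 0\\ w_1 & c'_{r-1}\end{pmatrix}$ (lower triangular with strictly positive diagonal, hence in $\mathcal{G}_r(\R)$) and $k_r=(w_2\ k'_{r-1})$, we get $C=\begin{pmatrix}c_r & 0\\ k_r & 0\end{pmatrix}$, as required. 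For the final identity, observe that $c=\tilde c\, I_d^r$ and $c^T=I_d^r\tilde c^T$, so $cc^T=\tilde c\, I_d^r\tilde c^T$; since $p^{-1}=p^T$ one deduces $q=(\tilde c^T p)^T I_d^r(\tilde c^T p)$, and $\tilde c$ is invertible because it is block lower triangular with invertible diagonal blocks~$c_r$ and $I_{d-r}$.

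The main subtlety (and only real obstacle) is to verify that when the inductive permutation $p_2'$ is applied to the Schur complement, the resulting lower triangular structure of the global factor $c_r$ is preserved: this works precisely because $p_2$ fixes the first coordinate, so it does not scramble the already-completed first column produced by the initial Cholesky step. All other steps are routine block matrix algebra.
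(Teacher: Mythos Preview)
Your inductive argument is correct and complete: the pivoting step to bring a positive diagonal entry to position $(1,1)$, the block factorization producing the Schur complement $S\succeq 0$ of rank $r-1$, the recursive decomposition of~$S$, and the verification that the assembled factor $c_r$ remains lower triangular and invertible all go through as you describe. The final identity $q=(\tilde c^Tp)^T I_d^r(\tilde c^Tp)$ follows cleanly from $c=\tilde c\,I_d^r$ and $p^{-1}=p^T$.

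The paper itself does not give a proof of this lemma; it simply refers to Golub and Van Loan (Algorithm~4.2.4) for both the proof and a numerical procedure. Your induction is precisely the mathematical content of that pivoted Cholesky algorithm, so in substance your argument matches the reference the paper defers to.
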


The proof and a numerical procedure to get such a decomposition
can be found in Golub and Van Loan (\cite{Golub}, Algorithm 4.2.4).
When $r=d$, we can take $p=I_{d}$, and $c_r$ is the usual
Cholesky decomposition.


\section{\texorpdfstring{Proofs of Section \lowercase{\protect\ref{chapter2}}}{Proofs of Section 1}}
%
%
\subsection{\texorpdfstring{Proof of Proposition \protect\ref{LaplaceGeneral}}{Proof of Proposition 4}}
\label{proofLaplaceGeneral}

We will need in the proof the following basic lemma.
%
\begin{lemma}\label{det0} Let $b,c\in\symm$. If either $b \in\posm
$ or $c
\in\posm$, then $I_d+ibc$ is invertible. In particular, if $b \in
\dpos$,
$b+ic$ is invertible.
\end{lemma}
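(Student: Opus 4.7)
My plan is to show $\det(I_d + ibc) \neq 0$ by using Sylvester's determinant identity $\det(I_d + AB) = \det(I_d + BA)$ to symmetrize the product $bc$ inside the determinant, turning it into a matrix whose spectrum lies on the real axis.

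For the case $b \in \posm$, I would set $g = \sqrt{b} \in \posm$, so that $g$ is real symmetric with $b = g \cdot g$. Applying the Sylvester identity with $A = g$ and $B = i g c$ gives
\[
\det(I_d + ibc) \;=\; \det(I_d + i\,g c g).
\]
Since $(gcg)^T = g^T c^T g^T = gcg$, the matrix $gcg$ is real symmetric and therefore has real eigenvalues $\lambda_1, \dots, \lambda_d$. The eigenvalues of $I_d + i gcg$ are then $1 + i\lambda_k$, which are all nonzero, so $I_d + ibc$ is invertible.

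For the case $c \in \posm$, I would first apply the Sylvester identity with $A = ib$, $B = c$ to obtain $\det(I_d + ibc) = \det(I_d + icb)$, which reduces the problem to the previous case with the roles of $b$ and $c$ swapped. For the ``in particular'' statement, if $b \in \dpos$, I would factor $b + ic = b(I_d + i b^{-1} c)$: the first factor is invertible by assumption, and since $b^{-1} \in \dpos \subset \posm$, the first part of the lemma applied to $(b^{-1}, c)$ shows that the second factor is invertible, so $b + ic \in \nsing$.

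The main conceptual obstacle is finding the right reduction to a matrix with a safely located spectrum. A naive perturbation $b_\varepsilon = b + \varepsilon I_d \in \dpos$ would not suffice, since $\det(I_d + i b_\varepsilon c)$ is a sequence of nonzero complex numbers whose limit $\det(I_d + ibc)$ could still be zero. The Sylvester identity circumvents this by transforming the non-normal product $bc$ into the genuinely real symmetric product $gcg$, after which the spectral argument is immediate.
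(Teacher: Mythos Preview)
Your proof is correct and takes a genuinely different route from the paper. The paper argues by contradiction: assuming $(I_d+ibc)x=0$ for some nonzero $x\in\C^d$, it separates real and imaginary parts to obtain $x_R=bcx_I$ and $x_I=-bcx_R$, then uses the positivity of $c$ (after reducing to that case via transposition) to derive the inconsistent pair of inequalities $cx_R\cdot x_R>0$ and $cx_R\cdot x_R=-cbcx_R\cdot bcx_R<0$. This requires tracking a short chain of nonvanishing claims ($x_R\neq 0$, $cx_R\neq 0$, $bcx_R\neq 0$, $cbcx_R\neq 0$) before the inner-product contradiction can close.

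Your approach via Sylvester's determinant identity and the square root $g=\sqrt{b}$ is more direct: once you have $\det(I_d+ibc)=\det(I_d+i\,gcg)$ with $gcg$ real symmetric, the conclusion is immediate from the spectral theorem without any case chasing. Your handling of the $c\in\posm$ case by a single application of Sylvester to swap $bc$ into $cb$ is also cleaner than the paper's transpose argument, although both reductions are essentially the same observation. The ``in particular'' clause is treated identically in both proofs. Your closing remark about why the perturbation $b_\varepsilon=b+\varepsilon I_d$ fails is a nice piece of motivation, though not needed for the argument itself. Overall your argument is shorter and more transparent; the paper's has the minor advantage of not invoking Sylvester's identity as an external tool, but at the cost of a more delicate contradiction.
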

%
%
\begin{pf}
Let $v\in\symm$ such that $ \forall
s \in[0, t], I_d-2q_s v \in\nsing$. As it is usual for affine
diffusions, the Laplace\vadjust{\goodbreak} transform can be formulated with ODE
solutions. Namely, we will show that
$ \mathbb E[\exp( \Tr(vX_t^x))] =
\exp[\phi(t,v)+ \Tr(\psi(t,v)x)]$,
where $\psi$ and $\phi$ solve the following ODEs (see, e.g., Cuchiero
et al.~\cite{Teichmann}):
\begin{eqnarray*}
\partial_t \psi(t,v) &=& \psi(t,v)b + b^T\psi(t,v) +2\psi(t,v)a^T
a\psi(t,v);\qquad \psi(0,v)=v,\\
\partial_t \phi(t,v)&=& \alpha\Tr(\psi(t,v));\qquad \phi(0,v)=0.
\end{eqnarray*}
The function $\psi$ solves an usual matrix Riccati ODE. As shown by
Levin~\cite{Levin}, $\psi$~can be obtained explicitly by the mean of an
exponential matrix, and we get
%
\[
\psi(t,v)=\exp(tb^T)(I_d-2v q_t)^{-1}v \exp(tb),
\]
%
provided that $I_d-2q_sv$ is invertible for $s\in[0,t]$, which holds
by assumption. Therefore we get, for $x \in\symm$,
\begin{eqnarray*}
\Tr(\psi(t,v)x)&=&\Tr\bigl( (I_d-2vq_t)^{-1}v\exp(tb)x \exp(tb^T)
\bigr)\\
&=&\Tr\bigl( v (I_d-2q_tv)^{-1} \exp(tb)x \exp(tb^T)\bigr),
\end{eqnarray*}
%
since $v (I_d-2q_tv)^{-1}=(I_d-2vq_t)^{-1}v$. As explained by Grasselli and
Tebaldi (\cite{Tebaldi}, Section 4.2), $\phi$ can also be calculated
explicitly by the mean of the exponential matrix above, and we get
\[
\phi(t,v)=-\frac{\alpha}{2} \Tr\bigl( \log[
(I_d-2vq_t)\exp(tb^T) ] -t \Tr(b) \bigr).
\]
By using that $\exp(\operatorname{Tr}(\operatorname{log}(A)))= \det
(A)$ for $A
\in
\nsing$, we deduce then that
\begin{eqnarray*}
\exp(\phi(t,v))&=&\exp\biggl(\frac{\alpha}{2} t \operatorname
{Tr}(b)\biggr)\bigl(\operatorname{det}\{(I_d-2vq_t)\}
\operatorname{det}\{\exp(tb^T)\}\bigr)^{{-\alpha
}/{2}}\\
&=&\frac
{1}{\det(I_d-2q_tv)^{{\alpha}/{2}}}.
\end{eqnarray*}
%


Now it remains to show that (\ref{CarWishart}) indeed holds. By It\^
{o} calculus, we get
that for $s \in(0,t)$,
%
\begin{eqnarray} \label{Itocar}
&&d\exp\bigl[\phi(t-s,v)+\Tr\bigl(\psi
(t-s,v)X^x_s\bigr)\bigr]\nonumber\\
&&\qquad=\exp\bigl[\phi(t-s,v)+\Tr\bigl(\psi(t-s,v)X^x_s\bigr)\bigr]\\
&&\qquad\quad{}\times\Tr\bigl[\psi(t-s,v)\bigl(\sqrt{X_s^x}\,dW_sa +
a^T\,dW_s^T\sqrt{X_s^x}\bigr)\bigr].\nonumber
\end{eqnarray}
Thus, $\exp[\phi(t-s,v)+\Tr(\psi(t-s,v)X^x_s)]$ is a positive local
martingale and therefore a supermartingale, which gives that
$\E[\exp(\Tr(vX^x_t))]\le\exp[\phi(t,v)+\Tr(\psi(t,v)x)]<\infty
$, that is, $\mathcal{D}_{b,a;t} \subset\tilde{\mathcal
{D}}_{x,\alpha,b,a;t}, \mbox{ where } $
\[
\mathcal{D}_{b,a;t}:= \{v \in\symm, \forall s \in[0, t], I_d-2q_s
v \in\nsing\}
\]
and
\[
\tilde{\mathcal
{D}}_{x,\alpha,b,a;t}:=\{v
\in\symm, \E[\exp(\Tr(vX^x_t))]< \infty\}.
\]
On the other hand, when $-v \in\dpos$, we can check that
$\exp[\phi(t-s,v)+\Tr(\psi(t-s,v)X^x_s)]\le1$ by observing that
$\det(I_d-2q_tv)=\det(I_d+2\sqrt{-v}\*q_t\sqrt{-v})\ge1$ and
$\Tr( v (I_d-2q_tv)^{-1} \exp(tb)x \exp(tb^T))=-\Tr
( \sqrt{-v}
(I_d+2\*\sqrt{-v}q_t\sqrt{-v})^{-1}\sqrt{-v} \exp(tb)x \exp
(tb^T))\le0
$. In that case, $\exp[\phi(t-s,v)+\Tr(\psi(t-s,v)X^x_s)]$ is a martingale
from (\ref{Itocar}), and (\ref{CarWishart}) holds.

Let us now observe that $\mathcal{D}_{b,a;t}$ is
convex. In fact, we have $\det(I_d-2q_s v)=\det(I_d-2\sqrt{q_s}
v\sqrt{q_s})$,
and therefore, $\mathcal{D}_{b,a;t}= \{v \in\symm, \forall s \in
[0, t], I_d-\break 2 \sqrt{q_s} v \sqrt{q_s} \in\dpos\}$ which is obviously
convex. The Laplace transform $v \mapsto\E[\exp(\Tr(vX^x_t))]$
is an analytic function on $\mathcal{D}_{b,a;t}$ (see, e.g., \cite
{Filipovic}, Lemma~10.8).
The right-hand side of (\ref{CarWishart}) is also analytic on
$\mathcal{D}_{b,a;t}
$ and coincides with the Laplace
transform when $-v \in\dpos$. Therefore, (\ref{CarWishart}) holds
for $v \in
\mathcal{D}_{b,a;t}$ since $\mathcal{D}_{b,a;t}$ is convex. Now, we
can extend
to complex values of $v$. Indeed, the right-hand side of (\ref
{CarWishart}) is well defined for $v=v_R+iv_I$ with $v_R \in\mathcal
{D}_{b,a;t}$, thanks to
Lemma~\ref{det0}. Since both-hand sides
are analytic functions of $v$, (\ref{CarWishart}) holds for $v=v_R+iv_I$.

Last, we want to show that
$\mathcal{D}_{b,a;t}=\tilde{\mathcal{D}}_{x,\alpha,b,a;t}$. We
first consider
the case $b=0$ and assume by a way of contradiction that there is $v
\in
\tilde{\mathcal{D}}_{x,\alpha,0,a;t} \setminus\mathcal{D}_{0,a;t}$
for some
$x$, $\alpha$, $a$ and $t>0$. Let $\tilde{t}= \min
\{s \in[0,t], I_d-2 q_s v \notin\nsing\} \in(0,t]$. On the
one hand, we have $v \notin\mathcal{D}_{0,a; \tilde t}$ and $v \in
\mathcal{D}_{0,a; s}$ for $s \in[0, \tilde t)$. On the other hand, we
have, by Jensen's inequality
\[
s\in[0,t],\qquad \exp\bigl(\alpha(t-s) \Tr(va^T a )\bigr) \exp( \Tr(vX^x_{s}))
\le\E[\exp( \Tr(vX^x_t)) |\mathcal{F}_{s}],
\]
which gives $s \in[0, t] \mapsto
\exp(-\alpha s \Tr(v a^Ta)) \E[\exp(\Tr(vX^x_{ s}))]$ is
nondecreasing and finite.\vspace*{1pt} Since (\ref{CarWishart}) holds for
$s<\tilde t$, we get that
$\E[\exp(\Tr(vX^x_{ \tilde t}))]=+\infty$, which leads to a contradiction.
Let us now consider the case $b \not= 0$. From Proposition \ref
{Propidloiwis} (which is a
consequence of the characteristic function obtained above), we have
\begin{eqnarray*}
v \in\tilde{\mathcal{D}}_{x,\alpha,b,a; t} \quad&\iff&\quad\theta_t^T v
\theta_t
\in\mathcal{D}_{0,I^n_d;t} \\
&\iff&\quad\forall s \in[0,t ]\qquad \det\bigl(I_d -2(s/t)
q_t v\bigr) \not= 0.
\end{eqnarray*}
In particular, $\tilde{\mathcal{D}}_{x,\alpha,b,a; t}$ is an open
set. For $v
\in\nsing$, we have $\det(I_d -2(s/t)
q_t v) \not= 0 \iff\det(v^{-1} -2(s/t)
q_t ) \not= 0$ [resp., $\det(I_d -2 q_s v) \not= 0 \iff\det(v^{-1}
-2 q_s )\not=
0 $]. Since $s q_t \le s'q_t$ (resp., $q_s \le q_{s'}$) for $s \le s'$,
we know
from Theorem 8.1.5 in~\cite{Golub} that the (real) eigenvalues of
$v^{-1} -2(s/t)
q_t$ (resp., $v^{-1} -2 q_s$) are nonincreasing w.r.t. $s$. Since they
are also
continuous, and $v^{-1} -2(s/t) q_t=v^{-1} -2 q_s$ for $s\in\{0,t \}$, we
get\vspace*{1pt} that $\forall s \in[0,t ], \det(v^{-1} -2(s/t)
q_t ) \not= 0 \iff\forall s \in[0,t ], \det(v^{-1} -2 q_s ) \not=
0$ and
thus $\tilde{\mathcal{D}}_{x,\alpha,b,a; t} \cap
\nsing=\mathcal{D}_{b,a;t}\cap\nsing$. Let $v \in
\tilde{\mathcal{D}}_{x,\alpha,b,a; t}$. Since
$\tilde{\mathcal{D}}_{x,\alpha,b,a; t}$ is an open set, there is
$\varepsilon>0$ such that $v\pm\varepsilon I_d \in
\tilde{\mathcal{D}}_{x,\alpha,b,a; t}\cap\nsing$. Since $\mathcal
{D}_{b,a;t}$
is convex, $v=(v+\varepsilon I_d+v-\varepsilon I_d)/2 \in\mathcal{D}_{b,a;t}$.
\end{pf}

\subsection{\texorpdfstring{Proof of Proposition \protect\ref{Canonform}}{Proof of Proposition 5}}
\label{Proofprop5}
Once $u$ is given, the identity in law comes directly
from (\ref{AFFidentite2}). We now give a constructive proof of
the existence of $u$, which takes back the arguments given by Golub and Van
Loan (\cite{Golub}, Theorem 8.7.1). Nonetheless, we explain it
entirely since
it gives a practical way to get $u$.

Let us consider $\bar{\alpha}+a^Ta \in\posm$. From the extended
Cholesky decomposition given in Lemma~\ref{OuterProdDec} there is a
matrix $v
\in\nsing$ such that $v^T\bar{\alpha}v+v^Ta^Tav= I^r_d$, where
$r=\Rg(\bar{\alpha}+a^Ta)$. Since
$v^T\bar{\alpha}v \in\posm$, $v^Ta^Tav \in\posm$ and $z^T I^r_d
z=0$ for $z
\in\R^d$ such that $z_1=\cdots=z_r=0$, there are $s_1,s_2 \in
\mathcal{S}^+_{n}(\R)$ such that
\[
v^T\bar{\alpha}v =\pmatrix{
s_1 & 0\cr
0 & 0}
\quad\mbox{and}\quad v^Ta^Tav=\pmatrix{
s_2 & 0\cr
0 & 0}.
\]
Let $o_2$ be an orthogonal matrix such that $o_2^Ts_2o_2$ is a
diagonal matrix. We assume without loss of generality that only the first
$n$ elements of this diagonal are positive: $o_2^Ts_2o_2=\operatorname{diag}(\eta
_1,\ldots,\eta_n,0,\ldots,0)$. We set $o=
\bigl({o_2 \atop 0}\enskip{0 \atop I_{d-r}}\bigr)
$ and get $I^r_d=o^Tv^T\bar{\alpha}vo+o^Tv^Ta^Tavo, $ which
gives that $o^Tv^T\bar{\alpha}vo$ is a diagonal matrix. Thus, we get the
desired result by taking $u\!=\!\operatorname{diag}(\sqrt{\eta_1},\ldots,\sqrt{\eta
_n},1,\ldots,1) o^{-1} v^{-1}$.

\section{\texorpdfstring{Proofs of Section \lowercase{\protect\ref{sec_exact_wish}}}{Proofs of Section 2}}
\subsection{\texorpdfstring{Proof of Proposition \protect\ref{Permut}}{Proof of Proposition 8}}\label{proofPermut}
Let $X^x_t\sim \WIS_d(x,\alpha,0,I^n_d;t)$. We will check that for any
polynomial function $f$ of the matrix
elements, we have
$\E[f(X^x_t)]=\E[f(X^{n,\ldots^{X^{1,x}_t}}_t)]$. Let us consider a
polynomial function $f$ of degree $m$,
\[
x \in\symm,\qquad f(x)= \sum_{\gamma\in\N^{d(d+1)/2}, |\gamma| \le m}
a_\gamma\bar{x}^\gamma,
\]
where $|\gamma|=\sum_{1\le i \le j \le d} |\gamma_{\{i,j\}}|$ and
$\bar{x}^\gamma=\prod_{1\le i \le j \le d}
x_{\{i,j\}}^{\gamma_{\{i,j\}}}$. Since\vspace*{1pt} the operators are
affine, it is easy to check that $Lf(x)$ and $L_{e^i_d}f(x)$ are also
polynomial functions of degree $m$. We set
\[
\|f\|_{\mathbb P}=\sum_{\gamma\in\N^{d(d+1)/2}, |\gamma| \le m}
|a_\gamma| \quad\mbox{and}\quad |L|=\max_{\gamma\in\N^{d(d+1)/2}, |\gamma
| \le
m} \|L \bar{x}^\gamma\|_{\mathbb P},
\]
so that $\|L^k f\|_{\mathbb P} \le|L|^k \|f\|_{\mathbb P}$ for any $k
\in\N$. Therefore, the series $\sum_{k=0}^{\infty} t^k L^kf(x)/\break k!$ converges
absolutely. By using $l+1$ times It\^{o}'s formula, we get
\[
\E[f(X_t^{x})]=\sum_{k=0}^l\frac{t^{k}}{k!}L^{k}f(x)+\int_0^t\frac
{(t-s)^l}{l!}\E[L^{l+1}f(X_s^{x})]\,ds.
\]
Wishart processes have bounded moments since the drift and\break
diffusion
coefficients have a sublinear growth. Thus, $C=\break \max_{\gamma\in\N
^{d(d+1)/2}, |\gamma| \le
m} \sup_{s\in[0,t] }\E[|\overline{X}_s^{x^\gamma}|]< \infty$ and
we obtain that\break
$|{\int_0^t}\frac{(t-s)^l}{l!}\E[L^{l+1}f(X_s^{x})]\,ds|\le C\|f\|
_{\mathbb P}(t|L|)^{l+1}/(l+1)!
\underset{l\rightarrow+ \infty} {\rightarrow} 0 $. Thus, we have
$\E[f(X^x_t)]=\sum_{k=0}^{\infty} t^k L^kf(x)/k!$ and similarly we
get that
\[
\E\bigl[f\bigl(X^{n,\ldots^{X^{1,x}_t}}_t
\bigr)|X^{{n-1},\ldots^{X^{1,x}_t}}_t\bigr]=\sum_{k_n=0}^{+\infty}\frac
{t^{k_n}}{k_n!}L_{e^n_d}^{k_n}
f\bigl(X^{{n-1},\ldots^{X^{1,x}_t}}_t\bigr).
\]
Now, we remark that $\tilde{C}=\max_{\gamma\in\N^{d(d+1)/2},
|\gamma| \le
m} \sup_{s\in[0,t]
}\max(\E[|\overline{X}^{1,x^\gamma}_t|],\ldots,\break\E[|\overline
{X}^{n,\ldots^{X^{1,x^\gamma}_t}}_t|])<\infty$
by using once again that Wishart processes have bounded moments. Since
$\E[|L_{e^n_d}^{k_n}
f(X^{{n-1},\ldots^{X^{1,x}_t}}_t)|]\le\tilde{C} \|f\|_{\mathbb P}
|L_{e^n_d}|^{k_n}
$, we can switch the expectation with the series\vspace*{1pt} and
get (\ref{Exactcondexp}). Then, since $L_{e^n_d}^{k_n}f(x)$ are polynomial
function of degree $m$, we can iterate this argument and finally
get (\ref{ExactCauchyprod}), which gives the result.

\subsection{\texorpdfstring{Proof of Theorem \protect\ref{StructureDyn}}{Proof of Theorem 9}}
\label{proofStructureDyn}

The proof is divided into two parts. First, we prove that the
SDE (\ref{DiffStruMulti1}) has a unique strong solution which is given
by (\ref{explicitsolL1}) and is well defined on $\posm$. Second, we
show that its
infinitesimal generator is equal to the operator $L_{e^1_d}$ defined
in (\ref{OpL1}).

\textit{First step.} Let us assume that $(X^x_t)_{t\ge0}$ is a solution
to (\ref{DiffStruMulti1}). We use the matrix decomposition of
$(x_{i,j})_{2\le i,j\le d}$ given by (\ref{decompo1}) and set
\begin{eqnarray*}
(U_t)_{\{1,l+1 \}} &=& \sum
_{i=1}^r(c^{-1}_r)_{l,i}(X^x_t)_{\{1,i+1 \}},\qquad l\in\{ l,\ldots,r\}, \\
(U_t)_{\{1,1 \}}&=& (X^x_t)_{\{1,1 \}}-\sum
_{l=1}^r \Biggl(
\sum_{i=1}^{r}(c^{-1}_r)_{l,i}(X^x_t)_{\{1,i+1 \}}
\Biggr)^2\\
&=&(X^x_t)_{\{1,1 \}}-\sum_{l=1}^r\bigl((U_t)_{\{1,l+1
\}}\bigr)^2.
\end{eqnarray*}

We get by using Lemma~\ref{DemonstrationLemmaDesompo} that
\begin{eqnarray*}
\hspace*{-5pt}&&
\pmatrix{
1 & 0 &0 \cr
0 & c_r&0\cr
0 & k_r &I_{d-r-1}
}\\
\hspace*{-5pt}&&\quad{}\times
{\fontsize{10.35pt}{11pt}\selectfont{\pmatrix{\displaystyle
(U_t)_{\{1,1 \}}+\sum_{k=1}^r\bigl((U_t)_{\{1,k+1
\}}\bigr)^2 & \bigl((U_t)_{\{1,l+1 \}}\bigr)_{1\leq l \leq r}^T &
0\vspace*{2pt}\cr
\bigl((U_t)_{\{1,l+1 \}}\bigr)_{1\leq l \leq r} & I_{r}& 0\vspace*{2pt}\cr
0& 0& 0
}}}\\
\hspace*{-5pt}&&\quad{}\times
\pmatrix{
1 & 0 &0\cr
0 & c_r^T&k_r^T\cr
0 & 0 & I_{d-r-1}
}
\\
\hspace*{-5pt}&&\qquad={\fontsize{10.35pt}{11pt}\selectfont{\pmatrix{
\displaystyle (U_t)_{\{1,1 \}}+\sum_{k=1}^r\bigl((U_t)_{\{1,k+1
\}}\bigr)^2 & \bigl((U_t)_{\{1,l+1 \}}\bigr)_{1\leq l \leq r}^T
c_r^T & \bigl((U_t)_{\{1,l+1 \}}\bigr)_{1\leq l \leq r}^T k_r^T\vspace*{2pt}\cr
c_r \bigl((U_t)_{\{1,l+1 \}}\bigr)_{1\leq l \leq r} & c_rc_r^T &
c_rk_r^T\vspace*{2pt}\cr
k_r \bigl((U_t)_{\{1,l+1 \}}\bigr)_{1\leq l \leq r} & k_r c_r^T& 0}}
}\\
\hspace*{-5pt}&&\qquad= X^x_t.
\end{eqnarray*}
Since
\[
\pmatrix{
1 & 0 &0 \cr
0 & c_r&0\cr
0 & k_r &I_{d-r-1}
}
\]
is invertible, $X^x_t \in\posm$ if, and only if
%
\begin{eqnarray} \label{rangmat}
&&\forall z \in\R^d\nonumber\\
&&\qquad
z^T
{\fontsize{10.35pt}{11pt}\selectfont{\pmatrix{
\displaystyle (U_t)_{\{1,1 \}} + \sum_{i=1}^r\bigl((U_t)_{\{1,i+1
\}}\bigr)^2&\bigl((U_t)_{\{1,l \}}\bigr)_{2\leq l \leq r+1 }&0\vspace*{2pt}\cr
\bigl((U_t)_{\{ l,1 \}}\bigr)_{2\leq l \leq r+1 }&I_r&0\vspace*{2pt}\cr
0&0&0
}}}z \nonumber\\[-8pt]\\[-8pt]
&&\qquad\qquad =z_1^2(U_t)_{\{1,1 \}}+\sum
_{i=1}^r\bigl(z_{i+1}+(U_t)_{\{1,i+1 \}}z_1\bigr)^2\nonumber\\
&&\qquad\qquad\ge0 \quad\iff\quad(U_t)_{\{1,1 \}} \ge0. \nonumber
\end{eqnarray}
In particular, we get that $(U_0)_{\{1,1 \}}=u_{\{1,1\}}
\ge0$
since $x
\in\posm$. Now, by It\^{o} calculus, we get from (\ref
{DiffStruMulti1}) that
\[
d(U_t)_{\{1,l+1 \}} = \sum_{i=1}^r\sum_{k=1}^r (c^{-1}_r)_{l,i}
(c_r)_{i,k}\,dZ^{k+1}_t=
dZ_t^{l+1}
\]
and
\begin{eqnarray*}
d(U_t)_{\{1,1 \}}&=&(\alpha-r) \,dt + 2\sqrt
{(U_t)_{\{1,1 \}}}\,dW_t^1\\
&&{}+2\sum_{l=1}^r\sum
_{k=1}^r(c_r^{-1})_{l,k}(X_t)_{\{1,k+1 \}}\,dW_t^{l+1}\\
&&{}-\sum_{l=1}^r2\bigl((U_t)_{\{1,l+1 \}}\bigr)\,dW_t^{l+1}\\
&=& (\alpha-r) \,dt + 2\sqrt{(U_t)_{\{1,1 \}}}\,dW_t^1.
\end{eqnarray*}
Thus, the solution $(X^x_t)_{t\ge0}$ is necessarily the one given
by (\ref{explicitsolL1}) [pathwise uniqueness holds for
$((U^u_t)_{\{1,l \}})_{1\le l \le r+1}$, and especially
for the CIR diffusion
$(U^u_t)_{\{1,1 \}}$ since $\alpha\ge d-1\ge r$].
Reciprocally, it
is easy to check by It\^o calculus
that (\ref{explicitsolL1}) solves (\ref{DiffStruMulti1}).

\textit{Second step.} Now we want to show that $L_{e^1_d}$ is the infinitesimal
operator associated to the process $(X^x_t)_{t \geq0}$. It is
sufficient to
compare the drift and the quadratic covariation of the process $X_t^x$ with
$L_{e^1_d}$. Since the drift part of $(X^x_t)_{t \geq0}$ clearly
corresponds to the
first order of $L_{e^1_d}$, we study directly the quadratic part.
From (\ref{DiffStruMulti1}), we have for $i,j\in\{2,\ldots
,d\}^2$,
\begin{eqnarray*}
&&d\bigl\langle(X^x_t)_{\{1,1 \}},(X^x_t)_{\{1,1 \}
}\bigr\rangle\\
&&\qquad=
4\Biggl((X^x_t)_{\{1,1 \}}-\sum_{k=1}^r \Biggl[
\sum_{l=1}^{r}(c_r^{-1})_{k,l}(X^x_t)_{\{1,l+1 \}}\Biggr]^2
\\
&&\qquad\quad\hspace*{53.7pt}{}
+\sum_{k=1}^r \Biggl[ \sum_{l=1}^{r}(c_r^{-1})_{k,l}(X^x_t)_{\{1,l+1
\}}\Biggr] ^2\Biggr)\\
&&\qquad= 4 (X^x_t)_{\{1,1 \}} \,dt,\\
&&d\bigl\langle(X^x_t)_{\{1,i \}},(X^x_t)_{\{1,j \}
}\bigr\rangle\\
&&\qquad= \sum
_{k=1}^r(c_r)_{i-1,k}(c_r)_{j-1,k} \,dt
= (cc^T)_{i-1,j-1}\,dt\\
&&\qquad= (X^x_t)_{\{ i,j \}}\,dt, \\
&&d\bigl\langle(X^x_t)_{\{1,1 \}},(X^x_t)_{\{1,i \}
}\bigr\rangle\\
&&\qquad= 2
\sum_{k=1}^r\sum_{l=1}^r
(c_r)_{i-1,k}(c_r^{-1})_{k,l}(X^x_t)_{\{1,l+1 \}
}\,dt\\
&&\qquad=2(X^x_t)_{\{1,i \}}\,dt
\qquad\mbox{if } i \le r+1,
\end{eqnarray*}
\begin{eqnarray*}
&&d\bigl\langle(X^x_t)_{\{1,1 \}},(X^x_t)_{\{1,i \}
}\bigr\rangle\\
&&\qquad= 2\sum
_{k=1}^r\sum_{l=1}^r (k_r)_{i-1-r,k}(c_r^{-1})_{k,l}(X^x_t)_{\{1
,l+1 \}}\,dt \\
&&\qquad= 2\sum_{l=1}^r(k_r c_r^{-1})_{i-1-r,l}(X^x_t)_{\{1,l+1
\}}\,dt\\
&&\qquad= 2(X^x_t)_{\{1,i \}} \,dt \qquad\mbox{if } i > r+1\qquad \mbox{by
Lemma \ref
{DemonstrationLemmaDesompo}}.
\end{eqnarray*}
Thus, we deduce that $L_{e^1_d}$ is the infinitesimal generator of
$(X^x_t)_{t \geq0}$.
%
\begin{lemma}\label{DemonstrationLemmaDesompo}
Let $y\in\posm$. We set $r=\Rg((y_{i,j})_{2\leq i,j \leq d})$,
$y_1^{r}=(y_{1,i+1})_{ 1\leq i\leq r}$ and $y_1^{r,d}=(y_{1,{i+1}})_{
r+1\leq
i\leq d}$. We assume that there are an invertible matrix $c_r$ and a
matrix $k_r$ defined on $\mathcal{M}_{d-r-1 \times r}(\mathbb R)$, such
that
\[
(y_{i,j})_{2\leq i,j \leq d} = \pmatrix{
c_r & 0\cr
k_r & 0
} \pmatrix{
c_r^T & k_r^T\cr
0 & 0
}.
\]
Then, we have $y_1^{r,d}= k_r c_r^{-1} y_1^r$.
\end{lemma}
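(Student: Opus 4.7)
The plan is to exploit the positive semidefiniteness of $y$ via the classical range criterion for psd block matrices: if $\begin{pmatrix} a & b^T \\ b & C \end{pmatrix} \in \mathcal{S}_d^+(\R)$ with $a \in \R$, $b \in \R^{d-1}$, $C \in \mathcal{S}_{d-1}^+(\R)$, then $b$ must lie in the range of $C$. This is standard (it is the compatibility condition in the generalized Schur complement theory), and can be cited or proved in one line by noting that a vector $v$ in $\ker C$ together with $y \succeq 0$ forces $b^T v = 0$, so $b \in (\ker C)^\perp = \operatorname{Range}(C)$.

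I would apply this to the decomposition
$$y = \begin{pmatrix} y_{1,1} & (y_1^r)^T & (y_1^{r,d})^T \\ y_1^r & c_r c_r^T & c_r k_r^T \\ y_1^{r,d} & k_r c_r^T & k_r k_r^T \end{pmatrix},$$
with $b = \bigl((y_1^r)^T,(y_1^{r,d})^T\bigr)^T$ and $C = AA^T$ where $A = \begin{pmatrix} c_r \\ k_r \end{pmatrix} \in \mathcal{M}_{d-1 \times r}(\R)$. Since $c_r$ is invertible, $A$ has full column rank~$r$, so $\operatorname{Range}(C) = \operatorname{Range}(AA^T) = \operatorname{Range}(A)$. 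The range criterion thus yields some $z \in \R^r$ with
$$\begin{pmatrix} y_1^r \\ y_1^{r,d} \end{pmatrix} = A z = \begin{pmatrix} c_r z \\ k_r z \end{pmatrix}.$$

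Reading off the first block gives $z = c_r^{-1} y_1^r$, and substituting into the second block gives the claimed identity $y_1^{r,d} = k_r c_r^{-1} y_1^r$. There is no real obstacle here; the only subtlety is to justify the range condition cleanly, but this is either a standard fact or a one-line argument from the definition of psd, so the whole proof is short.
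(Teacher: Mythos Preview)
Your argument is correct. The range criterion you invoke is exactly what is needed, and the algebra that extracts $z=c_r^{-1}y_1^r$ and then $y_1^{r,d}=k_r z$ is clean.

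The paper's proof reaches the same conclusion by a slightly different, more computational route: it conjugates $y$ by the invertible matrix
\[
p=\begin{pmatrix}1&0&0\\0&c_r&0\\0&k_r&I_{d-r-1}\end{pmatrix},
\]
and observes that $p^{-1}y(p^{-1})^T$ has its bottom-right $(d-r-1)\times(d-r-1)$ block equal to~$0$ while the $(1,3)$ block equals $(y_1^{r,d}-k_rc_r^{-1}y_1^r)^T$; positive semidefiniteness then forces that block to vanish. So the paper uses an explicit congruence to \emph{produce} a zero diagonal block and then applies the elementary fact ``zero on the diagonal of a psd matrix kills the row/column,'' whereas you apply the equivalent but more packaged range/Schur-complement criterion directly to the original $2\times2$ block partition. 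Your version avoids writing down $p^{-1}$ and the triple product; the paper's version avoids citing an external lemma. Both are one-paragraph arguments resting on the same underlying principle.
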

\begin{pf}
We set
\[
p=\left(
\begin{array}{c|c@{\quad}c}
1&0&0\\
\hline
0&c_r & 0\\
0&k_r & I_{d-r-1}
\end{array}
\right)
\quad
\mbox{and have}\quad
p^{-1}=\left(
\begin{array}{c|c@{\quad}c}
1&0&0\\
\hline
0&c^{-1}_r & 0\\
0&-k_rc^{-1}_r & I_{d-r-1}
\end{array}
\right).
\]
Since the matrix
\[
p^{-1}y(p^{-1})^T= \left(
\begin{array}{c|c@{\quad}c}
y_{1,1}& (c^{-1}_r y_1^r)^T & (y_1^{r,d}-k_r c_r^{-1} y_1^r)^T \\
\hline
c^{-1}_r y_1^r&I_r & 0\\
y_1^{r,d}-k_r c_r^{-1} y_1^r&0 & 0
\end{array}
\right)
\]
is positive semidefinite, we necessarily have $y_1^{r,d}-k_r c_r^{-1} y_1^r=0$.
\end{pf}

\section{\texorpdfstring{Proofs of Section \lowercase{\protect\ref{sec_high}}}{Proofs of Section 3}}\label{AppsecHigh}

\subsection{\texorpdfstring{Proof of Proposition \protect\ref{Flow}}{Proof of Proposition 14}}\label{AppFlow}

%
\begin{lemma}\label{Remarkable}
Let $(X_t^x)_{t \geq0}\underset{\mathit{Law}}{\sim} \WIS_d(x,\alpha,b,a)$ and
$v=v_R+iv_I$ such that $v_R \in\mathcal{D}_{b,a;t}$ and $v_I\in\symm
$. We denote by
$\phi(t,\alpha,x,v)$ the Laplace transform of $X_t^x$ given by (\ref
{CarWishart}), the other parameters
$a$, $b$ being fixed. Then, the derivative w.r.t. $x_{\{k,l\}}$
satisfies the equality
\[
\partial_{\{ k,l \}} \phi(t,\alpha,x,v) = \phi(t,\alpha+2,x,v)
p_t^{\{ k,l\}}(v),
\]
where $p_t^{\{ k,l\}}$ is a polynomial function of the
matrix elements of degree $d$ defined by
\begin{eqnarray*}
p_t^{\{ k,l\}}(v)&=& \Tr[v \adj(I_d-2q_tv)
m_t (e^{k,l}_d+ \mathbh{1}_{k \not= l}e^{l,k}_d) m_t^T]\\
&=&\!:\sum
_{\gamma\in\mathbb
N^{{d(d+1)}/{2}}, |\gamma|\leq d}a_t^{\gamma,{\{ k,l\}
}}\overline{v}^{\gamma},
\end{eqnarray*}
where
\[
\overline{v}^{\gamma}=
\prod_{\{i,j\}}v_{\{i,j\}}^{\gamma_{\{i,j\}}}.
\]

Moreover, its coefficients are bounded uniformly in time,
\[
\exists K_t>0, \forall s\in[0,t]\qquad \max_{\gamma\in\mathbb
N^{{d(d+1)}/{2}},|\gamma|\leq d}\bigl(\bigl|a_s^{\gamma,{\{ k,l
\}}}\bigr|\bigr)\leq K_t.
\]
\end{lemma}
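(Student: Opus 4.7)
The plan is to differentiate the closed formula \eqref{Car_Wishart} directly and recognize the right-hand side. Since the dependence of $\phi(t,\alpha,x,v)$ on $x$ occurs only inside the exponential (the determinantal prefactor is $x$-free), the chain rule gives
\[
\partial_{\set{k,l}} \phi(t,\alpha,x,v) = \phi(t,\alpha,x,v)\,\partial_{\set{k,l}}\Tr\!\left[v(I_d-2q_tv)^{-1} m_t x m_t^T\right].
\]
Writing $x = \sum_{1 \le i \le j \le d} x_{\set{i,j}}(e^{i,j}_d + \mathbb{1}_{i\ne j} e^{j,i}_d)$ and differentiating yields, by linearity of the trace,
\[
\partial_{\set{k,l}}\Tr\!\left[v(I_d-2q_tv)^{-1} m_t x m_t^T\right] = \Tr\!\left[v(I_d-2q_tv)^{-1} m_t (e^{k,l}_d + \mathbb{1}_{k\ne l} e^{l,k}_d) m_t^T\right].
\]

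Next I would use the classical identity $A^{-1} = \adj(A)/\det(A)$ for $A=I_d-2q_tv$, valid on $\mathcal{D}_{b,a;t}$ (and extended to complex $v=v_R+iv_I$ with $v_R\in\mathcal{D}_{b,a;t}$ via Lemma~\ref{det0}). This rewrites the derivative as
\[
\partial_{\set{k,l}}\phi(t,\alpha,x,v) = \frac{\phi(t,\alpha,x,v)}{\det(I_d-2q_tv)}\,\Tr\!\left[v\,\adj(I_d-2q_tv)\, m_t(e^{k,l}_d+\mathbb{1}_{k\ne l}e^{l,k}_d)m_t^T\right].
\]
Now the key observation: comparing \eqref{Car_Wishart} at parameters $\alpha$ and $\alpha+2$ gives $\phi(t,\alpha+2,x,v)=\phi(t,\alpha,x,v)/\det(I_d-2q_tv)$, since the exponential factor is unchanged and the determinant in the denominator picks up one extra power. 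Substituting this identity yields exactly the claimed formula $\partial_{\set{k,l}}\phi(t,\alpha,x,v) = \phi(t,\alpha+2,x,v)\,p_t^{\set{k,l}}(v)$.

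For the polynomial structure, each entry of $\adj(I_d-2q_tv)$ is a polynomial of degree at most $d-1$ in the entries of $q_tv$, hence of degree at most $d-1$ in the entries of $v$; multiplying by the extra factor of $v$ under the trace produces a polynomial $p_t^{\set{k,l}}(v)$ of total degree at most $d$ in the $v_{\set{i,j}}$. For the uniform bound on the coefficients, I would note that $a_s^{\gamma,\set{k,l}}$ is a polynomial function of the entries of $q_s$, $m_s$ (and $e^{k,l}_d+\mathbb{1}_{k\ne l}e^{l,k}_d$, which is constant). Since $s\mapsto m_s=\exp(sb)$ and $s\mapsto q_s=\int_0^s \exp(ub)a^Ta\exp(ub^T)du$ are continuous on $[0,t]$, their entries are bounded there, and so is any fixed polynomial function of them, which provides the constant $K_t$. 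The main (minor) technical point is simply to keep track of the two matrix identities ($\adj/\det$ and the $\alpha\to\alpha+2$ shift); no nontrivial obstacle arises.
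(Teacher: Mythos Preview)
Your proof is correct and follows essentially the same approach as the paper: differentiate \eqref{Car_Wishart} in $x$, rewrite the inverse via $\adj/\det$, and absorb the extra $\det(I_d-2q_tv)^{-1}$ into the shift $\alpha\to\alpha+2$; the uniform bound on the coefficients then comes from continuity of $s\mapsto m_s$ and $s\mapsto q_s$ on $[0,t]$, exactly as you argue.
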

\begin{pf}
We get from (\ref{CarWishart})
\begin{eqnarray*}
\partial_{\{ k,l \}}\phi(t,\alpha,x,v)&=&\frac{\Tr
[v \adj(I_d-2q_tv)
m_t(e^{k,l}_d+ \mathbh{1}_{k \not= l}e^{l,k}_d) m_t^T]}{\det
(I_d-2q_tv)}\\
&&{}\times\frac{\exp(\operatorname{Tr}
[v(I_d-2q_tv)^{-1}m_txm_t^T])}{\det(I_d-2q_tv)^{
{\alpha}/{2}}} \\
&=&\phi(t,\alpha+2,x,v)\operatorname{Tr}[v \operatorname
{adj}(I_d-2q_tv)
m_t(e^{k,l}_d+ \mathbh{1}_{k \not= l}e^{l,k}_d) m_t^T].
\end{eqnarray*}
Since $s \mapsto\|m_s\|$ and $s \mapsto\|q_s\|$ are continuous
functions on
$[0,t]$, we obtain the bounds on the polynomial coefficients.
\end{pf}
\begin{pf*}{Proof of Proposition~\ref{Flow}}
Let $f \in\Cpolde{\symm}$. First, let us observe that (\ref
{derflowform}) is obvious
when $l=|n|=0$. Since we have $\forall l\in\mathbb
N, L^lf \in\Cpolde{\symm}$, and
$\partial_t^l \tilde{u}(t,x)=\E(L^lf(X_t^x))$, it is sufficient to
prove (\ref{derflowform}) only for the derivatives
w.r.t.~$x$.

We first focus on the case $|n|=1$ and want to show that $ \partial
_{\{ k,l \}}
\tilde u (t,x)$ satisfies~(\ref{derflowform}).
The sketch of this proof is to write $f$ as the inverse Fourier
transform of
its Fourier transform and then use Lemma~\ref{Remarkable}.
Unfortunately, $f$
has not a priori the required integrability to do that, and we have to
introduce an auxiliary function $f_\rho$.

\textit{Definition of the new function $f_{\rho}$.}
Since $\mathcal{D}_{b,a;T}$ given by (\ref{DomLapWis}) is an open
set and
$0\in\mathcal{D}_{b,a;T}$, there is $\rho>0$ such that $\rho I_d
\in\mathcal{D}_{b,a;T}$.
Let
$\mu\dvtx\mathbb R \rightarrow\mathbb R $ be the function such that
$ \mu(x)=0 $ if $ x\leq-1$ or $x\geq0$, $\mu(x)=\exp(\frac
{1}{x(x+1)})$ if $-1<x<0$. We have $\mu\in\mathcal{C}^{\infty
}(\mathbb R)$.

Then we consider he cutoff function $\zeta\dvtx\mathbb R \rightarrow
\mathbb R \in\mathcal{C}^{\infty}(\mathbb R)$ defined as
$\forall x \in\mathbb R$, $\zeta(x)=
\frac{\int_{-\infty}^x\mu(y)\,dy}{\int_{\mathbb R}\mu(y)\,dy}$. It is
nondecreasing, such that $0\le\zeta(x)\le1$, $\zeta(x)=0$ if $x\le
-1$ and
$\zeta(x)=1$ if $x\ge0$. Besides, we have $\zeta\in\Cpolde{\R}$
since all
its derivatives have a compact support. Now, we define a $\vartheta
\in\Cpolde{\symm} $ as
\[
\vartheta\dvtx\symm\rightarrow\mathbb R,
\qquad x\mapsto\prod_{i=1}^d\zeta\bigl(x_{\{ i,i \}}\bigr) \prod
_{i \neq
j} \zeta\bigl(x_{\{ j,j \}}x_{\{ i,i \}}-x_{\{
i,j \}}^2\bigr).
\]
It is important to notice that $0 \le\vartheta\le1$, $\vartheta(x)
=1$ if
$x \in\posm$ and $\vartheta(x) =0$ if there is $i \in\{
1,\ldots,d\}$ such
that $x_{\{ i,i \}}<-1$ or $i<j \in\{1,\ldots
,d\}$ such that\vspace*{1pt} $x_{\{ i,j \}}^2>1+x_{\{ i,i
\}}x_{\{ i,i \}}$.
Let $\gamma\in\N^{d(d-1)/2}$. Since $f \in\Cpolde{\symm}$, there
are constants $K,E>0$ and $K',E'>0$ such that, $\forall x \in\symm$
\begin{eqnarray*}
|\partial^\gamma(\vartheta f)(x)|&\leq& K(1+\|x\|^E)\prod
_{i=1}^d \bigl(
1_{\{|x_{\{ i,i \}}|>-1\}}\bigr)\prod_{1\leq
i < j\leq
d}\bigl(\mathbh1_{\{ x_{\{ i,j \}}^2\leq1+
x_{\{ i,i \}}x_{\{ j,j \}}\}}\bigr) \\[-2pt]
&\leq& K'\bigl(1+\bigl\|\bigl(x_{\{ i,i \}}\bigr)_{1\leq i\leq d}\bigr\|
^{E_1}\bigr)\\[-2pt]
&&{}\times\prod
_{i=1}^d\bigl( 1_{\{|x_{\{ i,i \}}|>-1\}
}\bigr)\prod
_{1\leq i < j\leq d}\bigl(\mathbh1_{\{ x_{\{ i,j \}
}^2\leq1+
x_{\{ i,i \}}x_{\{ j,j \}}\}}\bigr).
\end{eqnarray*}
Here, the upper bound only involves the diagonal coefficients. We define
\[
x \in\symm,\qquad f_{\rho}(x):= \vartheta(x)
{f}(x)\exp(-\operatorname{Tr}(\rho x))
\]
and obtain from the last inequality that $f_\rho$ belongs to the Schwartz
space of rapidly decreasing functions since $\rho>0$. Thus, its
Fourier transform also belongs
to the Schwartz space and we have
\[
f_\rho(x)=\frac{1}{(2\pi)^{{d(d+1)}/{2}}}\int_{\mathbb
R^{{d(d+1)}/{2}}}\exp(-\Tr(ivx))\mathcal{F}({f}_{\rho})(v)\,dv,
\]
where
\[
\mathcal{F}({f}_{\rho})(v)= \int_{\mathbb
R^{{d(d+1)}/{2}}}\exp(\Tr(ivx)) f_\rho(x)\,dx
\]
and, in particular, $f_\rho, \mathcal{F}(f_\rho) \in L^1(\symm)
\cap L^\infty(\symm)$.

\textit{A new representation of $\tilde{u}(t,x)$.}
We have $f(x)=\exp( \rho\Tr( x)) f_\rho(x)$ for $x \in\posm$, and
therefore
\begin{eqnarray*}
\tilde u(t,x) &=&\mathbb E[\exp(\Tr(\rho X_t^{x})){f}_{\rho
}(X_t^{x})]\\[-2pt]
&=& \frac{1}{(2\pi)^{{d(d+1)}/{2}}}\mathbb E \biggl[\int
_{\mathbb
R^{{d(d+1)}/{2}}}\exp\bigl(\operatorname{Tr}[(-iv+\rho I_d)X_t^{x}]\bigr)
\mathcal{F}({f}_{\rho})(v)\,dv
\biggr]\\[-2pt]
&=&\frac{1}{(2\pi)^{{d(d+1)}/{2}}}\int_{\mathbb
R^{{d(d+1)}/{2}}}\mathbb
E\bigl[\exp\bigl(\Tr[(-iv+\rho I_d)X_t^{x}]\bigr)\bigr]
\mathcal{F}({f}_{\rho})(v)\,dv.
\end{eqnarray*}
The last equality holds since
\begin{eqnarray*}
&&
\int_{\mathbb R^{{d(d+1)}/{2}}}
\bigl| \E\bigl[\exp\bigl(\Tr[(-iv+\rho I_d)X_t^x]\bigr)\bigr]\bigr|
|\mathcal{F}({f}_{\rho
})(v)|\,dv \\[-2pt]
&&\qquad\le
\phi(t,\alpha,x,\rho I_d)\|\mathcal{F}({f}_{\rho})\|_{1} <\infty.
\end{eqnarray*}
Here we
have used that $\rho I_d \in\mathcal{D}_{b,a;T}$ to get
$\phi(t,\alpha,x,\rho I_d)< \infty$.

\textit{Derivation with respect to $x_{\{ k,l\}}$, $ k,l
\in\{1,\ldots,d \}$.}
From Lemma~\ref{Remarkable}, we have by Lebesgue's theorem
%
\begin{eqnarray}\label{derutil}
\partial_{\{ k,l \}} \tilde u (t,x)&=& \frac{1}{(2\pi)^{{d(d+1)}/{2}}}\int_{\mathbb
R^{{d(d+1)}/{2}}}\phi(t,\alpha+2,x,-iv+\rho I_d)\nonumber\\[-4pt]\\[-12pt]
&&\hspace*{132pt}{}\times p_t^{\{ k,l\}}(\rho I_d-iv) \mathcal{F}({f}_{\rho})(v)\,dv\nonumber
\end{eqnarray}
since
$
| \partial_{\{ k,l \}}^x\phi(t,\alpha,x,-iv+\rho I_d)
\mathcal
{F}({f}_{\rho})(v)|
\leq|\phi(t,\alpha+2,x,\rho I_d)||p_t^{\{ k,l \}}(\rho
I_d-\break iv) \mathcal{F}({f}_{\rho})(v)|
$ and $p_t^{\{ k,l \}}(\rho I_d- iv) \mathcal{F}({f}_{\rho
})(v)$ is
a rapidly decreasing function.

Let $1 \le k',l' \le d$. An integration by part gives
$ \int_{\mathbb R} (\rho I_d-iv)_{\{ k',l' \}} \exp
(\Tr[x(iv-\rho
I_d)] ) \vartheta(x) f(x) \,dx_{\{ k',l' \}} =(\frac
{\mathbh1_{k'
\neq l'}}{2} + \mathbh1_{k'=l'})\int_{\mathbb R} \exp{( \Tr
[x(iv-\rho
I_d)])}\partial_{\{ k',l' \}}(\vartheta
(x)\*f(x))\,dx_{\{ k',l' \}}, $
and thus
\begin{eqnarray*}
&&(\rho I_d-iv)_{\{ k',l' \}}\mathcal{F}(\exp[-\rho
\Tr(x)]\vartheta(x)f(x))(v) \\
&&\qquad= \biggl(\frac{\mathbh1_{k' \neq l'}}{2} +
\mathbh
1_{k'=l'}\biggr)
\mathcal{F}\bigl(\exp[-\rho\Tr(x)]\partial_{\{ k',l'
\}}[\vartheta(x)f(x)]\bigr)(v).
\end{eqnarray*}
We set $\varphi(\gamma)=\prod_{1\leq k' \leq l' \leq
d}(\frac{\mathbh1_{k'\neq l'}}{2} + \mathbh1_{k'=l'})^{\gamma_{\{
k',l'\}}} $
for $\gamma\in\N^{d(d+1)/2}$ and get by iterating the argument that
%
\begin{eqnarray}\label{condl1}
&&
\prod_{1\leq
k'\leq l'\leq d}(\rho I_d-iv)_{\{ k',l' \}}^{\gamma
_{\{ k',l'
\}}} \mathcal{F}({f}_{\rho})(v) \nonumber\\[-8pt]\\[-8pt]
&&\qquad= \varphi(\gamma)
\mathcal{F}
\bigl(\exp[- \rho\Tr(x )] \partial_\gamma(\vartheta
\times f)
(x) \bigr)(v).\nonumber
\end{eqnarray}
Since $p_t^{\{ k,l\}}(\rho
I_d-iv)=\sum_{\gamma\in\mathbb N^{{d(d+1)}/{2}}, |\gamma|\leq
d}a_t^{\gamma,{\{ k,l\}}} \prod_{1\leq
k'\leq l'\leq d}(\rho I_d-iv)_{\{ k',l' \}}^{\gamma
_{\{ k',l'
\}}}$, we get from (\ref{derutil}) and (\ref{condl1})
%
\begin{eqnarray}\label{derut}
\partial_{\{ k,l \}}u(t,x) &=& \sum_{|\gamma|\leq
d}a_t^{\gamma,{\{
k,l\}
}} \varphi(\gamma)
\mathbb E\bigl(\partial_\gamma{(f \times
\vartheta)}(Y_t^{x})\bigr)\nonumber\\[-8pt]\\[-8pt]
&=& \sum_{|\gamma|\leq
d}a_t^{\gamma,{\{ k,l\}}} \varphi(\gamma) \E
(\partial_\gamma f(Y_t^{x})),\nonumber
\end{eqnarray}
where $(Y_t^{x})_{t \geq0}
\underset{\mathrm{Law}}{\sim}\WIS_d(x,\alpha+2,b,a)$. Here we have used
that $\partial_\gamma(\vartheta\times f)(y)=\partial_\gamma f(y)$ for
$y \in\posm$. From\vspace*{2pt} Lemma~\ref{Remarkable} $(a^{\gamma,\{
k,l\}}_t)_{\gamma\in\mathbb N^{{d(d+1)}/{2}},|\gamma|\leq d}$ is
bounded for $t\in[0,T]$, and we get (\ref{derflowform}) when $|n|=1$
since $\partial_\gamma f \in \Cpolde{\symm}$. Thanks to (\ref{derut}),
a derivative of order $|n|$, can be seen as a (bounded) linear
combination of derivatives of order $|n|-1$, and we easily get
(\ref{derflowform}) by an induction on $|n|$.

It remains to check that we have indeed $\partial_t\tilde
{u}(t,x)=Lu(t,x)$. Let $t,h>0$. By the Markov
property, we have $\tilde{u}(t+h,x)=\E[\tilde{u}(t,X^x_h)]$.
From (\ref{derflowform}) and It\^{o}'s formula, we get
$[\tilde{u}(t+h,x)-u(t,x)]/h \underset{h \rightarrow0^+}
{\rightarrow}
Lu(t,x)$.
\end{pf*}
%
\begin{lemma}\label{MBCAFFINEODE}
Let $\alpha,x\in\posm$, $B \in\mathcal{L}(\posm)$ that
satisfies (\ref{ReQuiredAssumption}), and $x(t)$ be the solution of
the ODE
%
\begin{equation}\label{ODEMom}
x(t) = x + \int_0^t\bigl(\alpha+ B(x(s))\bigr)\,ds.
\end{equation}
Then we have $x(t) \in\posm$ for $t\ge0$.
\end{lemma}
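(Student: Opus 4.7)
The plan is to use a standard perturbation argument that exploits condition~\eqref{ReQuiredAssumption} on the boundary of $\posm$. The ODE has a unique global solution because $B$ is linear (hence Lipschitz) on the finite-dimensional space $\symm$ and $\alpha$ is constant, and the solution depends continuously on both the initial condition and the forcing term. I will first prove the statement under a strict-positivity perturbation and then pass to the limit.

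Fix $\varepsilon > 0$ and let $x_\varepsilon(t)$ solve
\begin{equation*}
x_\varepsilon(t) = (x + \varepsilon I_d) + \int_0^t \bigl(\alpha + \varepsilon I_d + B(x_\varepsilon(s))\bigr)\, ds,
\end{equation*}
so that $x_\varepsilon(0) \in \dpos$. I claim $x_\varepsilon(t) \in \dpos$ for every $t \ge 0$. Suppose not, and let $t_0 = \inf\{t > 0 : x_\varepsilon(t) \notin \dpos\} < \infty$. By continuity, $x_\varepsilon(t_0) \in \posm \setminus \dpos$, so there exists a unit vector $u \in \R^d$ with $u^T x_\varepsilon(t_0) u = 0$. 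Consider $g(t) = u^T x_\varepsilon(t) u = \Tr(x_\varepsilon(t)\, uu^T)$: we have $g(t) > 0$ for $t < t_0$ close to $t_0$ (since $x_\varepsilon$ was positive definite there) and $g(t_0) = 0$, hence $g'(t_0) \le 0$.

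On the other hand, since $uu^T \in \posm$ and $\Tr(x_\varepsilon(t_0)\, uu^T) = 0$, assumption~\eqref{ReQuiredAssumption} yields $\Tr\bigl(B(x_\varepsilon(t_0))\, uu^T\bigr) \ge 0$, and therefore
\begin{equation*}
g'(t_0) = u^T\bigl(\alpha + \varepsilon I_d + B(x_\varepsilon(t_0))\bigr) u \;\ge\; \varepsilon \|u\|^2 \;=\; \varepsilon \;>\; 0,
\end{equation*}
a contradiction. Hence $x_\varepsilon(t) \in \dpos \subset \posm$ for all $t \ge 0$. Finally, letting $\varepsilon \to 0$ and using continuous dependence of ODE solutions on parameters, $x_\varepsilon(t) \to x(t)$ uniformly on compact time intervals, and since $\posm$ is closed we conclude $x(t) \in \posm$.

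The only non-routine step is the boundary analysis: one must argue both that such a first exit time $t_0$ gives a direction $u$ in the kernel of $x_\varepsilon(t_0)$, and that condition~\eqref{ReQuiredAssumption} applied with $x_1 = x_\varepsilon(t_0)$ and $x_2 = uu^T$ produces the right sign. The strict perturbation $\varepsilon I_d$ is what makes this contradiction sharp; without it, one would only get $g'(t_0) \ge 0$, which is insufficient to rule out $g$ becoming negative immediately afterward.
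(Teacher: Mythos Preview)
Your proof is correct and follows essentially the same approach as the paper's: perturb to strict positivity, locate the first exit time from the cone, use condition~\eqref{ReQuiredAssumption} at that boundary point to force a strictly positive derivative, obtain a contradiction, and then pass to the limit by continuous dependence. The only cosmetic differences are that the paper assumes $\alpha,x\in\dpos$ directly (rather than adding $\varepsilon I_d$) and phrases the contradiction as $\Tr(y\,x(\tau-\epsilon))<0$ for some $y\in\posm$ rather than via the sign of $g'(t_0)$; the underlying argument is identical.
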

\begin{pf}
The ODE (\ref{ODEMom}) is affine and has unique solution on $\posm$
which is given by
%
\begin{equation}\label{ODEexplicitesol}
t \ge0,\qquad x(t)=\exp(tB)(x)+\int_0^t\exp(s B)(\alpha
)\,ds,
\end{equation}
where\vspace*{-2pt} $\forall t \in\mathbb R^+, \forall x \in\symm, \exp
(tB)(x)=\sum_{k=0}^{\infty}\frac{t^k B^k(x)}{k!}$, $B^k(x) =
\underbrace{B \circ\cdots\circ B}_{k\ \mathrm{times}}(x)$ such that
$B^0(x)=x$.

We first assume that $\alpha,x \in\dpos$
and consider $\tau= \inf\{ t \geq0, x(t) \notin\posm\}$,
with the convention $\inf\varnothing=+\infty$. We have $\tau> 0$.
Let us
assume by a way of contradiction that $\tau< \infty$. Then $x(\tau)$
cannot be invertible and there is $y \in\posm$ such that $y \not= 0$ and
$\Tr(yx(\tau))=0$. From (\ref{ODEexplicitesol})
and (\ref{ReQuiredAssumption}), we get
\[
\Tr(x'(\tau)y) =\Tr\bigl(
[B(x(\tau))+\alpha] y \bigr)>0,
\]
since $\alpha$ is positive definite. Therefore, there is
$\epsilon\in(0,\tau)$ such that $\Tr(y x(\tau-\epsilon))<0$. Let
us now recall
that $z \in\posm\iff\forall y \in\posm, \Tr(yz) \ge0$. Thus,
$x(\tau-\epsilon) \notin\posm$, which contradicts the definition
of $\tau$.

In the general case $\alpha,x \in\posm$, we observe that the
solution (\ref{ODEexplicitesol}) is continuous w.r.t. $x$ and
$\alpha$, and
thus $\forall t \ge0, x(t) \in\posm$ since $\posm$ is a closed set.
\end{pf}

\subsection{\texorpdfstring{Proof of Proposition \protect\ref{propweaknu}}{Proof of Proposition 17}}
\label{Apppropweaknu}
First, let us check that $\theta_t \in\nsing$ is well defined, such
that $q_t/t=\theta_t I^n_d \theta_t^T$ and satisfies
%
\begin{equation}\label{boundtheta}
\exists K, \eta>0, \forall t \in(0,\eta)\qquad \max(\|\theta_t \|,\|
\theta_t \|^{-1})
\le K.
\end{equation}
When $n=d$, $q_t/t$ is definite positive as a convex combination of definite
positive matrices and the usual Cholesky decomposition is well defined.
Moreover, (\ref{boundtheta}) holds since $q_t/t$ goes to $a^Ta$
which is invertible when $t\rightarrow0^+$. When
$n<d$, we have assumed, in addition, that $b$ and $a^Ta$ commute. Therefore,
$q_t=a^Ta (\int_0^t \exp(sb) \exp(sb^T)\,ds/t)$. Since $a^T a$ and
$(\int_0^t
\exp(sb) \exp(sb^T)\,ds/t)$ are positive semidefinite matrices that
commute, we
have
\[
q_t=\sqrt{\frac{1}{t}\int_0^t \exp(sb) \exp(sb^T) \,ds }\,a^Ta
\sqrt{\frac{1}{t}\int_0^t \exp(sb) \exp(sb^T) \,ds }.
\]
Once again, $\frac{1}{t}\int_0^t \exp(sb) \exp(sb^T) \,ds$ is
definite positive as a convex combination of definite
positive matrices and we get that $\theta_t\!=\!\sqrt{\frac{1}{t}\int
_0^t \!\exp(sb) \exp(sb^T) \,ds } \*p^{-1}
\bigl({c_n \atop k_n}\enskip{0 \atop I_{d-n}}\bigr)
\in\nsing$ satisfies $q_t/t=\theta_t I^n_d \theta_t^T$
by Lemma~\ref{OuterProdDec}. Similarly, (\ref{boundtheta}) holds
since $ p^{-1} \bigl({c_n \atop k_n}\enskip{0 \atop I_{d-n}}\bigr)$
does not depend on $t$ and $\sqrt{\frac{1}{t}\int_0^t
\exp(sb) \exp(sb^T) \,ds }$ goes to $I_d$ when $t\rightarrow0^+$.

Let $f \in\Cpolde{\posm}$. Let $X^x_t \sim \WIS_d(x,\alpha,b,a;t)$.
Since the
exact scheme is a potential $\nu$th-order scheme, there are constants
$C,E,\eta>0$ depending only on a good sequence of $f$ such that
%
\begin{equation}\label{Propschnustep1}
\forall t \in(0, \eta)\qquad \Biggl|\E[f(X^x_t)]-\sum
_{k=0}^{\nu} \frac{t^k}{k!}
L^k f(x)\Biggr|\le C t^{\nu+1}(1+\|x\|^E).
\end{equation}
On the other hand, we have from Proposition~\ref{Propidloiwis},
%
\begin{eqnarray}\label{Propschnustep2}
&&\E[f(\hat{X}^x_t)]-\E[f(X^x_t)]\nonumber\\[-8pt]\\[-8pt]
&&\qquad=\E\bigl[f\bigl(\theta_t \hat
{Y}^{\theta_t^{-1}
m_txm_t^T(\theta_t^{-1})^T}_t \theta_t^T\bigr)\bigr]-\E\bigl[f\bigl(\theta_t Y^{\theta_t^{-1}
m_txm_t^T(\theta_t^{-1})^T}_t \theta_t^T\bigr)\bigr].\nonumber
\end{eqnarray}
Let us introduce
$f_{\theta_t}(y):= f( \theta_t y \theta_t^T) \in\Cpolde{\posm}$.
By the chain
rule, we have $\partial_{\{i,j\}}f_{\theta_t}(y)=\Tr[ \theta_t (e^{i,j}_d
+\mathbh{1}_{i \not= j} e^{j,i}_d)
\theta_t^T \partial f (\theta_t y \theta_t^T)]$, where $(\partial f
(x))_{k,l}=\break(\mathbh{1}_{k=l} + \frac{1}{2} \mathbh{1}_{k \not
=l})\partial_{\{k,l\}} f (x)$ and
$e^{i,j}_d=(\mathbh{1}_{k=i,l=j})_{1\le k,l\le d}$. From (\ref
{boundtheta}), we see that
there is a good sequence $(C_\gamma,e_\gamma)_{\gamma\in\N
^{d(d+1)/2}}$ that
can be obtained from a good sequence of $f$ such that
\[
\forall t \in(0, \eta), \forall y \in\posm\qquad |\partial_\gamma
f_{\theta_t}(y)|\le C_\gamma
(1+ \|y \|^{e_\gamma}).
\]
Therefore, we get that there are constants still denoted by $C,E,\eta
>0$ such
that
%
\begin{eqnarray}\label{Propschnustep3}
&&
\forall t \in(0, \eta)\nonumber\\
&&\qquad\bigl| \E\bigl[f\bigl(\theta_t \hat {Y}^{\theta_t^{-1}
m_txm_t^T(\theta_t^{-1})^T}_t \theta_t^T\bigr)\bigr]-\E\bigl[f\bigl(\theta_t
Y^{\theta_t^{-1} m_txm_t^T(\theta_t^{-1})^T}_t \theta_t^T\bigr)\bigr]\bigr|\\
&&\qquad\qquad\le C
t^{\nu+1} \bigl(1+ \|\theta_t^{-1} m_txm_t^T(\theta_t^{-1})^T
\|^E\bigr).\nonumber
\end{eqnarray}
From (\ref{boundtheta}), we get that there is a constant $K'>0$ such
that $\|\theta_t^{-1}
m_tx\*m_t^T(\theta_t^{-1})^T \|^E \le K'\|x \|^E$ for $t \in(0, \eta)$.
Thus, we get the result by gathering (\ref{Propschnustep1}),
(\ref{Propschnustep2}) and (\ref{Propschnustep3}).

\subsection{\texorpdfstring{Proof of Proposition \protect\ref{squareOU}}{Proof of Proposition 21}}
\label{AppsquareOU}

We have, by using It\^{o} calculus, $dX^x_t=(c+W_tI^n_d)^T\,dW_tI^n_d+ I^n_d
\,dW_t^T(c+W_tI^n_d)+dI^n_d \,dt$. By using Lemma~\ref{Invariantvol}, the
quadratic covariation of $(X^x_t)_{i,j}$ and $(X^x_t)_{m,n}$ is given by
$d\langle(X^x_t)_{i,j},(X^x_t)_{m,n} \rangle=(X^x_t)_{i,m}
(I^n_d)_{j,n}+(X^x_t)_{i,n} (I^n_d)_{j,m}+(X^x_t)_{j,m}
(I^n_d)_{i,n}+(X^x_t)_{j,n} (I^n_d)_{i,m}$. Therefore, $(X^x_t)_{t \ge0}$
solves the same martingale problem as $\WIS_d(x,d,0,I^n_d)$, which is
known to
have a unique solution from Cuchiero et al.~\cite{Teichmann}.

Let us now show that $\hat{X}^x_t$ is a potential second-order scheme.
We can
see $c+\sqrt{t}\hat{G}I^n_d$ as the Ninomiya--Victoir scheme with
moment-matching variables (see~\cite{Alfonsi}, Theorem 1.18)
associated to
$\frac{1}{2}\sum_{i=1}^d \sum_{j=1}^n \partial_{i,j}^2$ on
$\mathcal{M}_d(\R)$. Let $f \in\Cpolde{\posm}$. Then, $x
\in\mathcal{M}_d(\R)
\mapsto f(x^Tx) \in\Cpolde{\mathcal{M}_d(\R)}$ and there are constants
$C,E,\eta>0$ depending only on a good sequence of $f$ such that
\begin{eqnarray*}
&&
\forall t \in(0, \eta)\\
&&\qquad
\bigl|\E\bigl[f\bigl(\bigl(c+\sqrt{t}\hat{G}I^n_d\bigr)^T\bigl(c+\sqrt{t}\hat{G}I^n_d\bigr)\bigr)\bigr]
-\E
\bigl[f\bigl((c+W_tI^n_d)^T(c+W_tI^n_d)\bigr)\bigr]\bigr|\\
&&\qquad\qquad\le
C t^{\nu+1} (1+\|c\|^E).
\end{eqnarray*}
Let us now observe that the Frobenius norm of $c$ is $\sqrt{\Tr(c^T
c)}=\sqrt{\Tr(x)}\le\sqrt{d+\Tr(x^2)} \le\sqrt{d}+\sqrt{\Tr
(x^2)} $. Therefore, for any norm, there is a constant $K>0$ such that
$\|c \| \le
K(1+ \|x\|)$, which gives the result.
\end{appendix}



\printaddresses

\end{document}